\newtheorem{theorem}{Theorem}[section]
\newtheorem{corollary}{Corollary}[theorem]
\newtheorem{lemma}[theorem]{Lemma}
\newtheorem{prop}[theorem]{Proposition}
\theoremstyle{definition}
\theoremstyle{remark}
\newtheorem*{remark}{Remark}
\newtheorem{conjecture}[theorem]{Conjecture}
\newtheorem{assumption}[theorem]{Assumption}
\newcommand{\gal}{\text{Gal}}
\newcommand{\frob}{\text{Frob}}
\DeclareFontFamily{U}{wncy}{}
\DeclareFontShape{U}{wncy}{m}{n}{<->wncyr10}{}
\DeclareSymbolFont{mcy}{U}{wncy}{m}{n}
\DeclareMathSymbol{\Sh}{\mathord}{mcy}{"58} 
\title{Factorization of measures and applications to the weak Goldfeld conjecture}
\author{Merrick Cai}
\date{August 2021}
\begin{document}

\maketitle
\begin{abstract}
Extending Gross's result, we prove that a certain factorizaton of measures holds for all $p$ and any finite even Dirichlet character $\chi$ of any conductor, rather than only for split $p$ and $\chi$ with conductor a power of $p$. Using this generalization, we find lower bounds on the proportion of imaginary quadratic fields $K$ for which (under certain assumptions on the elliptic curve) a chosen quadratic twist of an elliptic curve $E$ over $K$ has rank $1$. We also find lower and upper bounds for the proportion of quadratic twists with rank $1$ when we vary $D$, the factor we twist by, under the assumption that $\omega$ (the prime factor counting function) is sufficiently close to a Gaussian distribution, as described by Erd\"os-Kac. We apply similar methods to cubic twists, and then derive analogous lower bounds for the proportion of imaginary quadratic fields for which a sextic twist has rank $1$. Lastly, for elliptic curves over $\mathbb{Q}$ satisfying certain assumptions, we find positive lower bounds on the proportion of quadratic twists (over $\mathbb{Q}$) which have rank $0$ and rank $1$, which yields examples of elliptic curves satisfying the weak Goldfeld conjecture.
\end{abstract}
\tableofcontents
\section{Introduction}

\subsection{Algebraic and analytic rank}

Let $E$ be an elliptic curve over $\mathbb{Q}$. The $\mathbb{Q}$-points of $E$ form an abelian group $E(\mathbb{Q}$) called the Mordell-Weil group. Mordell's theorem states that $E(\mathbb{Q})$ is finitely generated, and thus the rank of $E(\mathbb{Q})$ is a well-defined, nonnegative integer. We call the rank of $E(\mathbb{Q})$ the algebraic rank of $E$, and denote it as $r_{alg}(E)$.

However, the algebraic rank is rather difficult to handle. Instead, we may attach the following $L$-function to the elliptic curve $E/\mathbb{Q}$: $$L(E/\mathbb{Q},s)=\prod_{p}L_p(E/\mathbb{Q},s),$$ where $$L_p(E/\mathbb{Q},s)=
\begin{cases}
\left(1-a_p\cdot p^{-s}+p^{1-2s}\right)^{-1}& p\text{ has good reduction},\\
\left(1\pm a_p\cdot p^{-s}\right)^{-1}& p\text{ has multiplicative reduction},\\
1 & p\text{ has additive reduction},
\end{cases}$$ and $a_p=p+1-|E(\mathbb{F}_p)|$ is the trace of the Frobenius element associated to $p$. (In the multiplicative reduction case, the type of reduction determines the sign of the plus/minus.) This $L$-function satisfies a functional equation relating its values at $s$ and $2-s$, and thus its order of vanishing at $s=1$ is of interest. We call the order of vanishing of $L(E/\mathbb{Q},s)$ at $s=1$ the analytic rank of $E$, and denote it as $r_{an}(E)$.

Although the notions of analytic and algebraic rank may seem unrelated, they are not. The famous Birch and Swinnerton-Dyer conjecture \cite{birch1965notes} posits that they are in fact equal.

\begin{conjecture}[Birch and Swinnerton-Dyer]
The algebraic rank is the same as the analytic rank: $r_{alg}(E)=r_{an}(E)$.
\end{conjecture}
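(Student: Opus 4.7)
\textbf{I must note at the outset that the displayed statement is the Birch and Swinnerton-Dyer conjecture, a Clay Millennium Problem that remains open in full generality; the excerpt presents it as motivation, not as a theorem the paper proves.} I therefore cannot propose a genuine proof, but I can describe the strategy behind the strongest known partial case (analytic rank at most $1$), which is the natural first line of attack.

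The plan is to stratify by analytic rank. After invoking modularity (Wiles, Taylor--Wiles, and Breuil--Conrad--Diamond--Taylor) to realize $L(E/\mathbb{Q},s) = L(f,s)$ for the associated weight-$2$ newform $f$, I would select an auxiliary imaginary quadratic field $K$ satisfying the Heegner hypothesis for the conductor of $E$, so that Heegner points $y_K \in E(K)$ are available. If $r_{an}(E) = 0$, I would choose $K$ so that the twisted $L$-value $L(E^K/\mathbb{Q},1)$ is also nonzero (possible by nonvanishing results of Bump--Friedberg--Hoffstein and Murty--Murty), then apply Kolyvagin's Euler system of Heegner points to bound $\mathrm{Sel}_p(E/K)$ for almost all primes $p$, forcing $r_{\mathrm{alg}}(E/K) = 0$, and finally descend to $\mathbb{Q}$. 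If $r_{an}(E) = 1$, the Gross--Zagier formula equates the N\'eron--Tate height of $y_K$ to a nonzero multiple of $L'(E/K,1)$, producing a point of infinite order, while Kolyvagin's machinery again caps the Selmer rank to match.

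For $r_{an}(E) \geq 2$ there is no known strategy, and this is unambiguously the main obstacle. The Heegner point $y_K$ becomes torsion precisely when the analytic rank is at least $2$, so the Euler system degenerates and yields no useful information about rational points. Overcoming this would require either a new Euler system sensitive to higher-order vanishing, a refinement of Iwasawa-theoretic main conjectures in the spirit of Skinner--Urban powerful enough to convert $p$-adic $L$-function data into archimedean information in higher rank, or some qualitatively new bridge between special $L$-values and rational points. None of the tools developed in the present paper (factorizations of $p$-adic measures and twisting arguments controlling rank distributions in families) are aimed at this obstruction; they instead give lower bounds on \emph{how often} small ranks occur, which is the direction this paper actually pursues.
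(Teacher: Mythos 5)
You are right that this statement is the Birch--Swinnerton-Dyer conjecture itself, which the paper states only as motivation and does not (and could not) prove; the paper merely cites the known partial result that $r_{an}(E)\in\{0,1\}$ implies $r_{an}(E)=r_{alg}(E)$, which is exactly the Gross--Zagier/Kolyvagin circle of ideas you outline. Your assessment and sketch of the state of the art are accurate, and no proof is expected here.
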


The BSD conjecture is still wide open, although significant advances have been made. Some of the strongest known results are due to \cite{taylor1995ring}, \cite{wiles1995modular}, \cite{breuil2001modularity}, \cite{gross1986heegner}, \cite{kolyvagin1989finiteness}, and \cite{kolyvagin2007euler}, and they relate the algebraic and analytic ranks in low rank cases.
\begin{theorem}
If $r_{an}(E)\in\{0,1\}$, then $r_{an}(E)=r_{alg}(E)$.
\end{theorem}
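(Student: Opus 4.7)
The plan is to combine three inputs: the modularity theorem, the Gross--Zagier formula for heights of Heegner points, and Kolyvagin's Euler system method. By the cited work of Wiles, Taylor--Wiles, and Breuil--Conrad--Diamond--Taylor, every elliptic curve $E/\mathbb{Q}$ is modular, so $L(E/\mathbb{Q},s)=L(f,s)$ for a weight-two newform $f$ of level $N$, and we obtain a modular parametrization $\phi\colon X_0(N)\to E$. This modularity is what makes the analytic object $L(E,s)$ accessible to the arithmetic machinery of Heegner points and cohomological bounds on Selmer groups.

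The next step is to pick an auxiliary imaginary quadratic field $K$ so that all primes dividing $N$ split in $K$ (the Heegner hypothesis) and so that the quadratic twist $L$-value $L(E^K,1)$ is nonzero; the existence of such $K$ for any prescribed sign of the functional equation follows from the nonvanishing results of Bump--Friedberg--Hoffstein and Waldspurger. Using $\phi$ and the CM points on $X_0(N)$, one constructs the Heegner point $y_K\in E(K)$. Factoring $L(E/K,s)=L(E,s)L(E^K,s)$ and applying the Gross--Zagier formula shows that the canonical height $\widehat{h}(y_K)$ is a nonzero multiple of $L'(E/K,1)=L'(E,1)L(E^K,1)+L(E,1)L'(E^K,1)$.

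Now I would split by the value of $r_{an}(E)$. If $r_{an}(E)=0$, then $L(E,1)\neq 0$, and by the same Heegner-point construction applied to $K$ chosen so that $L'(E/K,1)\neq 0$, Kolyvagin's Euler system (using derived cohomology classes from Heegner points at levels of products of auxiliary Kolyvagin primes) bounds the $p$-Selmer group of $E/\mathbb{Q}$ by the index of $y_K$, forcing $E(\mathbb{Q})$ to be finite, hence $r_{alg}(E)=0$. If $r_{an}(E)=1$, then $L(E,1)=0$ and $L'(E,1)\neq 0$, so the Gross--Zagier identity gives $\widehat{h}(y_K)\neq 0$, whence $y_K$ has infinite order. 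Because the sign of the functional equation for $E^K$ is opposite to that for $E$, the trace of $y_K$ lands (up to torsion) in $E(\mathbb{Q})$ and is of infinite order, giving $r_{alg}(E)\ge 1$. Kolyvagin's Euler system then shows the $\mathbb{Q}$-Selmer group has rank exactly one, so $r_{alg}(E)=1$.

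The main obstacle is the Euler system step: constructing the Kolyvagin cohomology classes $c_n\in H^1(K,E[p^k])$ indexed by squarefree products of Kolyvagin primes, verifying the local conditions (ramified only at primes dividing $n$, satisfying the finite/singular compatibility), and running the descent that converts the nonvanishing of the bottom class into an upper bound on the Selmer rank. Modularity itself is a deep input we must quote, and the nonvanishing theorem for the twisted $L$-value is a nontrivial analytic fact; but once these are in hand, the remaining work is the Euler system bookkeeping that matches the Gross--Zagier lower bound with a cohomological upper bound of the same size.
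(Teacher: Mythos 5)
Your outline is the standard Gross--Zagier/Kolyvagin argument (modularity, choice of an auxiliary Heegner field with the appropriate nonvanishing of the twisted $L$-value or its derivative, then the Euler-system bound on the Selmer group), which is precisely what the paper does: it does not prove the statement but quotes it, citing exactly the works of Wiles, Taylor--Wiles, Breuil--Conrad--Diamond--Taylor, Gross--Zagier, and Kolyvagin on which your sketch rests. So your proposal is correct in approach and matches the paper's (cited) proof.
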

However, it's still unproven as to whether $r_{alg}(E)\in\{0,1\}$ implies that $r_{alg}(E)=r_{an}(E)$.

\subsection{Goldfeld's conjecture}

Elliptic curves can be ordered by a property called height. This property is useful when studying statistics of elliptic curves, since it allows us to formalize the notion of an average: to measure the average of a quantity over all elliptic curves, we can calculate the average over the finitely many elliptic curves with height at most $X$, and then take a limit as $X\to \infty$. The analytic rank of an elliptic curve is one particularly important property that can be studied in this way. Originating from \cite{goldfeld1979conjectures} and \cite{katz1999random}, it is widely believed that among all elliptic curves over $\mathbb{Q}$, the elliptic curves with analytic rank $0$ or $1$ should each have density $50\%$, while elliptic curves with analytic rank greater than $1$ should have density $0$. Recent developments by \cite{bhargava2015ternary}, \cite{bhargava2014majority}, \cite{bhargava2013average}, and others have placed increasingly tighter bounds on the average, putting it closer and closer to the conjectured value of $0.5$; for example, the average rank is bounded below by $0.2068$ and bounded above by $0.885$.

Understanding the average rank over all elliptic curves is rather difficult. We can instead look at one particular family of elliptic curves: the quadratic twists $E_D$ of a fixed elliptic curve $E$. In \cite{goldfeld1979conjectures}, Goldfeld postulated that the average rank of a family of quadratic twists should behave in the same way as the set of elliptic curves over $\mathbb{Q}$.
\begin{conjecture}[Goldfeld]
Let $E$ be an elliptic curve and let $\{E_D\}$ be the family of quadratic twists of $E$ as $D$ varies over the set of fundamental domains. Then $50\%$ of the $E_D$ have analytic rank $0$, $50\%$ have analytic rank $1$, and $0\%$ have analytic rank greater than $1$.
\label{goldfeld}
\end{conjecture}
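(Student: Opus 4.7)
The plan is to split the family of quadratic twists by the sign of the functional equation and then establish density-$1$ non-vanishing within each parity class. Writing $w(E_D)\in\{+1,-1\}$ for the root number of $E_D$, the functional equation forces $r_{an}(E_D)$ even exactly when $w(E_D)=+1$ and odd exactly when $w(E_D)=-1$. The local root number factorization, together with a standard sieve over square-free fundamental discriminants (separating the archimedean factor, the factors at primes of bad reduction for $E$, and the genus character contribution), should show that $w(E_D)=+1$ and $w(E_D)=-1$ each occur with natural density $1/2$ as $D$ varies; this is the input that delivers the $50\%$/$50\%$ split between even and odd analytic rank. The conjecture then reduces to two statements: density $1$ of the even-parity twists have $L(E_D,1)\neq 0$, and density $1$ of the odd-parity twists have $L'(E_D,1)\neq 0$, since by the theorem cited in the excerpt (Gross--Zagier and Kolyvagin) these analytic non-vanishings pin down the rank exactly to $0$ and $1$ respectively.

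For the even-parity family I would attack $L(E_D,1)\neq 0$ by computing moments of the central value over square-free $D$ in a dyadic window. The first and second moments, evaluated via approximate functional equations and Poisson summation in $D$, give by Cauchy--Schwarz a positive proportion of non-vanishing (the Heath-Brown and Ono--Skinner results). To upgrade from positive proportion to density $1$, I would introduce a mollifier designed to kill the first-order zero and then match higher moments against the orthogonal symmetry prediction of Katz--Sarnak, in the spirit of Soundararajan and Iwaniec--Sarnak: if the $2k$-th mollified moment matches the predicted $(\log X)^{k(k-1)/2}$ behavior sharply, the density of twists with an extra zero must be $o(1)$.

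For the odd-parity family the analogous program replaces $L(E_D,1)$ by $L'(E_D,1)$, following Bump--Friedberg--Hoffstein and Murty--Murty for the moment computations, and then using Gross--Zagier to translate $L'(E_D,1)\neq 0$ into non-triviality of the Heegner point on $E_D$ over the auxiliary imaginary quadratic field $K$. Here the $p$-adic factorization results of the present paper enter as the main arithmetic input: the generalized factorization, valid now for arbitrary $p$ and for even Dirichlet characters of any conductor, should allow one to pair a quadratic twist $E_D$ with an auxiliary twist whose $p$-adic $L$-function is more tractable, transfer the $p$-adic non-vanishing across the factorization, and then feed it into a Iwasawa-theoretic rank bound to conclude $r_{an}(E_D)=1$ on a density-$1$ set.

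The main obstacle is closing the gap between positive proportion and density $1$, i.e.\ showing that the twists with $r_{an}(E_D)\geq 2$ within each parity class form a set of density zero. Unconditionally, mollified moment methods, Selmer-group averaging, and the geometry-of-numbers inputs of Bhargava et al.\ all stop short of density $1$ and seem to require either GRH (to control the exceptional zeros uniformly in $D$) or the full Katz--Sarnak symmetry-type conjecture for the quadratic twist family. I expect the sharpest new work to lie in bounding the exceptional set: one must rule out a thin but positive-density family of $D$ where an accidental Selmer class or an extra zero of the mollified $L$-function contributes, and I know of no existing technique that achieves this without conditional input.
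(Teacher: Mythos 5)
There is a fundamental mismatch here: the statement you were asked about is Goldfeld's conjecture itself, which the paper does not prove and does not claim to prove --- it is stated as an open conjecture, and the paper explicitly notes that no single elliptic curve is known to satisfy it. The paper's actual contributions are directed at the \emph{weak} version (Conjecture~\ref{weakgoldfeld}): via the factorization of measures (Theorem~\ref{t2.13}), congruences of $f_D$ with Eisenstein series mod $3$, and the Nakagawa--Horie/Byeon class-number indivisibility input, it produces explicit positive lower bounds such as $2^{-1-\omega(ND/3^{v_3(ND)})}$ for the proportion of fields $K$ with $E_D/K$ of rank $1$, and $\phi(N)/(4N)$ for the proportion of twists over $\mathbb{Q}$ of rank $0$ and of rank $1$ (Theorem~\ref{t9.1}). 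These are positive-proportion statements, not the $50\%/50\%/0\%$ densities demanded by Goldfeld.

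Your proposal is therefore not a proof, and you largely acknowledge this: the two pivotal steps --- that density $1$ of even-parity twists have $L(E_D,1)\neq 0$ and density $1$ of odd-parity twists have $L'(E_D,1)\neq 0$ --- are precisely the open content of the conjecture, and the mollified-moment, Heath-Brown/Ono--Skinner, and Bhargava-type inputs you cite are known to stop at positive proportions (or require GRH/Katz--Sarnak-type hypotheses). The specific place where your sketch overreaches is the claim that the present paper's generalized factorization of measures could be ``fed into an Iwasawa-theoretic rank bound to conclude $r_{an}(E_D)=1$ on a density-$1$ set.'' The factorization, combined with Proposition~\ref{p6.4} and the class-number criterion of Proposition~\ref{p6.5}, only certifies rank $1$ when an explicit mod-$3$ quantity (a product of class numbers) is nonzero, and indivisibility of class numbers by $3$ is itself only known for a proportion bounded below by $1/2$ within the relevant congruence classes --- nothing close to density $1$ in $D$. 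So the gap between ``positive proportion'' and ``$50\%$'' that you correctly identify as the main obstacle is not narrowed by the paper's results; the correct conclusion available from this paper is the weak Goldfeld conjecture for curves satisfying Assumptions~\ref{assump:residuallyreducible} and \ref{assump:*} together with $\gcd(N,6)=1$, not the full conjecture stated here.
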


However, Goldfeld's conjecture is still very open. There is no elliptic curve which has been shown to satisfy Goldfeld's conjecture. We will instead study the following weaker version of Goldfeld's conjecture (see, for example, \cite[Conjecture~1.2]{kriz2019goldfeld}).

\begin{conjecture}[Weak Goldfeld]
As in Conjecture~\ref{goldfeld}, fix $E$ and let $\{E_D\}$ be the family of quadratic twists of $E$. A positive proportion of the $E_D$ have rank $0$ and a positive proportion of the $E_D$ have rank $1$.
\label{weakgoldfeld}
\end{conjecture}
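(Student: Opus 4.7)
The plan is to attack the two halves of Conjecture~\ref{weakgoldfeld} (positive proportion of rank $0$ and positive proportion of rank $1$) separately, reducing each to an analytic non-vanishing statement and then exploiting the generalized factorization of measures featured in the paper's abstract. In both cases the Gross-Zagier-Kolyvagin theorem recorded earlier in the excerpt lets us replace the algebraic-rank question by an analytic-rank question: it suffices to produce a positive proportion of fundamental discriminants $D$ with $L(E_D,1)\neq 0$, and a positive proportion with $L(E_D,1)=0$ and $L'(E_D,1)\neq 0$.

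For the rank-$0$ half, I would first partition the fundamental discriminants $D$ by the sign of the functional equation of $E_D$, restricting to those discriminants for which the sign is $+1$. Then, following the structure advertised in the abstract, I would use the factorization of the $p$-adic measure associated to $E$ and a suitable even Dirichlet character $\chi$ to write the relevant central $L$-values in a form amenable to a mod-$p$ non-vanishing argument. An averaging or sieve over $D$, combined with Erd\H{o}s-Kac-type control on the prime factor counting function $\omega(D)$, should then produce a positive proportion of $D$ with $L(E_D,1)\neq 0$.

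For the rank-$1$ half, the natural route is via Heegner points over auxiliary imaginary quadratic fields $K$. I would first apply the factorization theorem to decompose the $p$-adic $L$-function of the base change $E_K$ as a product of $p$-adic $L$-functions of $E$ twisted by characters of $K$; together with a $p$-adic Gross-Zagier / Bertolini-Darmon-Prasanna style formula, non-vanishing of a derivative at the central point then translates into non-triviality of a Heegner point. Combining with functional-equation sign $-1$ for the appropriate family of twists and with Kolyvagin, this yields $r_{alg}(E_D)=1$ for a positive proportion of $D$. The cubic/sextic-twist machinery in the paper is the tool that lets one combine data from several imaginary quadratic fields $K$ in order to reach quadratic twists over $\mathbb{Q}$.

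The main obstacle will be the rank-$1$ half, specifically establishing non-vanishing of the $p$-adic $L$-value (equivalently, non-triviality of the Heegner point) for a \emph{positive} proportion of $D$ after imposing the local conditions needed for the Heegner hypothesis to hold uniformly. This is where the full strength of the generalized factorization (arbitrary $p$, arbitrary conductor for $\chi$) is essential: Gross's original factorization is not flexible enough to handle the range of local conditions that arise when $D$ varies, and it is precisely the extra flexibility in $p$ and in the conductor of $\chi$ that allows the sieve to survive. I therefore expect the resulting theorem to require technical hypotheses on $E$ (residual irreducibility of the mod-$p$ representation, a good ordinary prime $p$, and a Heegner-type assumption on the conductor) rather than to prove Conjecture~\ref{weakgoldfeld} for all $E$ unconditionally.
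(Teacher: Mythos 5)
Your rank-$1$ architecture---Heegner points over auxiliary imaginary quadratic fields $K$, a BDP-type $p$-adic formula, Kolyvagin, and a root-number count to descend to $\mathbb{Q}$---is indeed the route of Theorem~\ref{t9.1}, and you correctly anticipate that only a conditional statement for special $E$ comes out. But there is a genuine gap at the decisive step, and one of your proposed hypotheses points in exactly the wrong direction. You never say how the mod-$p$ nonvanishing of the $p$-adic $L$-value (equivalently, nontriviality of the Heegner point) is to be established for a \emph{positive proportion} of twists: ``an averaging or sieve over $D$, combined with Erd\H{o}s--Kac'' is not an argument, and in the paper Erd\H{o}s--Kac enters only in the auxiliary Theorem~\ref{t6.11} about pairs $(D,\Delta_K)$, not in the proof of weak Goldfeld. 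The paper's mechanism is: because the mod-$3$ representation of $E$ is residually \emph{reducible} (Assumption~\ref{assump:residuallyreducible}), $f_D$ is congruent mod $3$ to an Eisenstein series $E_2^{\chi_D,\chi_D}$ (Proposition~\ref{p4.2}); the factorization theorem then lets one compute $S_{\mathbb{N}\chi_D}(E_2^{\chi_D,\chi_D})$ as a product of two Bernoulli numbers, i.e.\ essentially $h_{\mathbb{Q}(\sqrt{-3D})}h_{\mathbb{Q}(\sqrt{D\Delta_K})}$ mod $3$ (Proposition~\ref{p6.5}); and $3$-indivisibility of these class numbers holds in each relevant congruence class with density at least $\tfrac12$ by Nakagawa--Horie/Byeon \cite{nakagawa1988elliptic,byeon2004class}, which is what makes the proportion positive. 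Your hypothesis of residual \emph{irreducibility} of the mod-$p$ representation would destroy precisely this Eisenstein congruence, so the nonvanishing input would be gone; the ``extra flexibility'' of the generalized factorization is not what carries the sieve---the class-number density theorem is.

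Your rank-$0$ half is also not how the paper proceeds, and as written it has no engine either: rather than sieving directly for $L(E_D,1)\neq 0$, the paper gets rank $0$ for free from the same Heegner construction. If $E_D/K$ has rank $1$, then the rank of $E_D/\mathbb{Q}$ equals $\tfrac{1-w_{E_D/\mathbb{Q}}}{2}$, so twists with root number $+1$ give rank $0$ and those with root number $-1$ give rank $1$; the bound $\tfrac{\phi(N)}{4N}$ for each comes from counting residues of $D$ modulo $N$ with $\left(\frac{D}{-N}\right)=\pm 1$ (proportion $\tfrac{\phi(N)}{2N}$ each) intersected with the $\geq\tfrac12$ density of $D$ whose relevant class number is prime to $3$ in each residue class. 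The missing idea in your proposal is thus the reduction of nonvanishing to $3$-indivisibility of class numbers via the Eisenstein congruence, combined with the class-number density theorem; without it, neither half closes.
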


In the last section of this paper, we will prove a result which, given certain conditions on the elliptic curve, guarantee that a positive proportion of its quadratic twists will have rank $0$ and $1$; in addition, we give lower bounds for these proportions.

\subsection{Measures on profinite groups}
\label{subsec:measures}

We follow the exposition in \cite[\S1]{gross1980factorization}. Let $p\in\mathbb{Z}$ be a prime, $\mathbb{Z}_p$ the ring of $p$-adic integers, $\mathbb{Q}_p$ the field of $p$-adic numbers, and $\mathbb{C}_p$ the algebraic closure of $\mathbb{Q}_p$. Now let $\mathbb{D}_p$ be the ring of integral elements in $\mathbb{C}_p$. For a commutative profinite group $G$, we consider its completed group algebra over $\mathbb{D}_p$, $\Lambda_G\coloneqq\mathbb{D}_p[[G]]=\varprojlim_{H\subset G\text{ open}}\mathbb{D}_p[G/H]$. The elements of $\Lambda_G$ are called measures on $G$. We also define $\Lambda_G'$, the total ring of fractions of $\Lambda_G$, as the ring whose elements are $\alpha/\beta$ for $\alpha,\beta\in \Lambda_G$ and $\beta$ is not a zero-divisor.

We define a bilinear pairing between continuous functions $G\to\mathbb{C}_p$ and measures in $\Lambda_G$ by approximating $f$ by locally constant functions and taking a limit, as in \cite{serre1978residu}: $$\langle f,\lambda\rangle=\int_G f\,d\lambda.$$ For $\lambda=\alpha/\beta\in \Lambda_G'$, we extend this pairing by $\langle f,\lambda\rangle\coloneqq \langle f,\alpha\rangle/\langle f,\beta\rangle$. This construction is well-defined since it does not depend on the representation of $\lambda$, and agrees with our previous definition for $\lambda\in\Lambda_G$.

Let $K$ be an imaginary quadratic field. We will primarily consider the case where $G=\gal(K(\mu_{p^\infty})/\mathbb{Q})$ or $G=\gal(K(\mu_{p^\infty})/\mathbb{Q})/\sigma$ where $\sigma$ is complex conjugation, and $f=\chi$ is a (continuous) character from $G$ to $\mathbb{D}_p^\times$.

\subsection{Structure of the paper and main results}

Let $p$ be a prime, $K=\mathbb{Q}(\sqrt{-C})$ an imaginary quadratic field where $p$ splits, and $\chi$ a continuous $p$-adic character of $\gal(K(\mu_{p^{\infty}})/\mathbb{Q})$ which is trivial on complex conjugation. Let $\chi_K$ be the restriction of $\chi$ to $\gal(K(\mu_{p^{\infty}})/K)$, $\epsilon$ the quadratic character modulo $C$, and $\omega$ the Teichm\"uller character. As in \cite{gross1980factorization}, define the measures $\lambda_1,\lambda_2,\lambda_3$ by $\langle\chi,\lambda_1\rangle=L_p(0,\chi_K)$, $\langle\chi,\lambda_2\rangle=L_p(0,\chi\epsilon\omega)$, and $\langle\chi,\lambda_3\rangle=L_p(1,\chi^{-1})$. Motivated by the classical factorization of $L$-series $L(s,(\chi_K)_\infty)=L(s,\chi_\infty\epsilon)L(s,\chi_\infty)$, Gross \cite[Theorem~3.1]{gross1980factorization} derives the factorization of measures $$\lambda_1=\lambda_2\cdot \lambda_3,$$ when $p$ is split in $K$ and $\chi$ is a finite even Dirichlet character whose conductor is a power of $p$. In \S\ref{sec:factorization}, we extend this result in Theorem~\ref{t2.13} to all $p$ (not just split $p$) and any finite even Dirichlet character $\chi$ (with any conductor).

We then turn our attention to elliptic curves. In \S\ref{sec:conventions}, we introduce assumptions on the elliptic curve which will hold for the remainder of the paper. We will assume that $E$ is residually reducible modulo $3$ (Assumption \ref{assump:residuallyreducible}), and we will work with integers $D$ and imaginary quadratic fields $K$ satisfying various divisibility and congruence conditions relating $D$, the conductor of $E$, and the discriminant of $K$ (Assumptions \ref{assump:heegner} and \ref{assump:*}).

In \S\ref{sec:congruences}, we discuss general congruences of $L$-series and Eisenstein series, especially those associated with quadratic characters. We obtain some auxiliary results concerning the congruence of certain modular forms with Eisenstein series, and calculate the Euler factor at $p$ after $p$-depleting (see \S\ref{subsec:stabilize}).

In \S\ref{subsec:varyK}, we use the factorization in Theorem~\ref{t2.13} to arrive at the two key technical results, Theorem~\ref{t6.6} and Theorem~\ref{t6.7}. Under Assumption \ref{assump:*} and the assumption that $D$ satisfies the nonvanishing of a certain class number modulo $3$ (for $D>0$, we need $3\nmid h_{\mathbb{Q}(\sqrt{-3D})}$ and for $D<0$, we need $3\nmid h_{\mathbb{Q}(\sqrt{D})}$), we find a lower bound on the proportion of imaginary quadratic fields $K$ for which $E_D/K$ has rank $1$. In \S\ref{subsec:varyD}, we vary $D$ instead. Assuming that $\omega(n)$ is sufficiently close to a Gaussian distribution, we find bounds on the proportion of $D$ such that $E_D/K$ has rank $1$; these are given in Theorem~\ref{t6.11}.

In \S\ref{sec:highertwists}, we address cubic and sextic twists. In \S\ref{subsec:cubic}, we obtain results similar to \S\ref{sec:congruences} but for cubic twists. Since sextic twists are a composition of a cubic twist and a quadratic twist, we apply our results from \S\ref{subsec:varyK} to obtain similar results on sextic twists in \S\ref{subsec:sextic}. The results, paralleling Theorem~\ref{t6.6} and Theorem~\ref{t6.7}, are given by Theorem~\ref{t8.3} and Theorem~\ref{t8.4}.

Finally, in \S\ref{sec:ranksoverQ}, we positive lower bounds on the proportion of $D$ for which $E_D/\mathbb{Q}$ has rank $0$ and $1$, under similar assumptions. Given the assumptions before, plus the additional assumption that $3\nmid N\coloneqq \text{cond}(E)$, in Theorem~\ref{t9.1} we find that $E_D/\mathbb{Q}$ has rank $0$ for at least $\frac{\phi(N)}{4N}$ of all such $D$, and rank $1$ for at least $\frac{\phi(N)}{4N}$ of all such $D$. As an easy corollary, we conclude Conjecture~\ref{weakgoldfeld} for certain elliptic curves.

\subsection*{Acknowledgements}
The author would like to thank Daniel Kriz for supervising this project, mentoring the author, and providing much needed guidance. The author also thanks Jonathan Love and Professor Andrew Sutherland for many helpful discussions and feedback.

\section{Factorization of measures}
\label{sec:factorization}
We follow the notation in \cite{gross1980factorization}. Let $p$ be a prime, $K=\mathbb{Q}(\sqrt{-C})$ an imaginary quadratic field where $p$ splits, $\chi$ a finite even Dirichlet character on $\gal(K(\mu_{p^{\infty}})/\mathbb{Q})$, $\chi_K$ the restriction of $\chi$ to $\gal(K(\mu_{p^{\infty}})/K)$, and $\chi_\infty$ the composition of $\chi$ with some fixed injection $\overline{\mathbb{Q}}\hookrightarrow \mathbb{C}$. Let $\epsilon$ be the quadratic character modulo $C$ and $\omega$ the Teichm\"uller character. We define $\lambda_1,\lambda_2,\lambda_3$ as in \cite[p.~92]{gross1980factorization}, and obtain the formulas $\langle \chi, \lambda_1\rangle =L_p(0,\chi_K)$, $\langle \chi,\lambda_2\rangle = L_p(0,\chi\epsilon\omega)$, and $\langle \chi,\lambda_3\rangle=L_p(1,\chi^{-1})$, as in \cite[p.~93]{gross1980factorization}.

\subsection{Dirichlet characters with conductors a prime power}
We start with the classical factorization $L(s,(\chi_K)_{\infty})=L(s,\chi_{\infty}\epsilon)L(s,\chi_{\infty})$
and the functional equation for $L(s,\chi)$: $$L(s,\chi)=L(1-s,\overline{\chi})\frac{\Gamma\left(\frac{1-s+a}{2}\right)}{\Gamma\left(\frac{s+a}{2}\right)}\left(\frac{k}{\pi}\right)^{\frac{1}{2}-s}\frac{\tau(\chi)}{i^a\sqrt{k}},$$ where $\Gamma$ is the gamma function, $k$ is the conductor of $\chi$, $\tau=\sum_{n=1}^{k}\chi(n)e^{2\pi i n/k}$ is the Gauss sum, and $a=0$ if $\chi(-1)=1$ while $a=1$ if $\chi(-1)=-1$.

\begin{prop}
$L'(0,(\chi_K)_{\infty})=L(0,\chi_{\infty}\epsilon ) L'(0,\chi_{\infty})$.
\label{p2.1}
\end{prop}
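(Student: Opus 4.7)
The plan is to start from the given classical factorization $L(s,(\chi_K)_{\infty})=L(s,\chi_{\infty}\epsilon)L(s,\chi_{\infty})$, differentiate both sides in $s$, and evaluate at $s=0$. By the product rule,
\[
L'(0,(\chi_K)_{\infty}) = L'(0,\chi_{\infty}\epsilon)\,L(0,\chi_{\infty}) + L(0,\chi_{\infty}\epsilon)\,L'(0,\chi_{\infty}).
\]
So the proposition reduces to the single vanishing statement $L(0,\chi_{\infty})=0$.

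To establish this vanishing, I would invoke the stated functional equation applied to $\chi_{\infty}$. Since $\chi$ is even, we have $\chi_{\infty}(-1)=1$, so $a=0$. At $s=0$ the functional equation reads
\[
L(0,\chi_{\infty}) = L(1,\overline{\chi_{\infty}})\,\frac{\Gamma(1/2)}{\Gamma(0)}\left(\frac{k}{\pi}\right)^{1/2}\frac{\tau(\chi_{\infty})}{\sqrt{k}}.
\]
The factor $1/\Gamma(0)=0$ and $L(1,\overline{\chi_{\infty}})$ is finite (since $\chi$ is a nontrivial finite Dirichlet character in the relevant case), so the right-hand side is $0$. Substituting back into the differentiated identity kills the first term and leaves precisely $L'(0,(\chi_K)_{\infty}) = L(0,\chi_{\infty}\epsilon)\,L'(0,\chi_{\infty})$.

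The main things to check along the way are: (i) that one is entitled to differentiate termwise at $s=0$, which is fine because each $L$-function is entire (or at worst meromorphic with no pole at $s=0$ under the standing assumption that $\chi_{\infty}$ is nontrivial, which is necessary for the statement itself to be non-degenerate), and (ii) that $L(0,\chi_{\infty}\epsilon)$ is not suspiciously zero for the analogous reason. Here $K$ is imaginary quadratic, so $\epsilon(-1)=-1$; combined with $\chi_{\infty}(-1)=1$, the product $\chi_{\infty}\epsilon$ is odd and $a=1$ in its functional equation, so the $\Gamma$-quotient is $\Gamma(1)/\Gamma(1/2)$, which is finite and nonzero. Thus no further vanishing is forced, and the identity is proved exactly as above. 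The whole argument is essentially a one-line calculation with the functional equation; the only place subtlety could enter is in justifying the vanishing $L(0,\chi_{\infty})=0$ via the $\Gamma(0)$ pole, but this is standard.
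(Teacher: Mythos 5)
Your argument is correct and is essentially identical to the paper's proof: differentiate the factorization, evaluate at $s=0$, and kill the first term via $L(0,\chi_{\infty})=0$, which follows from the functional equation with $a=0$ and $1/\Gamma(0)=0$. The extra parity check that $\chi_{\infty}\epsilon$ is odd (so no analogous vanishing occurs for the surviving factor) is a sensible addition but not needed for the identity itself.
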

\begin{proof}
By differentiating, $$L'(s,(\chi_K)_\infty)=L'(s,\chi_\infty \epsilon)L(s,\chi_\infty)+L(s,\chi_\infty\epsilon)L'(s,\chi_\infty).$$ Setting $s=0$ yields $$L'(0,(\chi_K)_\infty)=L'(0,\chi_\infty \epsilon)L(0,\chi_\infty)+L(0,\chi_\infty\epsilon)L'(0,\chi_\infty).$$ But since $\chi_\infty(-1)=1\implies a=0$, we have $$L(0,\chi_\infty)=L(1,\overline{\chi_\infty})\frac{\Gamma\left(\frac{1}{2}\right)}{\Gamma(0)}\left(\frac{k}{\pi}\right)^{1/2}\frac{\tau(\chi)}{\sqrt{k}}.$$ Notably, $\frac{1}{\Gamma(0)}=0$, which concludes the result.
\end{proof}

The explicit formulas of Dirichlet and Kronecker are as follows:
\begin{itemize}
    \item $L'(0,(\chi_K)_{\infty})=-\frac{1}{6p^r}\sum_A \chi_\infty (a) \log F^+(a)_\infty$ \cite[p.~91]{gross1980factorization},
    \item $L(0,\chi)=-\sum_{a=1}^{f} \frac{a}{f}\chi(a)=-B_{1,\chi}$ for $f$ the conductor of $\chi$ \cite[p.~88]{gross1980factorization},
    \item $L(1,\chi_{\infty})=-g(\chi_\infty)\sum_A \chi_{\infty}^{-1}(a)\log C^+(a)_\infty$ \cite[p.~91]{gross1980factorization} where $g(\chi)=\frac{1}{f}\sum_{a=1}^{f}\chi(a)e^{2\pi ia/f}$ for $f$ the conductor of $\chi$ \cite[p.~88]{gross1980factorization}.
\end{itemize}

\begin{prop}
\label{p2.2}
$L'(0,\chi_\infty)=-\frac{g(\chi_\infty)}{2}L(1,\overline{\chi_\infty})$.
\end{prop}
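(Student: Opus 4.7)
The plan is to apply the functional equation to $L(s,\chi_\infty)$ directly. Since $\chi$ is even, $\chi_\infty(-1)=1$, so $a=0$, and the functional equation specializes to
\[
L(s,\chi_\infty)=L(1-s,\overline{\chi_\infty})\,\frac{\Gamma((1-s)/2)}{\Gamma(s/2)}\,\left(\frac{k}{\pi}\right)^{1/2-s}\frac{\tau(\chi_\infty)}{\sqrt{k}}.
\]

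First I would observe that the right-hand side vanishes at $s=0$: the factor $1/\Gamma(s/2)$ has a zero there because $\Gamma$ has a simple pole at $0$, whereas $\Gamma((1-s)/2)$, $(k/\pi)^{1/2-s}$, and $\tau(\chi_\infty)/\sqrt{k}$ are all finite and nonzero at $s=0$. This recovers $L(0,\chi_\infty)=0$, consistent with $L(0,\chi_\infty)=-B_{1,\chi_\infty}=0$ for a nontrivial even character (which is what the proof of Proposition~\ref{p2.1} already exploited).

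Next I would Taylor expand both sides to first order in $s$. The left side contributes $s\,L'(0,\chi_\infty)+O(s^2)$. On the right, $1/\Gamma(s/2)$ is the unique factor with a zero at $s=0$, so to linear order every other factor may be frozen at $s=0$. The key expansion is $1/\Gamma(s/2)=s/2+O(s^2)$, which follows from $\Gamma(z)=z^{-1}-\gamma+O(z)$ near $z=0$, or equivalently from the Weierstrass product for $1/\Gamma$. The remaining factors evaluated at $s=0$ give $L(1,\overline{\chi_\infty})\cdot\Gamma(1/2)\cdot(k/\pi)^{1/2}\cdot\tau(\chi_\infty)/\sqrt{k}$, and the product $\sqrt{\pi}\cdot\sqrt{k/\pi}\cdot 1/\sqrt{k}=1$ collapses cleanly, so the coefficient of $s$ on the right is a simple explicit multiple of $\tau(\chi_\infty)\,L(1,\overline{\chi_\infty})$.

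Equating coefficients of $s$ on the two sides, and then translating $\tau(\chi_\infty)$ to $g(\chi_\infty)$ via the normalization recorded in the bullet list of Dirichlet--Kronecker formulas, produces the asserted identity. The main obstacle is purely bookkeeping: tracking the sign coming out of the functional equation together with the $\Gamma$-expansion, and confirming that the conductor normalization linking $\tau(\chi_\infty)$ and $g(\chi_\infty)$ combines correctly with the factor $1/2$ from $1/\Gamma(s/2)=s/2+O(s^2)$ to give precisely $-g(\chi_\infty)/2$. No deeper analytic input is needed beyond the functional equation and the local behavior of $\Gamma$ at $0$.
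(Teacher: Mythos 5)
Your method is the same as the paper's: both extract the identity from the functional equation at $s=0$ by exploiting the simple pole of $\Gamma(s/2)$ (the paper differentiates the product and notes that every term carrying $1/\Gamma(0)$ dies, while you multiply through by $\Gamma(s/2)$ and compare coefficients of $s$; these are the same computation). Your expansion is carried out correctly and yields $2L'(0,\chi_\infty)=\tau(\chi_\infty)\,L(1,\overline{\chi_\infty})$, which is exactly what the paper itself obtains in the general-conductor case (proof of Proposition~\ref{p2.8}).

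The step you defer as ``bookkeeping,'' however, does not close as you claim. With the normalization recorded in the bullet list, $g(\chi)=\tau(\chi)/k$, your identity reads $L'(0,\chi_\infty)=\frac{\tau(\chi_\infty)}{2}L(1,\overline{\chi_\infty})=\frac{k\,g(\chi_\infty)}{2}L(1,\overline{\chi_\infty})$, which differs from the asserted $-\frac{g(\chi_\infty)}{2}L(1,\overline{\chi_\infty})$ both in sign and by the factor $k$; no translation between $\tau$ and $g$ can manufacture the minus sign, so asserting that the constants ``combine correctly to give precisely $-g(\chi_\infty)/2$'' without checking is a genuine gap. To be fair, the mismatch lies in the proposition's stated constant rather than in your analysis: the paper's downstream use, namely $L'(0,\chi_\infty)=-\frac{1}{2}\sum_A\chi_\infty(a)\log C^+(a)_\infty$ feeding into Proposition~\ref{p2.3}, is consistent with the $\tau(\chi_\infty)/2$ form you derived, via $L(1,\overline{\chi_\infty})=-g(\overline{\chi_\infty})\sum_A\chi_\infty(a)\log C^+(a)_\infty$ and $\tau(\chi_\infty)g(\overline{\chi_\infty})=1$. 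So: right approach and correct handling of the functional equation, but you should either carry out the $\tau$-to-$g$ conversion explicitly and flag the sign/normalization discrepancy with the statement, or state your conclusion in the $\tau$-form rather than claiming it matches $-g(\chi_\infty)/2$ as written.
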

\begin{proof}
By differentiating, we have
\begin{align*}
L'(s,\chi)=&L'(1-s,\overline{\chi})\frac{\Gamma\left(\frac{1-s+a}{2}\right)}{\Gamma\left(\frac{s+a}{2}\right)}\left(\frac{k}{\pi}\right)^{\frac{1}{2}-s}\frac{g(\chi)}{i^a\sqrt{k}},\\
&-\frac{1}{2}L(1-s,\overline{\chi})\frac{\Gamma'\left(\frac{1-s+a}{2}\right)}{\Gamma\left(\frac{s+a}{2}\right)}\left(\frac{k}{\pi}\right)^{\frac{1}{2}-s}\frac{g(\chi)}{i^a\sqrt{k}},\\
&-\frac{\Gamma'\left(\frac{s+a}{2}\right)}{2}L(1-s,\overline{\chi})\frac{\Gamma\left(\frac{1-s+a}{2}\right)}{\Gamma\left(\frac{s+a}{2}\right)^2}\left(\frac{k}{\pi}\right)^{\frac{1}{2}-s}\frac{g(\chi)}{i^a\sqrt{k}},\\
&-\log(k/\pi)L(1-s,\overline{\chi})\frac{\Gamma\left(\frac{1-s+a}{2}\right)}{\Gamma\left(\frac{s+a}{2}\right)}\left(\frac{k}{\pi}\right)^{-\frac{1}{2}-s}\frac{g(\chi)}{i^a\sqrt{k}}.\\
\end{align*}
Since $\chi_\infty(-1)=1$, we have $a=0$. Setting $s=0$ yields
\begin{align*}
L'(0,\chi_\infty)=&L'(1,\overline{\chi_\infty})\frac{\Gamma\left(\frac{1}{2}\right)}{\Gamma(0)}\left(\frac{k}{\pi}\right)^{\frac{1}{2}}\frac{g(\chi_\infty)}{\sqrt{k}},\\
&-\frac{1}{2}L(1,\overline{\chi_\infty})\frac{\Gamma'\left(\frac{1}{2}\right)}{\Gamma(0)}\left(\frac{k}{\pi}\right)^{\frac{1}{2}}\frac{g(\chi_\infty)}{\sqrt{k}},\\
&-\frac{\Gamma'(0)}{2}L(1,\overline{\chi_\infty})\frac{\Gamma\left(\frac{1}{2}\right)}{\Gamma(0)^2}\left(\frac{k}{\pi}\right)^{\frac{1}{2}}\frac{g(\chi_\infty)}{\sqrt{k}},\\
&-\log(k/\pi)L(1,\overline{\chi_\infty})\frac{\Gamma\left(\frac{1}{2}\right)}{\Gamma(0)}\left(\frac{k}{\pi}\right)^{-\frac{1}{2}}\frac{g(\chi_\infty)}{\sqrt{k}}.\\
\end{align*}
Note that the Laurent series of $\Gamma(s)$ is $\Gamma(s)=\frac{1}{s}+a_0+a_1s+\dots$ which implies that $$\frac{1}{\Gamma(0)}=0,\hspace{5mm}\frac{\Gamma'(0)}{\Gamma(0)^2}=\left(\frac{\frac{-1}{s^2}+a_1+\dots}{\frac{1}{s^2}+\frac{2a_0}{s}+\dots}\right)\bigg|_{s=0}=-1.$$ Furthermore, $\Gamma(1/2)=\sqrt{\pi}$. Combining these, we find that three of the terms cancel, which yields $L'(0,\chi_\infty)=-\frac{g(\chi_\infty)}{2}L(1,\overline{\chi_\infty}).$
\end{proof}

Using the identity $L(1,\chi_\infty)=-g(\chi_\infty)\sum_{A}\overline{\chi_\infty}(a)\log C^+(a)_\infty$, and the fact that $\tau(\overline{\chi_\infty})g(\chi_\infty)=\frac{\overline{\tau(\chi_\infty)}\tau(\chi_\infty)}{f}=\frac{|\sqrt{f}|^2}{f}=1$, we find that $$L'(0,\chi_\infty)=\frac{1}{2}\sum_A \chi_{\infty}(a)\log C^+(a)_\infty.$$

Now, combining these with the fact that $L(0,\chi_\infty \epsilon)=-B_{1,\chi_\infty \epsilon}$, we find that $$-\frac{1}{6p^r}\sum_A \chi_\infty(a)\log F^+(a)_\infty=(-B_{1,\chi_\infty \epsilon})\left(-\frac{1}{2}\sum_A \chi_\infty (a)\log C^+(a)_\infty\right),$$ or equivalently
\begin{prop}
\label{p2.3}
The equation $$-3p^r B_{1,\chi_{\infty}\epsilon}\sum_{A}\chi_{\infty}(a)\log C^+(a)_\infty = \sum_A \chi_{\infty}(a)\log F^+(a)_\infty$$ holds for all $p$.
\end{prop}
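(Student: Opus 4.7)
The proposition is an almost immediate consequence of the three explicit formulas (of Dirichlet for $L(0,\chi)$, and of Kronecker for $L'(0,(\chi_K)_\infty)$ and $L(1,\chi_\infty)$) combined with Propositions~\ref{p2.1} and~\ref{p2.2}. The plan is simply to substitute and clear denominators; no new analytic ingredient is needed beyond what has already been assembled.

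First I would apply Proposition~\ref{p2.1} to write $L'(0,(\chi_K)_\infty) = L(0,\chi_\infty\epsilon)\cdot L'(0,\chi_\infty)$. On the left-hand side I substitute Kronecker's formula to replace $L'(0,(\chi_K)_\infty)$ by $-\frac{1}{6p^r}\sum_A \chi_\infty(a)\log F^+(a)_\infty$, and into the first factor on the right I substitute Dirichlet's formula $L(0,\chi_\infty\epsilon) = -B_{1,\chi_\infty\epsilon}$.

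The remaining step is to rewrite $L'(0,\chi_\infty)$ as a logarithmic sum over the $C^+(a)_\infty$. For this I would invoke Proposition~\ref{p2.2} to obtain $L'(0,\chi_\infty) = -\frac{g(\chi_\infty)}{2}L(1,\overline{\chi_\infty})$, and then apply Kronecker's formula for $L(1,\overline{\chi_\infty})$. The one delicate point, and the main (modest) obstacle, is the cancellation of the Gauss sum factors, which rests on the identity $\tau(\overline{\chi_\infty})g(\chi_\infty) = 1$. This in turn uses that $\chi_\infty$ is even (so that $\tau(\overline{\chi_\infty}) = \overline{\tau(\chi_\infty)}$) together with the classical $|\tau(\chi_\infty)|^2 = f$ for primitive $\chi_\infty$. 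After this cancellation one is left with an expression of the shape $\pm\tfrac{1}{2}\sum_A \chi_\infty(a)\log C^+(a)_\infty$, the sign being fixed by the gamma-function bookkeeping in Proposition~\ref{p2.2}.

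Substituting these into the factorization and multiplying both sides of the resulting equation by $-6p^r$ produces the stated identity. The only real care needed is in tracking signs and in confirming the Gauss sum cancellation; everything else is a formal substitution.
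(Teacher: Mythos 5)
Your proposal is correct and takes essentially the same route as the paper: both combine Proposition~\ref{p2.1} and Proposition~\ref{p2.2} with the Dirichlet and Kronecker explicit formulas, cancel the Gauss sum factors via $\tau(\overline{\chi_\infty})g(\chi_\infty)=1$ (using that $\chi_\infty$ is even and $|\tau(\chi_\infty)|^2=f$), and then clear denominators to obtain the stated identity.
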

\begin{remark}
This is \cite[(3.5)]{gross1980factorization}, but he only proves it for split $p$.
\end{remark}

Now recall that $C^+(a)_\infty$ and $F^+(a)_\infty$ are $p$-units in the field $M_{p^r}=\mathbb{Q}\left(\cos \frac{2\pi}{p^r}\right)$. Let $E(M_{p^r})$ denote the group of all $p$-units. It is a finitely generated subgroup of $\mathbb{R}^\times$. Now consider the complex vector space $\mathbb{C}\otimes_\mathbb{Z}E(M_{p^r})$. This is isomorphic to the regular representation of $A=\text{Gal}(M_{p^r}/\mathbb{Q})$. Now note that for all $\sigma \in A$, due to transport of structure, we have that 
\begin{align*}
\sigma\left(\sum_A \chi_\infty (a)\otimes_{\mathbb{Z}}C^+(a)_\infty\right)&=\sum_{A}\chi_{\infty}(a)\otimes_{\mathbb{Z}}C^+(\sigma a)_{\infty}=\chi_{\infty}^{-1}(\sigma)\sum_A \chi_{\infty}(\sigma a)\otimes_{\mathbb{Z}}C^+(\sigma a)_{\infty},\\
&=\chi_{\infty}^{-1}(\sigma)\left(\sum_A \chi_\infty (a)\otimes_{\mathbb{Z}}C^+(a)_\infty\right),\\
\sigma\left(\sum_A \chi_\infty (a)\otimes_{\mathbb{Z}}F^+(a)_\infty\right)&=\sum_{A}\chi_{\infty}(a)\otimes_{\mathbb{Z}}F^+(\sigma a)_{\infty}=\chi_{\infty}^{-1}(\sigma)\sum_A \chi_{\infty}(\sigma a)\otimes_{\mathbb{Z}}F^+(\sigma a)_{\infty},\\
&=\chi_{\infty}^{-1}(\sigma)\left(\sum_A \chi_\infty (a)\otimes_{\mathbb{Z}}F^+(a)_\infty\right).\\
\end{align*}
This implies that both $\sum_A \chi_\infty (a)\otimes_{\mathbb{Z}}C^+(a)_\infty$ and $\sum_A \chi_\infty (a)\otimes_{\mathbb{Z}}F^+(a)_\infty$ lie in the $\chi_{\infty}^{-1}$-eigenspace of $\mathbb{C}\otimes_{\mathbb{Z}}E(M_{p^r})$, which is one-dimensional. Therefore $$\tilde{c}\sum_A \chi_\infty (a)\otimes_{\mathbb{Z}}C^+(a)_\infty=\sum_A \chi_\infty (a)\otimes_{\mathbb{Z}}F^+(a)_\infty$$ for some $\tilde{c}\in\mathbb{C}$. Consider the map $$\gamma :\mathbb{C}\otimes_{\mathbb{Z}} E(M_{p^r})\rightarrow \mathbb{C},$$ defined by $\gamma(c\otimes a)=c\log a$. This map is clearly $\mathbb{C}$-linear, so $$\gamma(\tilde{c}\sum_A \chi_\infty (a)\otimes_{\mathbb{Z}}C^+(a)_\infty)=\tilde{c}\gamma(\sum_A \chi_\infty (a)\otimes_{\mathbb{Z}}C^+(a)_\infty).$$ Applying $\gamma$ to both sides of $\tilde{c}\sum_A \chi_\infty (a)\otimes_{\mathbb{Z}}C^+(a)_\infty=\sum_A \chi_\infty (a)\otimes_{\mathbb{Z}}F^+(a)_\infty$ yields that $$\tilde{c}=-3p^r B_{1,\chi_{\infty}\epsilon}.$$ In particular, note that $E(M_{p^r})\subset \overline{\mathbb{Q}}$. We can actually say that
\begin{prop}
\label{p2.4}
As elements of $\overline{\mathbb{Q}}\otimes_{\mathbb{Z}} E(M_{p^r})$, we have $$-3p^rB_{1,\chi_{\infty}\epsilon}\sum_A \chi_\infty (a)\otimes_{\mathbb{Z}}C^+(a)_\infty=\sum_A \chi_\infty (a)\otimes_{\mathbb{Z}}F^+(a)_\infty.$$
\end{prop}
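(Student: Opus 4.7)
The plan is to bootstrap the $\mathbb{C}$-level identity established immediately above the statement. The discussion preceding the proposition already produced a scalar $\tilde{c}\in\mathbb{C}$ with $\tilde{c}=-3p^rB_{1,\chi_\infty\epsilon}$ satisfying the equation inside $\mathbb{C}\otimes_{\mathbb{Z}}E(M_{p^r})$, so what Proposition~\ref{p2.4} asks for is that this same equation descends to $\overline{\mathbb{Q}}\otimes_{\mathbb{Z}}E(M_{p^r})$. My first step is to check that both sides genuinely live in $\overline{\mathbb{Q}}\otimes_{\mathbb{Z}}E(M_{p^r})$ in the first place. The character $\chi$ is finite, so every value $\chi_\infty(a)$ lies in a cyclotomic subfield of $\overline{\mathbb{Q}}$; the generalized Bernoulli number $B_{1,\chi_\infty\epsilon}=-\sum_{a=1}^{f}(a/f)\chi_\infty\epsilon(a)$ is an explicit $\overline{\mathbb{Q}}$-linear combination of such values, and $p^r\in\mathbb{Q}$. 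Since $C^+(a)_\infty$ and $F^+(a)_\infty$ lie in $E(M_{p^r})$, this makes
\[
X \coloneqq -3p^rB_{1,\chi_\infty\epsilon}\sum_A\chi_\infty(a)\otimes_{\mathbb{Z}}C^+(a)_\infty \;-\;\sum_A\chi_\infty(a)\otimes_{\mathbb{Z}}F^+(a)_\infty
\]
a bona fide element of $\overline{\mathbb{Q}}\otimes_{\mathbb{Z}}E(M_{p^r})$.

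The second step is a flatness argument. The canonical map
\[
\overline{\mathbb{Q}}\otimes_{\mathbb{Z}}E(M_{p^r})\longrightarrow \mathbb{C}\otimes_{\mathbb{Z}}E(M_{p^r})
\]
is induced by the inclusion $\overline{\mathbb{Q}}\hookrightarrow\mathbb{C}$, which is a free (hence faithfully flat) $\overline{\mathbb{Q}}$-module extension; concretely, fixing a $\overline{\mathbb{Q}}$-basis of $\mathbb{C}$ containing $1$ identifies the target with a direct sum of copies of the source, so the map is the inclusion onto the $1$-component and is injective. By construction, $X$ maps to the element whose vanishing was established in the paragraph preceding Proposition~\ref{p2.4} (by applying the $\mathbb{C}$-linear log map $\gamma$ to the one-dimensional $\chi_\infty^{-1}$-eigenspace identity and matching coefficients via Proposition~\ref{p2.3}). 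Injectivity then forces $X=0$ in $\overline{\mathbb{Q}}\otimes_{\mathbb{Z}}E(M_{p^r})$, which is exactly the claim.

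The only potential subtlety I foresee is the one-dimensionality of the $\chi_\infty^{-1}$-eigenspace, but this was already handled above for $\mathbb{C}\otimes_{\mathbb{Z}}E(M_{p^r})$ via the isomorphism with the regular representation of $A=\gal(M_{p^r}/\mathbb{Q})$, and our proof only needs that fact at the $\mathbb{C}$ level. Everything genuinely new here is purely formal, so there is no real analytic obstacle; the work has already been done in Propositions~\ref{p2.1}--\ref{p2.3} and in the eigenspace computation preceding this proposition, and Proposition~\ref{p2.4} is just the observation that the resulting coefficient $-3p^rB_{1,\chi_\infty\epsilon}$ happens to lie in $\overline{\mathbb{Q}}$, allowing the equality to be transported back along the flat extension $\overline{\mathbb{Q}}\hookrightarrow\mathbb{C}$.
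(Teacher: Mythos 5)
Your proof is correct and takes essentially the same route as the paper: the identity already established in $\mathbb{C}\otimes_{\mathbb{Z}}E(M_{p^r})$ via the one-dimensional $\chi_\infty^{-1}$-eigenspace and the log map, together with the algebraicity of $-3p^rB_{1,\chi_\infty\epsilon}$, of the values $\chi_\infty(a)$, and of the units themselves, is descended to $\overline{\mathbb{Q}}\otimes_{\mathbb{Z}}E(M_{p^r})$. The only difference is that you spell out the injectivity of $\overline{\mathbb{Q}}\otimes_{\mathbb{Z}}E(M_{p^r})\to\mathbb{C}\otimes_{\mathbb{Z}}E(M_{p^r})$ (via freeness of $\mathbb{C}$ over $\overline{\mathbb{Q}}$), a step the paper leaves implicit.
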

Now take some $\varphi: \overline{\mathbb{Q}}\xhookrightarrow{}\mathbb{C}_p$. Applying $(1\otimes_\mathbb{Z}\log_p)\circ\varphi$ to both sides, we find the following equality in $\mathbb{C}_p$: $$-3p^rB_{1,\chi\epsilon}\sum_A \chi (a)\log_p C^+(a)_p=\sum_A \chi (a)\log_p F^+(a)_p.$$
Now consider the explicit formulas provided by \cite[p.~93]{gross1980factorization}:
\begin{align*}
L_p(0,\chi_K)&=-\frac{1}{3p^r}g(\chi^{-1})\sum_A \chi(a)\log_p F^+(a)_p,\\
L_p(0,\chi \epsilon \omega) &= -B_{1,\chi\epsilon},\\
L_p(1,\chi^{-1})&=-g(\chi^{-1})\sum_A\chi(a)\log_p C^+(a)_p.\\
\end{align*}
Putting these together yields the $p$-adic identity
\begin{prop}
\label{p2.}
$$L_p(0,\chi_K)=L_p(0,\chi\epsilon\omega)L_p(1,\chi^{-1}).$$
\end{prop}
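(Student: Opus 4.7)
The strategy is to combine the $p$-adic identity
\[-3p^r B_{1,\chi\epsilon}\sum_A \chi(a)\log_p C^+(a)_p = \sum_A \chi(a)\log_p F^+(a)_p,\]
derived just above from Proposition~\ref{p2.4}, with Gross's three explicit formulas for $L_p(0,\chi_K)$, $L_p(0,\chi\epsilon\omega)$, and $L_p(1,\chi^{-1})$. Because those formulas express each of the three $p$-adic $L$-values as a linear combination built from the same Stickelberger-type sums in the $C^+(a)_p$, $F^+(a)_p$ and the Gauss sum $g(\chi^{-1})$, the proposed factorization reduces to a direct substitution.

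First I would form the product
\[L_p(0,\chi\epsilon\omega)\,L_p(1,\chi^{-1}) = (-B_{1,\chi\epsilon})\Bigl(-g(\chi^{-1})\sum_A \chi(a)\log_p C^+(a)_p\Bigr) = B_{1,\chi\epsilon}\,g(\chi^{-1})\sum_A \chi(a)\log_p C^+(a)_p.\]
Then I would factor out $-g(\chi^{-1})/(3p^r)$ so as to recognize the combination $-3p^r B_{1,\chi\epsilon}\sum_A \chi(a)\log_p C^+(a)_p$ inside, and invoke the $p$-adic identity above to replace it by $\sum_A \chi(a)\log_p F^+(a)_p$. The resulting expression
\[-\frac{g(\chi^{-1})}{3p^r}\sum_A \chi(a)\log_p F^+(a)_p\]
is exactly Gross's formula for $L_p(0,\chi_K)$, and the proof is complete.

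There is no real obstacle at this last step: the genuine content was packed into Proposition~\ref{p2.3} (where the archimedean identity is extended from split $p$ of $p$-power conductor to all $p$ and arbitrary even conductor) and into Proposition~\ref{p2.4} together with the transport along $\varphi\colon\overline{\mathbb{Q}}\hookrightarrow\mathbb{C}_p$ via $\log_p$, which converts the archimedean equality into one in $\mathbb{C}_p$. Once those are in hand, matching the resulting $p$-adic identity with Gross's explicit formulas for the three $L_p$-values is purely algebraic bookkeeping.
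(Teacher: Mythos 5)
Your proposal is correct and follows the paper's own argument exactly: apply $(1\otimes\log_p)\circ\varphi$ to Proposition~\ref{p2.4} to get the $p$-adic identity, then substitute Gross's explicit formulas for the three $L_p$-values; your version merely writes out the algebraic bookkeeping the paper leaves implicit. One small aside: at this point in the paper the conductor is still $p^r$ (Proposition~\ref{p2.3} extends only to non-split $p$), and the arbitrary-conductor case is handled separately later, so your parenthetical about ``arbitrary even conductor'' slightly overstates what Proposition~\ref{p2.3} provides, but this does not affect the proof of the statement.
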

Now, following \cite[p.~93]{gross1980factorization}, there exist measures $\lambda_2,\lambda_3$ such that for any finite even Dirichlet character $\chi$ of conductor $p^r$, we have 
\begin{align*}
\langle \chi, \lambda_2\rangle &= L_p(0,\chi\epsilon\omega),\\
\langle \chi,\lambda_3\rangle &=L_p(1,\chi^{-1}).\\
\end{align*}
Now define a measure $\lambda_1$ given by $$\langle \chi, \lambda_1\rangle = L_p(0,\chi_K).$$ Then we have the equality $$\langle \chi, \lambda_1\rangle = \langle \chi,\lambda_2\rangle \langle \chi,\lambda_3\rangle$$ for all finite even Dirichlet characters $\chi$ with conductor $p^r$. Thus we have that
\begin{theorem}
\label{t2.6}
$\lambda_1=\lambda_2\cdot\lambda_3$.
\end{theorem}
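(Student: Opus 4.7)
The plan is a short density argument building directly on the preceding work. From Proposition~\ref{p2.} we have $\langle \chi, \lambda_1\rangle = \langle \chi, \lambda_2\rangle \langle \chi, \lambda_3\rangle$ for every finite even Dirichlet character $\chi$ of conductor a power of $p$. Each such $\chi$ extends to a continuous $\mathbb{D}_p$-algebra homomorphism $\Lambda_G \to \mathbb{C}_p$, so the pairing is multiplicative in the measure slot: $\langle \chi, \lambda_2 \cdot \lambda_3\rangle = \langle \chi,\lambda_2\rangle\langle \chi,\lambda_3\rangle$. Setting $\mu = \lambda_1 - \lambda_2\cdot \lambda_3 \in \Lambda_G$, we therefore have $\langle \chi,\mu\rangle = 0$ for every $\chi$ in this family. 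All that remains is to argue that this forces $\mu=0$.

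For the separation step, I would appeal to the standard fact that a measure on our commutative profinite group $G$ (viewed modulo complex conjugation) is determined by its integrals against all continuous characters of finite order. Concretely, write $\Lambda_G = \varprojlim_H \mathbb{D}_p[G/H]$ over open subgroups $H$. For each finite abelian quotient $G/H$, the characters of $G/H$ span the $\mathbb{C}_p$-dual of $\mathbb{C}_p[G/H]$ by orthogonality for finite abelian groups, so the image of $\mu$ in $\mathbb{D}_p[G/H]$ is determined by (and vanishes under) its character integrals. Assembling across all $H$ yields $\mu = 0$.

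The main technical point, which I view as the only real obstacle, is verifying that the family of characters appearing in Proposition~\ref{p2.} really exhausts the finite-order continuous characters of the relevant quotient group $G/\langle\sigma\rangle$. After modding out by complex conjugation, $G/\langle\sigma\rangle$ is essentially the cyclotomic Galois group $\gal(\mathbb{Q}(\mu_{p^\infty})/\mathbb{Q}) \cong \mathbb{Z}_p^\times$, whose continuous finite-order characters are exactly the even Dirichlet characters of $p$-power conductor (the "even" condition matching triviality on $\sigma$). With this identification in hand, the characters tested are separating on $\Lambda_G$, so $\mu = \lambda_1 - \lambda_2\cdot \lambda_3 = 0$, which proves the theorem.
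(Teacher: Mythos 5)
Your proposal is correct and follows essentially the same route as the paper, which likewise deduces $\lambda_1=\lambda_2\cdot\lambda_3$ directly from the identity $\langle\chi,\lambda_1\rangle=\langle\chi,\lambda_2\rangle\langle\chi,\lambda_3\rangle$ of Proposition~\ref{p2.} together with the fact that the finite even characters of $p$-power conductor separate measures; you merely spell out the separation step (orthogonality on finite quotients) that the paper leaves implicit. The only caveat is that, as in Gross, the $\lambda_i$ a priori live in $\Lambda_G'$ rather than $\Lambda_G$, so the difference $\lambda_1-\lambda_2\lambda_3$ should be cleared of denominators before applying the separation argument, but this does not affect the substance.
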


\subsection{Generalization to conductor not a prime power}
We will now work more generally and extend to the remaining cases. We fix $f$ to be some positive integer with at least two distinct prime divisors. We again start with the classical factorization $L(s,(\chi_K)_{\infty})=L(s,\chi_{\infty}\epsilon)L(s,\chi_{\infty})$
and the functional equation for $L(s,\chi)$: $$L(s,\chi)=L(1-s,\overline{\chi})\frac{\Gamma\left(\frac{1-s+a}{2}\right)}{\Gamma\left(\frac{s+a}{2}\right)}\left(\frac{f}{\pi}\right)^{\frac{1}{2}-s}\frac{\tau(\chi)}{i^a\sqrt{f}},$$ where $\Gamma$ is the gamma function, $f$ is the conductor of $\chi$, $\tau=\sum_{n=1}^{f}\chi(n)e^{2\pi i n/f}$ is the Gauss sum, and $a=0$ if $\chi(-1)=1$ while $a=1$ if $\chi(-1)=-1$. 

\begin{prop}
\label{p2.7}
$L'(0,(\chi_K)_{\infty})=L(0,\chi_{\infty}\epsilon ) L'(0,\chi_{\infty})$.
\end{prop}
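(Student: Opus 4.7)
The plan is to repeat the strategy used in the proof of Proposition~\ref{p2.1}, observing that the argument there did not actually exploit the fact that the conductor was a prime power; only the parity of $\chi_\infty$ and the structure of the functional equation enter. So I would proceed as follows.

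First, I would differentiate the classical factorization $L(s,(\chi_K)_\infty) = L(s,\chi_\infty\epsilon) L(s,\chi_\infty)$ using the product rule, obtaining
\[
L'(s,(\chi_K)_\infty) = L'(s,\chi_\infty\epsilon)L(s,\chi_\infty) + L(s,\chi_\infty\epsilon)L'(s,\chi_\infty),
\]
and then specialize to $s=0$. The goal then reduces to showing that the first summand vanishes, i.e. $L(0,\chi_\infty)=0$.

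Second, I would invoke the functional equation stated just before the proposition,
\[
L(s,\chi_\infty) = L(1-s,\overline{\chi_\infty})\,\frac{\Gamma\!\left(\tfrac{1-s+a}{2}\right)}{\Gamma\!\left(\tfrac{s+a}{2}\right)}\,\left(\frac{f}{\pi}\right)^{\!\frac{1}{2}-s}\,\frac{\tau(\chi_\infty)}{i^a\sqrt{f}},
\]
with $f$ the conductor of $\chi_\infty$. Since $\chi$ is even, $\chi_\infty(-1)=1$, so $a=0$. Substituting $s=0$ produces the factor $\Gamma(\tfrac12)/\Gamma(0)$. Using the Laurent expansion $\Gamma(s)=\tfrac{1}{s}+a_0+a_1 s+\cdots$, we have $1/\Gamma(0)=0$, forcing $L(0,\chi_\infty)=0$. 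The residual term $L(0,\chi_\infty\epsilon) L'(0,\chi_\infty)$ then matches the desired right-hand side.

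There is essentially no obstacle here: the prime-power assumption on the conductor played no role in the proof of Proposition~\ref{p2.1}; the vanishing $L(0,\chi_\infty)=0$ is a purely archimedean statement that depends only on $\chi_\infty$ being a nontrivial even Dirichlet character. The only mild bookkeeping point is to confirm that the non-$\Gamma$ factors in the functional equation (namely $L(1,\overline{\chi_\infty})$, $\tau(\chi_\infty)$, and the power of $f/\pi$) are all finite at $s=0$ for $\chi_\infty$ nontrivial, so that the zero from $1/\Gamma(0)$ is not cancelled. This is immediate from standard facts about Dirichlet $L$-functions and Gauss sums of nontrivial characters, after which the proposition follows exactly as in the prime-power case.
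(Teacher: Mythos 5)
Your proposal is correct and follows essentially the same route as the paper's own proof: differentiate the classical factorization, set $s=0$, and use the functional equation with $a=0$ together with $1/\Gamma(0)=0$ to kill the term containing $L(0,\chi_\infty)$. The extra remark about the finiteness of the non-$\Gamma$ factors is a harmless addition beyond what the paper records.
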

\begin{proof}
By differentiating, $$L'(s,(\chi_K)_\infty)=L'(s,\chi_\infty \epsilon)L(s,\chi_\infty)+L(s,\chi_\infty\epsilon)L'(s,\chi_\infty).$$ Setting $s=0$ yields $$L'(0,(\chi_K)_\infty)=L'(0,\chi_\infty \epsilon)L(0,\chi_\infty)+L(0,\chi_\infty\epsilon)L'(0,\chi_\infty).$$ But since $\chi_\infty(-1)=1\implies a=0$, we have $$L(0,\chi_\infty)=L(1,\overline{\chi_\infty})\frac{\Gamma\left(\frac{1}{2}\right)}{\Gamma(0)}\left(\frac{f}{\pi}\right)^{1/2}\frac{\tau(\chi)}{\sqrt{f}}.$$ Since $\frac{1}{\Gamma(0)}=0$, which concludes the result.
\end{proof}

We have the following explicit formulas:
\begin{itemize}
    \item $L(0,\chi)=-\sum_{a=1}^{f}\frac{a}{f}\chi(a)=-B_{1,\chi}$ \cite[p.~88]{gross1980factorization},
    \item $L(1,\chi_\infty)=-\frac{\tau(\chi_\infty)}{f}\sum_A \chi_{\infty}^{-1}(a)\log C^+(a)_\infty$ \cite[p.~88]{gross1980factorization} (note that this is for general conductors $f$),
    \item $L'(0,\chi)=-\frac{1}{6fw(\mathfrak{f})}\sum_{c\in\mathcal{C}}\chi(c)\log|E(c)|$ \cite[p.~281]{stark1977class}, where $\chi$ is a ray class character modulo $\mathfrak{f}$, $w(\mathfrak{f})$ is the number of roots of unity equivalent to $1$ mod $\mathfrak{f}$, and $f$ is the conductor of $\chi$.
\end{itemize} Note that $Cl(f)=(\mathbb{Z}/f\mathbb{Z})^\times$, but $A=\text{Gal}(\mathbb{Q}(\cos\frac{2\pi}{f})/\mathbb{Q})=(\mathbb{Z}/f\mathbb{Z})^\times/\pm 1$.

\begin{prop}
\label{p2.8}
$L'(0,\chi_{\infty})=-\frac{1}{2}\sum_A \chi_\infty(a)\log C^+(a)_\infty$.
\end{prop}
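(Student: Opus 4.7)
The plan is to repeat, almost verbatim, the derivation from the conductor-$p^r$ case leading to the displayed identity for $L'(0,\chi_\infty)$ just before Proposition~\ref{p2.4}. The only properties used there were that $\chi_\infty(-1)=1$ (so that the parameter $a$ in the functional equation is $0$), the functional equation for $L(s,\chi)$, and the Kronecker-style explicit formula for $L(1,\chi_\infty)$. All three ingredients remain available in the conductor-$f$ setting, so the argument goes through with $p^r$ replaced by $f$ throughout.

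More concretely, I would first re-derive the analogue of Proposition~\ref{p2.2}: differentiate
$$L(s,\chi_\infty)=L(1-s,\overline{\chi_\infty})\frac{\Gamma\bigl(\frac{1-s}{2}\bigr)}{\Gamma\bigl(\frac{s}{2}\bigr)}\left(\frac{f}{\pi}\right)^{\frac{1}{2}-s}\frac{\tau(\chi_\infty)}{\sqrt{f}}$$
(specializing $a=0$ by the even hypothesis), evaluate at $s=0$, and observe that the three terms containing a bare factor $1/\Gamma(0)$ drop out, leaving only the contribution from $\Gamma'(0)/\Gamma(0)^2=-1$. This gives
$$L'(0,\chi_\infty)=-\frac{g(\chi_\infty)}{2}\,L(1,\overline{\chi_\infty}),$$
with the literal proof of Proposition~\ref{p2.2} carried over verbatim (no prime-power hypothesis was ever used there).

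The next step is to substitute the explicit formula from the second bullet, namely
$$L(1,\overline{\chi_\infty})=-\frac{\tau(\overline{\chi_\infty})}{f}\sum_{A}\chi_\infty(a)\log C^+(a)_\infty,$$
and then simplify the resulting coefficient $g(\chi_\infty)\tau(\overline{\chi_\infty})/f=\tau(\chi_\infty)\tau(\overline{\chi_\infty})/f^{2}$ using the standard Gauss-sum identity $\tau(\chi_\infty)\tau(\overline{\chi_\infty})=\chi_\infty(-1)f=f$, valid for any primitive Dirichlet character regardless of the shape of its conductor. Plugging in yields a clean scalar in front of the sum $\sum_A \chi_\infty(a)\log C^+(a)_\infty$, which is precisely the claimed right-hand side once the signs are collected.

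I don't expect a genuine obstacle: every step is either a transcription of the argument given in the $p^r$ case or an appeal to a standard fact about primitive Gauss sums. The one point worth double-checking is bookkeeping of the signs accumulated from $-g(\chi_\infty)/2$ and $-\tau(\overline{\chi_\infty})/f$ together with $\tau(\chi_\infty)\tau(\overline{\chi_\infty})=f$, which determines the sign in the final display and must be tracked carefully to match the statement of the proposition.
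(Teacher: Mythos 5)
Your overall strategy is viable and close in spirit to the paper's, but as written the constants do not come out, and the discrepancy is not merely a sign to be ``collected'': it involves a stray factor of $f$. If you take Proposition~\ref{p2.2} literally, i.e. $L'(0,\chi_\infty)=-\frac{g(\chi_\infty)}{2}L(1,\overline{\chi_\infty})$ with $g(\chi_\infty)=\tau(\chi_\infty)/f$, and substitute $L(1,\overline{\chi_\infty})=-\frac{\tau(\overline{\chi_\infty})}{f}\sum_A\chi_\infty(a)\log C^+(a)_\infty$ together with $\tau(\chi_\infty)\tau(\overline{\chi_\infty})=f$, the coefficient you get is $+\frac{1}{2f}$, not the claimed $-\frac{1}{2}$. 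The root of the problem is that the stated form of Proposition~\ref{p2.2} is itself inconsistent with the functional equation you display (which carries $\tau(\chi_\infty)$, not $g(\chi_\infty)$): redoing the differentiation honestly, the only surviving term at $s=0$ is $-\frac{1}{2}\cdot\frac{\Gamma'(0)}{\Gamma(0)^2}\,L(1,\overline{\chi_\infty})\,\Gamma\bigl(\tfrac{1}{2}\bigr)\bigl(\tfrac{f}{\pi}\bigr)^{1/2}\frac{\tau(\chi_\infty)}{\sqrt f}=+\frac{\tau(\chi_\infty)}{2}L(1,\overline{\chi_\infty})$, since $\Gamma'(0)/\Gamma(0)^2=-1$ and $\Gamma(1/2)(f/\pi)^{1/2}/\sqrt f=1$. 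With this corrected intermediate identity your second step works perfectly: $\frac{\tau(\chi_\infty)}{2}\cdot\bigl(-\frac{\tau(\overline{\chi_\infty})}{f}\bigr)\sum_A\chi_\infty(a)\log C^+(a)_\infty=-\frac{1}{2}\sum_A\chi_\infty(a)\log C^+(a)_\infty$, as required. So the gap is small but real: you cannot simply ``carry over Proposition~\ref{p2.2} verbatim''; you must restate it with $\tau(\chi_\infty)$ and the correct sign.

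For comparison, the paper's proof of this proposition avoids differentiating the four-factor product altogether: it multiplies the functional equation through by $\Gamma(s/2)$, notes that $L(s,\chi_\infty)$ vanishes at $s=0$ while $\Gamma(s/2)$ has a simple pole of residue $2$, and reads off $2L'(0,\chi_\infty)=\tau(\chi_\infty)L(1,\chi_\infty^{-1})$ in one stroke before substituting Gross's formula. That route makes the constant $\tau(\chi_\infty)/2$ (with its plus sign) manifest and is less prone to exactly the bookkeeping slip that trips up your write-up.
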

\begin{proof}
First note that the classical factorization (with value $a=0$) yields $$L(s,\chi_\infty)\Gamma(s/2)=L(1-s,\chi_\infty^{-1})\Gamma((1-s)/2)\left(\frac{f}{\pi}\right)^{\frac{1}{2}-s}\frac{\tau(\chi_\infty)}{\sqrt{f}}.$$ In particular, consider the left hand side's power series expansion around $s=0$: although $L(s,\chi_\infty)$ vanishes at $s=0$, $\Gamma(0)$ has a pole of order $1$. But since the residue of $\Gamma(s/2)$ is $2$, we have that $$2L'(0,\chi_\infty)=L(1,\chi_\infty^{-1})\Gamma(1/2)\sqrt{f}\sqrt{1/\pi}\frac{\tau(\chi_\infty)}{\sqrt{f}}=\tau(\chi_{\infty})L(1,\chi_\infty^{-1}).$$ Now using Gross's formula, we find that $$L'(0,\chi_\infty)=-\frac{1}{2}\sum_A\chi_\infty (a)\log C^+(a).$$
\end{proof}

\begin{prop}
\label{p2.9}
$L'(0,(\chi_K)_\infty)=-\frac{1}{6f}\sum_A\chi(a)\log F^+(a)$.
\end{prop}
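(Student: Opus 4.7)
The plan is to apply Stark's explicit formula for $L'(0,\chi)$ (the third bullet above, from \cite{stark1977class}) directly to $\chi_K$ viewed as a ray class character of $K$. Since $\chi_K$ is the restriction of the Dirichlet character $\chi$ of conductor $f$, its conductor over $K$ is a divisor of $f\mathcal{O}_K$, and its numerical conductor remains $f$; for $f\ge 3$ the only unit of $\mathcal{O}_K^{\times}$ congruent to $1$ modulo $\mathfrak{f}$ is $1$ itself, so $w(\mathfrak{f})=1$. Stark's formula then gives
\[
L'(0,\chi_K) \;=\; -\frac{1}{6f}\sum_{c\in\mathcal{C}}\chi_K(c)\log|E(c)|,
\]
where $\mathcal{C}$ is the ray class group of $K$ modulo $\mathfrak{f}$ and $\{E(c)\}$ are the associated Stark units. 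To match this with the proposition, I would identify the ray classes $c$ with the elements of $A=(\mathbb{Z}/f\mathbb{Z})^{\times}/\pm 1$ via the reciprocity map (so that $\chi_K(c)=\chi(a)$ for the corresponding $a$), and identify $E(c)$ with Gross's $F^+(a)$; both identifications follow the prime-power recipe in \cite[\S1]{gross1980factorization}, which extends to general conductor with no new input.

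A complementary, almost self-contained route is to combine Proposition \ref{p2.7}, Proposition \ref{p2.8} and the Dirichlet formula $L(0,\chi_\infty\epsilon)=-B_{1,\chi_\infty\epsilon}$ to obtain
\[
L'(0,(\chi_K)_\infty) \;=\; \tfrac{1}{2}B_{1,\chi_\infty\epsilon}\sum_A\chi_\infty(a)\log C^+(a)_\infty.
\]
In this formulation, Proposition \ref{p2.9} becomes equivalent to the identity
\[
-3fB_{1,\chi_\infty\epsilon}\sum_A\chi_\infty(a)\log C^+(a)_\infty \;=\; \sum_A\chi_\infty(a)\log F^+(a)_\infty,
\]
i.e.\ the direct analogue of Proposition \ref{p2.3} with $p^r$ replaced by $f$. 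By the same regular-representation argument used in \S2.1, both sums lie in the one-dimensional $\chi_\infty^{-1}$-eigenspace of $\mathbb{C}\otimes_{\mathbb{Z}}E(M_f)$, hence are proportional; Stark's formula pins the scalar to $-3fB_{1,\chi_\infty\epsilon}$.

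The main obstacle is not analytic but notational: verifying that the ray class group of $K$ modulo $\mathfrak{f}$ maps onto $A$ compatibly with $\chi_K\leftrightarrow\chi$, and that Stark's $E(c)$ agrees with Gross's $F^+(a)$, requires a careful unpacking of definitions. Once those identifications are in place no new analytic ingredient beyond Stark's formula is needed, and the proof mirrors the prime-power derivation of Proposition \ref{p2.3} given in \S2.1.
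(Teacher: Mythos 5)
Your first route is essentially the paper's own proof: it applies Stark's formula with $w(\mathfrak{f})=1$ and collapses the ray-class sum to a sum over $A$, the only cosmetic difference being that the paper makes the unit identification precise via the $2$-to-$1$ pairing $\log F^+(a)=\log|E(c)|+\log|E(-c)|$ rather than a literal equality of $E(c)$ with $F^+(a)$. Your second route adds nothing independent, as you yourself note: the eigenspace argument only gives proportionality of the two sums, and pinning the scalar to $-3fB_{1,\chi_\infty\epsilon}$ still requires exactly the Stark input of the first route.
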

\begin{proof}
Note that we have a quotient homomorphism between the ray class group $\mathcal{C}$ modulo $\mathfrak{f}$, isomorphic to $(\mathbb{Z}/f\mathbb{Z})^\times$, with $A=\text{Gal}(K/\mathbb{Q})=(\mathbb{Z}/f\mathbb{Z})^\times/\pm 1$. Furthermore, $w(\mathfrak{f})=1$ since $K$ is totally real. Then \cite{stark1977class} gives the result $$L'(0,(\chi_K)_\infty)=-\frac{1}{6f}\sum_\mathcal{C}\chi_\infty(c)\log|E(c)|.$$ But note that $$|E(c)|^2=E(c)E(-c)=F_f(a)\overline{F_f(a)}=F_f(a)F_f(-a)$$$$\implies \log F^+(a)=\log F_f(a)F_f(-a)=\log E(a)\overline{E(a)}=\log |E(a)|+\log |E(-a)|,$$ so we have $$L'(0,(\chi_K)_\infty)=-\frac{1}{6f}\sum_A\chi_\infty(a)\log F^+(a).$$
\end{proof}

Now, combining these with the fact that $L(0,\chi_\infty \epsilon)=-B_{1,\chi_\infty \epsilon}$, we find that $$-\frac{1}{6f}\sum_A \chi_\infty(a)\log F^+(a)_\infty=(-B_{1,\chi_\infty \epsilon})\left(-\frac{1}{2}\sum_A \chi_\infty (a)\log C^+(a)_\infty\right),$$ or equivalently
\begin{prop}
\label{p2.10}
The equation $$-3f B_{1,\chi_{\infty}\epsilon}\sum_{A}\chi_{\infty}(a)\log C^+(a)_\infty = \sum_A \chi_{\infty}(a)\log F^+(a)_\infty$$ holds for all $f$.
\end{prop}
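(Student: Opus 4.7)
The plan is to obtain Proposition~\ref{p2.10} as an immediate algebraic consequence of Propositions~\ref{p2.7}, \ref{p2.8}, and \ref{p2.9}, combined with the classical evaluation $L(0,\chi_\infty\epsilon) = -B_{1,\chi_\infty\epsilon}$. All the analytic and arithmetic work has already been done in those three results; here I only need to collate them.

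First, I would rewrite the factorization identity in Proposition~\ref{p2.7}, namely $L'(0,(\chi_K)_\infty) = L(0,\chi_\infty\epsilon)\,L'(0,\chi_\infty)$, by replacing each factor with its explicit expression. The left side becomes $-\tfrac{1}{6f}\sum_A \chi_\infty(a)\log F^+(a)_\infty$ by Proposition~\ref{p2.9}. For the right side, Proposition~\ref{p2.8} gives $L'(0,\chi_\infty) = -\tfrac{1}{2}\sum_A \chi_\infty(a)\log C^+(a)_\infty$, and the Bernoulli number identity gives $L(0,\chi_\infty\epsilon) = -B_{1,\chi_\infty\epsilon}$, so the right side equals $\tfrac{1}{2}B_{1,\chi_\infty\epsilon}\sum_A \chi_\infty(a)\log C^+(a)_\infty$.

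Equating the two expressions and clearing denominators by multiplying through by $-6f$ yields exactly
\[
-3f\,B_{1,\chi_\infty\epsilon}\sum_A \chi_\infty(a)\log C^+(a)_\infty = \sum_A \chi_\infty(a)\log F^+(a)_\infty,
\]
which is the proposition. There is no real obstacle in this step: the substitution is mechanical, and the key conceptual contribution lies upstream, in the derivation of Proposition~\ref{p2.9} (applying Stark's class number formula in the totally real setting, using that $w(\mathfrak{f})=1$ and passing from the ray class group $(\mathbb{Z}/f\mathbb{Z})^\times$ to $A=(\mathbb{Z}/f\mathbb{Z})^\times/\{\pm 1\}$) and of Proposition~\ref{p2.8} (extracting the residue $2$ of $\Gamma(s/2)$ at $s=0$ in the functional equation to relate $L'(0,\chi_\infty)$ to $L(1,\chi_\infty^{-1})$ and hence to $\sum_A \chi_\infty(a)\log C^+(a)_\infty$). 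Once those are in hand, Proposition~\ref{p2.10} is just bookkeeping, and importantly the argument makes no use of whether $p$ splits or of whether $f$ is a prime power, which is precisely the generalization being sought.
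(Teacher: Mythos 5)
Your proposal is correct and is exactly the paper's argument: the paper also substitutes the explicit formulas of Propositions~\ref{p2.8} and \ref{p2.9} together with $L(0,\chi_\infty\epsilon)=-B_{1,\chi_\infty\epsilon}$ into the factorization of Proposition~\ref{p2.7} and clears denominators, stating Proposition~\ref{p2.10} as the equivalent rearrangement. The algebra in your final step checks out.
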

\begin{remark}
This is \cite[(3.5)]{gross1980factorization}, but he only proves it for split $p$ and $f=p^r$. Note that $\chi_\infty$ depends on $f$.
\end{remark}

Now note that $C^+(a)_\infty$ and $F^+(a)_\infty$ are units in the field $M_f=\mathbb{Q}\left(\cos \frac{2\pi}{f}\right)$. (In particular, $$\prod_{a\in(\mathbb{Z}/f\mathbb{Z})^\times/\pm 1}(1-e^{2\pi ia/f})=\Phi_f(1)=1,$$ which holds whenever $f$ has at least two distinct prime divisors.) Let $E(M_f)$ denote the group of all units. It is a finitely generated subgroup of $\mathbb{R}^\times$ of rank $|A|-1$, by Dirichlet's unit theorem. Now consider the complex vector space $\mathbb{C}\otimes_\mathbb{Z}E(M_f)$. This is isomorphic to the quotient of the regular representation of $A=\text{Gal}(M_f/\mathbb{Q})$ by the subspace spanned by $(1,1,1,\dots)$. For all $\sigma \in A$, due to transport of structure, we have that 
\begin{align*}
\sigma\left(\sum_A \chi_\infty (a)\otimes_{\mathbb{Z}}C^+(a)_\infty\right)&=\sum_{A}\chi_{\infty}(a)\otimes_{\mathbb{Z}}C^+(\sigma a)_{\infty}=\chi_{\infty}^{-1}(\sigma)\sum_A \chi_{\infty}(\sigma a)\otimes_{\mathbb{Z}}C^+(\sigma a)_{\infty},\\
&=\chi_{\infty}^{-1}(\sigma)\left(\sum_A \chi_\infty (a)\otimes_{\mathbb{Z}}C^+(a)_\infty\right),\\
\sigma\left(\sum_A \chi_\infty (a)\otimes_{\mathbb{Z}}F^+(a)_\infty\right)&=\sum_{A}\chi_{\infty}(a)\otimes_{\mathbb{Z}}F^+(\sigma a)_{\infty}=\chi_{\infty}^{-1}(\sigma)\sum_A \chi_{\infty}(\sigma a)\otimes_{\mathbb{Z}}F^+(\sigma a)_{\infty},\\
&=\chi_{\infty}^{-1}(\sigma)\left(\sum_A \chi_\infty (a)\otimes_{\mathbb{Z}}F^+(a)_\infty\right).\\
\end{align*}
This implies that both $\sum_A \chi_\infty (a)\otimes_{\mathbb{Z}}C^+(a)_\infty$ and $\sum_A \chi_\infty (a)\otimes_{\mathbb{Z}}F^+(a)_\infty$ lie in the $\chi_{\infty}^{-1}$-eigenspace of $\mathbb{C}\otimes_{\mathbb{Z}}E(M_f)$, which is one-dimensional since $A$ is abelian. Therefore $$\tilde{c}\sum_A \chi_\infty (a)\otimes_{\mathbb{Z}}C^+(a)_\infty=\sum_A \chi_\infty (a)\otimes_{\mathbb{Z}}F^+(a)_\infty$$ for some $\tilde{c}\in\mathbb{C}$. Consider the map $$\gamma :\mathbb{C}\otimes_{\mathbb{Z}} E(M_f)\rightarrow \mathbb{C},$$ defined by $\gamma(c\otimes a)=c\log a$. This map is clearly $\mathbb{C}$-linear, so $$\gamma(\tilde{c}\sum_A \chi_\infty (a)\otimes_{\mathbb{Z}}C^+(a)_\infty)=\tilde{c}\gamma(\sum_A \chi_\infty (a)\otimes_{\mathbb{Z}}C^+(a)_\infty).$$ Applying $\gamma$ to both sides of $\tilde{c}\sum_A \chi_\infty (a)\otimes_{\mathbb{Z}}C^+(a)_\infty=\sum_A \chi_\infty (a)\otimes_{\mathbb{Z}}F^+(a)_\infty$ yields that $$\tilde{c}=-3f B_{1,\chi_{\infty}\epsilon}.$$ In particular, note that $E(M_f)\subset \overline{\mathbb{Q}}$. We can actually say that
\begin{prop}
\label{p2.11}
As elements of $\overline{\mathbb{Q}}\otimes_{\mathbb{Z}} E(M_f)$, we have $$-3fB_{1,\chi_{\infty}\epsilon}\sum_A \chi_\infty (a)\otimes_{\mathbb{Z}}C^+(a)_\infty=\sum_A \chi_\infty (a)\otimes_{\mathbb{Z}}F^+(a)_\infty.$$
\end{prop}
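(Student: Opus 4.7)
The plan is to transplant the $\mathbb{C}$-linear argument sketched in the paragraph immediately preceding the statement down to the level of $\overline{\mathbb{Q}} \otimes_{\mathbb{Z}} E(M_f)$. Since $\chi_\infty$ takes values in $\overline{\mathbb{Q}}$, and since $-3f B_{1,\chi_\infty\epsilon}$ is already a $\overline{\mathbb{Q}}$-number, both sides of the asserted equality live naturally in $\overline{\mathbb{Q}} \otimes_{\mathbb{Z}} E(M_f)$; the claim is simply that the proportionality constant which we derived over $\mathbb{C}$ actually witnesses an identity in this smaller tensor product.

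First I would analyze $\overline{\mathbb{Q}} \otimes_{\mathbb{Z}} E(M_f)$ as a module for the group ring $\overline{\mathbb{Q}}[A]$, with $A = \gal(M_f/\mathbb{Q})$. By Dirichlet's unit theorem (applied to the totally real field $M_f$, recalling that $f$ has at least two distinct prime divisors so that $\Phi_f(1)=1$ gives that the cyclotomic elements are genuine units) $E(M_f)$ has rank $|A|-1$. Combined with the fact that $\overline{\mathbb{Q}}[A]$ is the regular representation of the abelian group $A$ — the sum of each one-dimensional character with multiplicity one — the module $\overline{\mathbb{Q}} \otimes_{\mathbb{Z}} E(M_f)$ is isomorphic to the regular representation with the trivial character removed, so its $\chi_\infty^{-1}$-isotypic component is one-dimensional over $\overline{\mathbb{Q}}$ for every nontrivial character $\chi_\infty^{-1}$ of $A$.

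Next, the transport-of-structure calculation displayed just before the statement shows that $\sum_A \chi_\infty(a) \otimes C^+(a)_\infty$ and $\sum_A \chi_\infty(a) \otimes F^+(a)_\infty$ both transform as $\sigma \mapsto \chi_\infty^{-1}(\sigma)$ and hence sit inside this one-dimensional $\chi_\infty^{-1}$-eigenspace. Therefore there exists $\tilde c \in \overline{\mathbb{Q}}$ with
\[
\tilde c \sum_A \chi_\infty(a) \otimes_{\mathbb{Z}} C^+(a)_\infty \;=\; \sum_A \chi_\infty(a) \otimes_{\mathbb{Z}} F^+(a)_\infty
\]
in $\overline{\mathbb{Q}} \otimes_{\mathbb{Z}} E(M_f)$. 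To identify $\tilde c$, I would fix an embedding $\overline{\mathbb{Q}} \hookrightarrow \mathbb{C}$ and apply the $\overline{\mathbb{Q}}$-linear map $\gamma : \overline{\mathbb{Q}} \otimes_{\mathbb{Z}} E(M_f) \to \mathbb{C}$, $c \otimes a \mapsto c \log a$; this reduces the statement to the complex identity $-3f B_{1,\chi_\infty\epsilon} \sum_A \chi_\infty(a) \log C^+(a)_\infty = \sum_A \chi_\infty(a) \log F^+(a)_\infty$ already established in Proposition~\ref{p2.10}, pinning down $\tilde c = -3f B_{1,\chi_\infty\epsilon}$.

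The step I expect to be most subtle is the one-dimensionality of the $\chi_\infty^{-1}$-eigenspace when $f$ is not a prime power. In the prime-power case treated earlier in the section, the ambient group was that of $p$-units, giving the full regular representation; here one only has ordinary units, so the trivial character disappears and one has to be careful that each nontrivial character still appears with multiplicity exactly one. This is where the hypothesis $\Phi_f(1)=1$ (i.e.\ $f$ has at least two distinct prime divisors) is used: it guarantees that the $C^+(a)_\infty$ and $F^+(a)_\infty$ really are units of $M_f$, so that the argument stays inside $E(M_f)$ rather than the larger $p$-unit group, and Dirichlet's unit theorem gives the required rank. A minor degenerate case — when $\sum_A \chi_\infty(a) \otimes C^+(a)_\infty$ happens to vanish in $\overline{\mathbb{Q}} \otimes_{\mathbb{Z}} E(M_f)$ — is harmless: applying $\gamma$ forces $\sum_A \chi_\infty(a) \otimes F^+(a)_\infty$ to vanish as well, and both sides of the target equality are zero.
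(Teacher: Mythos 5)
Your argument is correct and is essentially the paper's: both place the two sums in the one-dimensional $\chi_\infty^{-1}$-eigenspace, deduce a proportionality constant $\tilde c$, and identify it by applying the logarithm map $\gamma$ together with Proposition~\ref{p2.10}; running the eigenspace argument directly over $\overline{\mathbb{Q}}[A]$ rather than over $\mathbb{C}$ followed by descent is an immaterial variation. Your closing aside on the degenerate case is unnecessary (and, as stated, does not quite follow, since $\gamma$ need not be injective on the eigenspace): the case never occurs, because $\gamma\bigl(\sum_A \chi_\infty(a)\otimes C^+(a)_\infty\bigr)$ is a nonzero multiple of $L(1,\chi_\infty^{-1})\ne 0$, which is also exactly the nonvanishing that legitimizes dividing by it to pin down $\tilde c$ in both your argument and the paper's.
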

Now take some $\varphi: \overline{\mathbb{Q}}\xhookrightarrow{}\mathbb{C}_p$. Applying $(1\otimes_\mathbb{Z}\log_p)\circ\varphi $ to both sides, we find the following equality in $\mathbb{C}_p$: $$-3fB_{1,\chi\epsilon}\sum_A \chi (a)\log_p C^+(a)_p=\sum_A \chi (a)\log_p F^+(a)_p.$$
Now consider the explicit formulas provided by \cite[p.~93]{gross1980factorization}:
\begin{align*}
L_p(0,\chi_K)&=-\frac{1}{3f}g(\chi^{-1})\sum_A \chi(a)\log_p F^+(a)_p,\\
L_p(0,\chi \epsilon \omega) &= -B_{1,\chi\epsilon},\\
L_p(1,\chi^{-1})&=-g(\chi^{-1})\sum_A\chi(a)\log_p C^+(a)_p.\\
\end{align*}
Putting these together yields the $p$-adic identity
\begin{prop}
\label{p2.12}
$$L_p(0,\chi_K)=L_p(0,\chi\epsilon\omega)L_p(1,\chi^{-1}).$$
\end{prop}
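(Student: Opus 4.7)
The plan is to obtain Proposition~\ref{p2.12} as an essentially formal consequence of the $\mathbb{C}_p$-identity
$$-3fB_{1,\chi\epsilon}\sum_A \chi (a)\log_p C^+(a)_p=\sum_A \chi (a)\log_p F^+(a)_p$$
that was just derived from Proposition~\ref{p2.11} by applying $(1\otimes_\mathbb{Z}\log_p)\circ\varphi$. All that remains is to replace each of the two sums of $\log_p$'s and the Bernoulli factor by their expressions in terms of $L_p$-values, using the three explicit formulas of \cite[p.~93]{gross1980factorization} listed immediately above the statement.

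Concretely, I would multiply both sides of the displayed identity by $-\frac{1}{3f}g(\chi^{-1})$. The right-hand side then becomes exactly $L_p(0,\chi_K)$ by the first Gross formula. The left-hand side becomes $g(\chi^{-1})B_{1,\chi\epsilon}\sum_A\chi(a)\log_p C^+(a)_p$, and combining the third Gross formula $L_p(1,\chi^{-1})=-g(\chi^{-1})\sum_A\chi(a)\log_p C^+(a)_p$ with the second formula $L_p(0,\chi\epsilon\omega)=-B_{1,\chi\epsilon}$ rewrites this as $-B_{1,\chi\epsilon}L_p(1,\chi^{-1})=L_p(0,\chi\epsilon\omega)L_p(1,\chi^{-1})$. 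Equating the two sides gives the claimed factorization.

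The substantive content is therefore entirely in Proposition~\ref{p2.11} and in having Gross's explicit formulas available at a conductor $f$ which need not be a prime power; once those are in hand, the proof is a one-line algebraic manipulation exactly parallel to what precedes Theorem~\ref{t2.6}. The main obstacle I anticipate is purely a bookkeeping one: checking that the three explicit formulas from \cite{gross1980factorization}, which are stated there in the case $f=p^r$, carry over verbatim (with the $p^r$ replaced by $f$) to the present setting. Since the only difference in the derivation is the replacement of the regular representation of $A$ by its quotient modulo the line spanned by $(1,1,\dots,1)$—already accommodated by working with units rather than $p$-units, as in Proposition~\ref{p2.11}—no new analytic input is needed, and the $p$-adic identity follows.
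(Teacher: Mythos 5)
Your proposal is correct and matches the paper's own argument: the paper likewise takes the $\mathbb{C}_p$-identity obtained from Proposition~\ref{p2.11} via $(1\otimes_\mathbb{Z}\log_p)\circ\varphi$ and substitutes Gross's three explicit formulas (with $p^r$ replaced by $f$) to conclude, exactly as in your one-line manipulation. No issues to flag.
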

Now, following \cite[p.~93]{gross1980factorization} there exist measures $\lambda_2,\lambda_3$ such that for any finite even Dirichlet character $\chi$ of conductor $f$, we have 
\begin{align*}
\langle \chi, \lambda_2\rangle &= L_p(0,\chi\epsilon\omega),\\
\langle \chi,\lambda_3\rangle &=L_p(1,\chi^{-1}).\\
\end{align*}
Now define a measure $\lambda_1$ given by $$\langle \chi, \lambda_1\rangle = L_p(0,\chi_K).$$ Then we have the equality $$\langle \chi, \lambda_1\rangle = \langle \chi,\lambda_2\rangle \langle \chi,\lambda_3\rangle$$ for all finite even Dirichlet characters $\chi$ with conductor $f$. Combining with Theorem~\ref{t2.6}, we have the following factorization:
\begin{theorem}
\label{t2.13}
For any finite even Dirichlet character $\chi$, with $\langle \chi, \lambda_1\rangle = L_p(0,\chi_K)$, $\langle \chi, \lambda_2\rangle = L_p(0,\chi\epsilon\omega)$, and $\langle \chi,\lambda_3 \rangle=L_p(1,\chi^{-1})$, we have that $\lambda_1=\lambda_2\cdot\lambda_3$.
\end{theorem}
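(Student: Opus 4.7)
The plan is to combine the two cases already established and then invoke a standard uniqueness principle for measures on profinite groups. Theorem~\ref{t2.6} handles finite even Dirichlet characters $\chi$ whose conductor is a prime power $p^r$, while Proposition~\ref{p2.12} (together with the machinery of Propositions~\ref{p2.7}--\ref{p2.11}) handles the case where the conductor $f$ has at least two distinct prime divisors. Since every finite even Dirichlet character falls into exactly one of these two classes, we obtain the identity
\[
\langle \chi,\lambda_1\rangle \;=\; \langle \chi,\lambda_2\rangle\cdot\langle \chi,\lambda_3\rangle
\]
for \emph{all} finite even Dirichlet characters $\chi$ of $\gal(K(\mu_{p^\infty})/\mathbb{Q})$.

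Next, I would upgrade this pointwise identity on characters to an equality of measures. The idea is that the measures $\lambda_1$ and $\lambda_2\cdot\lambda_3$ both live in $\Lambda_G$ for $G=\gal(K(\mu_{p^\infty})/\mathbb{Q})$ (modulo complex conjugation, as set up in \S\ref{subsec:measures}), and the continuous $\mathbb{D}_p^\times$-valued characters of this group are topologically generated by the finite even Dirichlet characters appearing above. Since the pairing $\langle\,\cdot\,,\,\cdot\,\rangle$ with $\Lambda_G$ is continuous in the first argument, and since these characters separate points of $\Lambda_G$ (this is the standard Mahler/Amice-transform fact that a measure on a profinite abelian group is determined by its integrals against continuous characters), the equality of the two sides on this generating family forces $\lambda_1=\lambda_2\cdot\lambda_3$ in $\Lambda_G$.

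The main obstacle, conceptually, is the passage from ``agreement on all finite characters'' to ``agreement as measures''; in the prime-power-conductor case Gross handles this by invoking the interpolation property that defines $\lambda_2,\lambda_3$ uniquely, and one needs to check that the same interpolation property continues to uniquely characterize the measures when we also let the tame part of $\chi$ vary. Concretely, one should note that $\lambda_2$ and $\lambda_3$ are constructed in \cite{gross1980factorization} via $p$-adic $L$-functions that are \emph{a priori} defined to interpolate values at all finite even Dirichlet characters (not just those of $p$-power conductor), so the uniqueness of such interpolating measures already lives in the setup; hence once the pointwise identity is established for all conductors, the factorization $\lambda_1=\lambda_2\cdot\lambda_3$ in $\Lambda_G$ follows immediately. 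This is essentially a bookkeeping step, and it completes the proof of Theorem~\ref{t2.13}.
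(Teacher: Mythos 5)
Your proposal follows essentially the same route as the paper: the paper likewise combines the prime-power-conductor case (Theorem~\ref{t2.6}) with the composite-conductor identity of Proposition~\ref{p2.12} and the interpolation properties defining $\lambda_1,\lambda_2,\lambda_3$, and then concludes $\lambda_1=\lambda_2\cdot\lambda_3$ from the agreement of the pairings on all finite even Dirichlet characters. Your added remark that characters separate measures on $\Lambda_G$ merely makes explicit the final step the paper leaves implicit, so the two arguments coincide in substance.
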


\section{Assumptions and conventions}
\label{sec:conventions}
For the remainder of the article, we fix several assumptions. We will restate them throughout the article, but organize them here for convenience. We will let $K$ denote an imaginary quadratic field and $\Delta_K$ the discriminant of $K$. We will let $E/F$ denote an elliptic curve over a number field $F$ (usually either $K$ or $\mathbb{Q}$) with conductor $\text{cond}(E)=N$. Let $f(q)$ be the modular form associated to $E$. We assume that $E$ will be residually reducible modulo $3$:
\begin{assumption}[Residually reducible]
\label{assump:residuallyreducible}
All elliptic curves $E$ will be residually reducible modulo $3$. In other words, the $3$-adic Galois representation $\rho_3:\gal(\overline{F}/F)\rightarrow \text{Aut}(T_3(E))\cong GL_2(\mathbb{Z}_3)$ reduced modulo $3$ to $\rho_3:\gal(\overline{F}/F)\rightarrow GL_2(\mathbb{F}_3)$ is reducible.
\end{assumption}

We will also require that $E$ satisfies the Heegner hypothesis relative to $K$ in many situations, as found in \cite[p.~8]{bertolini2014p}:
\begin{assumption}[Heegner hypothesis]
\label{assump:heegner}
For every prime $\ell|N$, then $\Delta_K$ is a quadratic residue modulo $\ell$.
\end{assumption}

We will focus a great deal of attention to quadratic twists of $E/F$. Let $E$ be an elliptic curve given by $y^2=x^3+ax+b$. Then for $D'\in F$ such that $F(\sqrt{D'})\supsetneq F$, the \textbf{quadratic twist} of $E/F$ by $D'$ is given by $D'y^2=x^3+ax+b$, and denoted $E^{(D')}$ with modular form $f^{(D')}(q)$. We will primarily focus on $D'\in \mathbb{Z}$. We will later see that when $E/F$ is residually reducible, then $f^{(D')}(q)\equiv E_2^{\chi_D,\chi_D}(q)\pmod{3}$ for some Eisenstein series $E$ and integer $D$, and therefore we will denote $E_D\coloneqq E^{(D')}$.

In a similar manner, we will denote $E_{d,3}$ the \textbf{cubic twist} of $E/F$ by $d$, where the cubic twist is given by $y^2=x^3+c\mapsto y^2=x^3+dc$.

Finally, we will denote by Assumption~\ref{assump:*} the following series of assumptions on $(N,D)$.
\begin{assumption}
\label{assump:*}
We make the following assumptions:
\begin{itemize}
    \item For all primes $\ell>3$, if $v_\ell(N)=1$, then $\ell\equiv 2\pmod{3}$,
    \item $\gcd (N,D)=1$,
    \item $2\nmid ND$,
    \item $v_3(N)\ne 1$.
\end{itemize}
\end{assumption}

\section{Congruences modulo $p$}
\label{sec:congruences}
\subsection{Congruences of $L$-series and Eisenstein series}
Let $$f(q)=\sum_{n\ge 0}a_nq^n$$ be the modular form attached to an elliptic curve $E$. Let the $3$-adic Galois representation be $\rho_E$; we will assume that $\rho_E$ is always \textit{residually reducible} modulo $3$. Let $$E_{2}^{\lambda,\psi}(q)=L(1-k,\chi)+\sum_n \sigma^{\lambda,\psi}(n)q^n$$ be an Eisenstein series, where $$\sigma^{\lambda,\psi}(n)=\sum_{d|n}\lambda(n/d)\psi(d)d.$$

\begin{prop}
\label{p4.1}
The Galois representation of $E_2^{\psi,\overline{\psi}}$ is isomorphic up to semisimplification to $\psi\oplus \overline{\psi}\chi$ where $\chi$ is the cyclotomic character.
\end{prop}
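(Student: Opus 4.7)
The plan is to identify the two representations by computing the characteristic polynomial of Frobenius at every unramified prime and invoking the Chebotarev density theorem, which pins down a semisimple Galois representation (up to isomorphism) from its traces on Frobenius elements.

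First I would read off the Hecke eigenvalues directly from the $q$-expansion. For a prime $\ell$ not dividing the conductor, the coefficient of $q^\ell$ is
\[
a_\ell = \sigma^{\psi,\overline{\psi}}(\ell) = \psi(\ell) + \overline{\psi}(\ell)\,\ell,
\]
and the $\ell$-th Euler factor of the associated $L$-series is
\[
\bigl(1 - \psi(\ell)\ell^{-s}\bigr)\bigl(1 - \overline{\psi}(\ell)\,\ell\cdot\ell^{-s}\bigr),
\]
so the two Satake parameters are $\psi(\ell)$ and $\overline{\psi}(\ell)\,\ell$. Equivalently, the full $L$-function factors as $L(s,\psi)\,L(s-1,\overline{\psi})$.

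Next I would compute the characteristic polynomial of $\mathrm{Frob}_\ell$ on the proposed representation $\psi \oplus \overline{\psi}\chi$. Since the cyclotomic character sends $\mathrm{Frob}_\ell$ to $\ell$, the eigenvalues of $\mathrm{Frob}_\ell$ are $\psi(\ell)$ and $\overline{\psi}(\ell)\ell$, so
\[
\det\bigl(1 - \mathrm{Frob}_\ell \cdot T \,\big|\, \psi \oplus \overline{\psi}\chi\bigr) = \bigl(1 - \psi(\ell)T\bigr)\bigl(1 - \overline{\psi}(\ell)\,\ell\,T\bigr),
\]
matching the Euler factor above. In particular the traces agree on all Frobenius elements outside a set of density zero, so by Chebotarev density and the Brauer--Nesbitt theorem the two semisimple representations coincide.

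The main conceptual subtlety — and where I would spend most care — is that unlike the cuspidal case (where Deligne constructs a Galois representation in étale cohomology), Eisenstein series do not carry an obvious genuine Galois representation; one instead works with the pseudo-representation determined by the Hecke eigensystem, or passes to a congruent cuspform via Hida/Mazur-style deformation. Either way, the characterization up to semisimplification depends only on the trace function on $\mathrm{Frob}_\ell$, which the computation above exhibits explicitly. Once one adopts that convention, the argument reduces to the matching of Euler factors, and no deeper input is needed.
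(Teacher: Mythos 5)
Your proposal is correct and follows essentially the same route as the paper: compute $a_\ell=\psi(\ell)+\overline{\psi}(\ell)\ell$ from the $q$-expansion, match the characteristic polynomial (equivalently trace and determinant, using $\chi(\mathrm{Frob}_\ell)=\ell$) of Frobenius against $\psi\oplus\overline{\psi}\chi$ at unramified primes, and conclude by Chebotarev density together with Brauer--Nesbitt. Your closing remark about what ``the Galois representation attached to an Eisenstein series'' means (pseudo-representation or congruent cuspform) is a genuine subtlety that the paper's proof passes over silently, so including it only strengthens the argument.
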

\begin{proof}
The Brauer-Nesbitt theorem implies that up to semisimplication, $\rho_{E_2}$ is determined by its characteristic polynomial, or equivalently, trace and determinant. Furthermore, the Cebotarev density function implies that the Frobenius elements are dense in the Galois group. Since $\rho_{E_2}$ is a continuous function, it suffices to check that trace and determinant match on the Frobenius elements $\ell$ for each prime. We have $$\text{tr}(\ell)=[q^\ell]E_2^{\psi,\overline{\psi}}(q)=\sigma^{\psi,\overline{\psi}}(\ell)=\psi(\ell)+\overline{\psi}(\ell)\ell,$$ which confirms that the trace function matches. The determinant yields $$\det(\ell)=\ell=\psi(\ell)\overline{\psi}(\ell)\chi(\ell),$$ and both functions match.
\end{proof}

\begin{prop}
\label{p4.2}
Suppose $\rho_E$ is residually reducible, i.e. the representation $\bmod\hspace{1mm} 3$ is isomorphic to $\chi_1\oplus \chi_2$ up to semisimplification. Then $\rho_E\cong \rho_{E_2^{\chi_M,\chi_M}}\pmod{3}$, where $E_2^{\chi_M,\chi_M}$ is the Eisenstein series with $\chi_M$ a quadratic character, and $\chi_1=\chi_M$ and $\chi_2=\chi_M\chi$ for $\chi$ the cyclotomic character. Furthermore, $f(q)\equiv E_2^{\chi_M,\chi_M}(q)\pmod{3}$.
\end{prop}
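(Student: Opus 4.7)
The plan is to proceed in two stages: first identify the semisimple mod-$3$ representation of $E$ explicitly and match it with $\rho_{E_2^{\chi_M,\chi_M}}$ via Proposition~\ref{p4.1}, and then upgrade this Galois-theoretic identification to a congruence of $q$-expansions.

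For the first stage, observe that since $\mathbb{F}_3^\times = \{\pm 1\}$, any continuous character $\gal(\overline{F}/F) \to \mathbb{F}_3^\times$ is automatically quadratic, so both $\chi_1$ and $\chi_2$ are quadratic characters. The Weil pairing on $T_3(E)$ gives $\det \rho_E = \chi$ (the cyclotomic character), and reducing mod $3$ yields $\chi_1 \chi_2 \equiv \chi \pmod{3}$. Setting $\chi_M := \chi_1$, which is quadratic, I get $\chi_2 = \chi \cdot \chi_M^{-1} = \chi \cdot \chi_M$. Applying Proposition~\ref{p4.1} with $\psi = \chi_M$ (so that $\overline{\psi} = \chi_M$), the semisimplification of $\rho_{E_2^{\chi_M,\chi_M}}$ is $\chi_M \oplus \chi_M \chi$, which matches $\rho_E^{\mathrm{ss}} \bmod 3$ and establishes the asserted representation-level congruence.

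For the $q$-expansion congruence, I would compare Fourier coefficients via Hecke eigenvalues. For each prime $\ell \nmid 3N$, the Eichler--Shimura relation gives $a_\ell(f) = \mathrm{tr}(\rho_E(\frob_\ell))$, while $a_\ell(E_2^{\chi_M,\chi_M}) = \sigma^{\chi_M,\chi_M}(\ell) = \chi_M(\ell)(1 + \ell)$; these are congruent mod $3$ by the previous paragraph, and Hecke multiplicativity extends the congruence to all $a_n$ with $\gcd(n, 3N) = 1$. The main obstacle will be handling the remaining coefficients---in particular the constant term (zero for the cusp form $f$, but proportional to an $L$-value such as $L(-1, \chi_M^2)$ for the Eisenstein series) and the coefficients at primes of bad reduction. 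To resolve this I would invoke the Eisenstein congruence principle: a weight-$2$ normalized eigenform whose mod-$3$ Galois representation is reducible with semisimplification $\chi_M \oplus \chi_M \chi$ must be congruent mod $3$, as a $q$-expansion, to the associated Eisenstein series $E_2^{\chi_M,\chi_M}$, with the constant-term discrepancy absorbed by $3$-adic divisibility of the appropriate generalized Bernoulli number.
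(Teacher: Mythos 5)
The first half of your argument is essentially the paper's: both use that every character into $\mathbb{F}_3^\times=\{\pm1\}$ is quadratic, compute the determinant (cyclotomic) to force $\chi_2=\chi_1\chi$, set $\chi_M=\chi_1$, and match semisimplifications with $\rho_{E_2^{\chi_M,\chi_M}}$ via Proposition~\ref{p4.1} by comparing trace and determinant on Frobenius elements (Brauer--Nesbitt plus Chebotarev). That part is fine.

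The gap is in your second stage. You correctly isolate the hard point --- the constant term and the coefficients at bad primes --- but your proposed resolution is to invoke an ``Eisenstein congruence principle'' asserting that a weight-$2$ eigenform with reducible mod-$3$ representation of semisimplification $\chi_M\oplus\chi_M\chi$ is congruent, as a $q$-expansion, to $E_2^{\chi_M,\chi_M}$. That assertion \emph{is} the ``furthermore'' clause you are asked to prove, so as written the argument is circular: you have not supplied any mechanism forcing the constant term $L(-1,\chi_M^2)$-type discrepancy (equivalently the relevant generalized Bernoulli number) to vanish mod $3$, you have only asserted that it is ``absorbed.'' The paper closes this step differently and concretely: having matched all nonconstant coefficients mod $3$ (at good primes by the trace computation, extended multiplicatively), it observes that $f(q)-E_2^{\chi_M,\chi_M}(q)\equiv c\pmod{3}$ for a constant $c$, so $c$ would have to be (the reduction of) a modular form of weight $2$, and by Serre's theory of mod-$p$ modular forms \cite{serre1973formes} this forces $c=0$. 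Some argument of this kind (or an actual proof of $3\mid B_{1,\chi_M}$-type divisibility) is needed where you wrote ``Eisenstein congruence principle.'' A smaller remark: your caution about bad-prime coefficients is legitimate --- at a prime $\ell\,\|\,N$ one has $a_\ell(f)=\pm1$ versus $\chi_M(\ell)(1+\ell)$ for the Eisenstein series, and these need not agree mod $3$ --- and indeed the paper only ever uses the congruence away from such indices, removing them later by $\ell$-depletion (\S\ref{subsec:stabilize}, \S\ref{subsec:varyK}); so flagging that the stated congruence should be read away from bad primes is reasonable, but it does not substitute for the missing constant-term argument.
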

\begin{proof}
Since $\mathbb{F}_3^\times\cong \{\pm 1\}$, it follows that $\chi_1,\chi_2$ are quadratic characters. By Brauer-Nesbitt, it suffices to check that the trace and determinant functions agree on Frobenius elements, which are dense in the Galois group by the Cebotarev density theorem. Checking the determinant function, we have that $$\chi(\ell)=\ell=\det(\ell)=\chi_1(\ell)\chi_2(\ell).$$ Thus $$\chi_2=\chi_1^{-1}\chi=\chi_1\chi.$$ Letting $\chi_1=\chi_M$ for some quadratic character modulo $M$, we have that $$\chi_1=\chi_M,\hspace{2mm}\chi_2=\chi_M\chi.$$ Now, the trace functions yield that $$a_{\ell}=\chi_M(\ell)+\chi_M(\ell)\chi(\ell)=\chi_M(\ell)+\chi_M(\ell)\ell=\sum_{d|\ell}\chi_M(\ell/d)\chi_M(\ell)d=\sigma^{\chi_M,\chi_M}(\ell)=[q^\ell]E_2^{\chi_M,\chi_M}(q).$$ Since the coefficients of the two modular forms agree on prime indices, they agree on all nonconstant terms. Thus $f(q)-E_2^{\chi_M,\chi_M}(q)\equiv c\pmod{3}$, where the left hand side is a modular form of weight $2$, and thus the right hand side must also be a modular form of weight $2$. By \cite{serre1973formes}, $c=0$, and we have that $f(q)\equiv E_2^{\chi_M,\chi_M}(q)\pmod{3}$.
\end{proof}
\subsection{Congruences of quadratic twists}
Consider some arbitrary squarefree $D'\in \mathbb{Z}$. We will study the \textbf{quadratic twist} of $f$ by $D'$ and write it as $f^{(D')}(q)$, with the elliptic curve $E\coloneqq y^2=x^3+ax+b$ becoming $E^{(D')}\coloneqq D'y^2=x^3+ax+b$. We will assume that $\rho_E$ is residually reducible. By Proposition~\ref{p4.2}, we have $f(q)\equiv E_2^{\chi_M,\chi_M}(q)\pmod{3}$ for some quadratic character $\chi_M$. Since $f_{D'}(q)=\sum_{n\ge 0}\chi_{D'}(n)a_nq^n$, it follows that $f_{D'}(q)\equiv E_2^{\chi_M\chi_{D'},\chi_M\chi_{D'}}(q)\pmod{3}$. Since $\chi_{D'}$ is again a quadratic character, we may write $\chi_D=\chi_M\chi_{D'}$ for some squarefree $D\in\mathbb{Z}$. Thus every quadratic twist of an elliptic curve whose Galois representation is residually reducible $\bmod\hspace{1mm}3$ is congruent to $E_2^{\chi_D,\chi_D}(q)$ modulo $3$, where $\chi_D$ is some quadratic character. From now on, we will write the quadratic twist of the elliptic curve $E^{(D')}$ as $E_D$ and the associated modular form as $f_D$, where $\chi_D=\chi_M\chi_{D'}$.

\subsection{Stabilizations}
\label{subsec:stabilize}
We follow \cite{bertolini2013generalized} and describe the $p$-depletions/stabilizations. Let $f(q)=\sum_{n}a_nq^n$ be the modular form of an elliptic curve $E/F$. Then the $p$-depletion is given by $$f^\flat (q)=\sum_{p\nmid n}a_n q^n.$$ Suppose $p$ is a good prime; then $f^\flat =f|VU-UV$, where $U$ and $V$ are given by \cite[p.~1085]{bertolini2013generalized}.

Let $N$ be the conductor of $E/F$. If a prime $\ell^2|N$, then $a_\ell=0$, hence there is no need to change the value through $\ell$-depletion. If a prime $\ell|N$ with $v_{\ell}(N)=1$, then $a_\ell=\pm 1$, so we may use $1\mp V_\ell$ instead of $VU-UV$. In particular, $1-a_\ell V=1-T_\ell V$ suffices.

We assume the \textbf{Heegner hypothesis} \cite[p.~8]{bertolini2014p}, Assumption~\ref{assump:heegner}: for every $\ell|N$, the ideal $(\ell)$ splits in $\mathcal{O}_F$ as $\mathfrak{l}\cdot \overline{\mathfrak{l}}$.

\begin{prop}
\label{p5.1}
The Euler factor at $\ell$ of $S_\chi^{\flat_\ell}$ for some $\ell|N$ is $1-\chi^{-1}(\overline{\mathfrak{l}})$.
\end{prop}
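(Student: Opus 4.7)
The plan is to unfold the $\ell$-depletion using the operator formula spelled out just above the statement: since $\ell \mid N$ with $v_\ell(N) = 1$, the depletion reduces to $S_\chi^{\flat_\ell} = S_\chi \mid (1 - T_\ell V_\ell)$, and the content of the proposition is that the local factor at $\ell$ that survives this operator is exactly $1 - \chi^{-1}(\overline{\mathfrak{l}})$.

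First I would identify the Hecke polynomial of $S_\chi$ at $\ell$. Under the Heegner hypothesis (Assumption~\ref{assump:heegner}), the rational prime $\ell$ splits in $\mathcal{O}_F$ as $\mathfrak{l}\cdot\overline{\mathfrak{l}}$, so the local Euler factor of $S_\chi$ at $\ell$ decomposes as a product of two linear pieces, one indexed by each prime above $\ell$. The ramification condition $v_\ell(N) = 1$ distinguishes one of the two primes, say $\mathfrak{l}$, as carrying the unit root $a_\ell = \pm 1$ of the Hecke polynomial, while the piece attached to $\overline{\mathfrak{l}}$ is the one of shape $1 - \chi^{-1}(\overline{\mathfrak{l}})\,\ell^{-s}$ (up to the usual normalization) via the defining data of $S_\chi$ as a $\chi$-twist.

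Next I would apply the stabilization operator $(1 - T_\ell V_\ell)$. Since $V_\ell$ corresponds to multiplication by $\ell^{-s}$ in the Dirichlet series and $T_\ell$ acts by the unit root, the operator cancels exactly the local factor coming from $\mathfrak{l}$. What remains is the linear piece attached to $\overline{\mathfrak{l}}$, and to match the conventions of \S\ref{sec:factorization} (in particular $\langle \chi, \lambda_3\rangle = L_p(1,\chi^{-1})$, which naturally involves $\chi^{-1}$), this piece reads off as $1 - \chi^{-1}(\overline{\mathfrak{l}})$ at the relevant evaluation point.

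The main obstacle is bookkeeping of signs and conventions: correctly identifying which of $\mathfrak{l}$ or $\overline{\mathfrak{l}}$ is removed by the stabilization, and whether the character appears as $\chi$ or $\chi^{-1}$ in the surviving factor. I would pin these down by comparing the resulting Euler factor directly against the $p$-adic $L$-value formulas of Theorem~\ref{t2.13}, which forces the choice of conjugate ideal and inverse character uniformly; once that is settled, the calculation itself is a mechanical application of $(1 - T_\ell V_\ell)$ to the $q$-expansion.
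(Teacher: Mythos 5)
Your proposal has the right operator and lands on the right formula, but the mechanism you describe is not the one that actually produces $1-\chi^{-1}(\overline{\mathfrak{l}})$, and as written it would not go through. The paper's proof works directly with $S_\chi^{\flat_\ell}=\sum_{[\mathfrak{a}]}\chi_j^{-1}(\mathfrak{a})\,\theta^j f^{\flat_\ell}(\mathfrak{a}*(A_0,t_0,\omega_0))$: one writes $f^{\flat_\ell}=f|(1\mp T_\ell V)$, uses the fact that $V$ acts on the CM point $\mathfrak{a}*(A_0,t_0,\omega_0)$ by translation by $\overline{\mathfrak{l}}^{-1}$ in the class group, re-indexes the sum over $[\mathfrak{a}]$, and thereby pulls out the scalar $1\mp a_\ell\chi^{-1}(\overline{\mathfrak{l}})$, which equals $1-\chi^{-1}(\overline{\mathfrak{l}})$ since $a_\ell=\pm1$ forces $\mp a_\ell=-1$. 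The key step you are missing is precisely this identification of the action of $V$ on CM points with multiplication by $\chi^{-1}(\overline{\mathfrak{l}})$ after re-indexing; your substitute --- ``$V_\ell$ corresponds to multiplication by $\ell^{-s}$ in the Dirichlet series'' --- does not apply, because $S_\chi$ is a finite period sum over ideal classes, not a Dirichlet series with an Euler product, and ``applying $(1-T_\ell V)$ to the $q$-expansion'' alone can never produce a value of $\chi$ at an ideal.

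Two further points in your setup are off. First, since $v_\ell(N)=1$ the Hecke polynomial of $f$ at $\ell$ is linear ($1-a_\ell X$ with $a_\ell=\pm1$), so there is no decomposition into ``two linear pieces, one indexed by each prime above $\ell$'' for the stabilization to cancel one of; the splitting $\ell=\mathfrak{l}\overline{\mathfrak{l}}$ enters only through where $V$ sends the CM point, not through a factorization of the Hecke polynomial. Second, the direction of the operation is reversed: depletion does not strip a local factor off $S_\chi$, it multiplies $S_\chi$ by the new factor $1-\chi^{-1}(\overline{\mathfrak{l}})$ (this is exactly why Corollary~\ref{c5.1.1} must then check that this factor does not vanish mod $p$). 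Calibrating against Theorem~\ref{t2.13} would not rescue this, since that theorem concerns the Katz $p$-adic $L$-function of Hecke characters rather than the Rankin--Selberg sums $S_\chi$.
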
 
\begin{proof}
Following \cite[p.~1135]{bertolini2013generalized}, we set $$S_\chi^{\flat_\ell}=\sum_{[\mathfrak{a}]}\chi_j^{-1}(\mathfrak{a})\cdot \theta^j f^{\flat_\ell}(\mathfrak{a}*(A_0,t_0,\omega_0)).$$ Let $a_\ell=\pm 1$. Then
\begin{align*}
\theta^j f^{\flat_\ell}(\mathfrak{a}*(A_0,t_0,\omega_0))&=\{\theta^j f | (1\mp T_\ell V)\}(\mathfrak{a}*(A_0,t_0,\omega_0)),\\
&=\theta^j f(\mathfrak{a}*(A_0,t_0,\omega_0))\mp \ell^j a_\ell \theta^j f(\overline{\mathfrak{l}}^{-1}\mathfrak{a}*(A_0,t_0,\omega_0)),\\
\end{align*} so $$S_{\chi}^{\flat_{\ell}}=(1\mp a_\ell\chi^{-1}(\overline{\mathfrak{l}}))S_\chi=(1-\chi^{-1}(\overline{\mathfrak{l}}))S_\chi.$$
\end{proof}
\begin{corollary}
\label{c5.1.1}
Assuming the Heegner hypothesis, the Euler factor at $\ell\ne p$ does not vanish modulo $p$ when $\ell\ne 1\pmod{p}$.
\end{corollary}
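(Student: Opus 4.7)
By Proposition~\ref{p5.1}, the Euler factor at $\ell$ is $1-\chi^{-1}(\overline{\mathfrak{l}})$, so the task reduces to showing this element of $\mathbb{D}_p$ is a unit modulo the maximal ideal $\mathfrak{m}\subset\mathbb{D}_p$ whenever $\ell\not\equiv 1\pmod p$. Since $\chi$ is a finite-order character valued in $\mathbb{D}_p^\times$, the element $\chi^{-1}(\overline{\mathfrak{l}})$ is a root of unity, and the natural strategy is to decompose it into its $p$-power-order and prime-to-$p$-order parts and track each under reduction.

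Concretely, write $\chi^{-1}(\overline{\mathfrak{l}})=\zeta_p\cdot \zeta_{p'}$ as a product of a $p$-power root of unity and a root of unity of order coprime to $p$. All $p$-power roots of unity satisfy $\zeta_p\equiv 1\pmod{\mathfrak{m}}$, whereas the prime-to-$p$ roots of unity inject into the residue field $\mathbb{D}_p/\mathfrak{m}\cong \overline{\mathbb{F}}_p$ under reduction. Hence $1-\chi^{-1}(\overline{\mathfrak{l}})\equiv 0\pmod{\mathfrak{m}}$ if and only if $\zeta_{p'}=1$, i.e.\ $\chi^{-1}(\overline{\mathfrak{l}})$ is a pure $p$-power root of unity.

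Next I would use the Heegner hypothesis to control the order of $\chi^{-1}(\overline{\mathfrak{l}})$. Under this hypothesis $\ell$ splits in $\mathcal{O}_K$ as $\mathfrak{l}\overline{\mathfrak{l}}$ with residue field $\mathcal{O}_K/\overline{\mathfrak{l}}\cong \mathbb{F}_\ell$. Since $\chi$ is unramified at $\overline{\mathfrak{l}}$ in the setup of \S\ref{subsec:stabilize}, we have $\chi^{-1}(\overline{\mathfrak{l}})=\chi^{-1}(\frob_{\overline{\mathfrak{l}}})$, and by class field theory applied to the ring class extension through which $\chi$ factors, the order of this Frobenius image divides $\ell-1=|(\mathcal{O}_K/\overline{\mathfrak{l}})^\times|$. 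Assuming $\ell\not\equiv 1\pmod p$ gives $p\nmid \ell-1$, forcing the order of $\chi^{-1}(\overline{\mathfrak{l}})$ to be coprime to $p$; hence $\zeta_p=1$ and $\chi^{-1}(\overline{\mathfrak{l}})=\zeta_{p'}$ injects into $\overline{\mathbb{F}}_p$. Provided $\chi^{-1}(\overline{\mathfrak{l}})\ne 1$ in $\mathbb{D}_p$, its reduction is a nontrivial root of unity, so $1-\chi^{-1}(\overline{\mathfrak{l}})$ is a unit modulo $\mathfrak{m}$.

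The main obstacle is the class-field-theoretic step bounding $\operatorname{ord}(\chi^{-1}(\overline{\mathfrak{l}}))\mid \ell-1$: intuitively this comes from the residue field structure $\mathcal{O}_K/\overline{\mathfrak{l}}\cong\mathbb{F}_\ell$ and the reciprocity map, but making it rigorous requires identifying the precise ring class extension through which $\chi$ factors and checking that the Frobenius at the split prime $\overline{\mathfrak{l}}$ lies in a subgroup of order dividing $\ell-1$. Without the Heegner hypothesis the bound would not manifest as $\ell-1$ (it would involve $\ell^2-1$ from the inert case), so the split decomposition is essential.
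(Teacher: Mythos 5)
Your proof rests on a misidentification of the character $\chi$, and this leaves a gap you yourself flag but cannot close. In the setting of Proposition~\ref{p5.1} and its corollary, the relevant $\chi$ is not a finite-order character: it is (a finite twist of) the anticyclotomic/norm character of infinity type $(1,1)$ appearing in $S_{\mathbb{N}_K\chi_D}^\flat$, so that $\chi(\overline{\mathfrak{l}})=\mathbb{N}_K(\overline{\mathfrak{l}})=\ell$ is an honest integer, not a root of unity. The paper's proof is then one line: the Euler factor is $1-\chi^{-1}(\overline{\mathfrak{l}})=1-\ell^{-1}$, which is a $p$-adic unit precisely when $\ell\not\equiv 1\pmod p$. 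That is the entire content of the corollary, and it explains why the hypothesis is a congruence on $\ell$ modulo $p$ rather than a condition on the order of a character value.

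Under your reading ($\chi$ of finite order, $\chi^{-1}(\overline{\mathfrak{l}})$ a root of unity), the statement you are proving is actually false without an extra hypothesis: if $\chi$ is trivial at $\overline{\mathfrak{l}}$, the Euler factor $1-\chi^{-1}(\overline{\mathfrak{l}})$ is exactly $0$, no matter what $\ell$ is modulo $p$. You acknowledge this with the proviso ``provided $\chi^{-1}(\overline{\mathfrak{l}})\ne 1$,'' but that proviso is not among the corollary's hypotheses and cannot be derived from them; so the argument does not establish the claim. The $p$-power versus prime-to-$p$ decomposition of roots of unity and the bound $\operatorname{ord}(\chi^{-1}(\overline{\mathfrak{l}}))\mid \ell-1$ are fine techniques in other contexts (e.g.\ for genuinely finite ring class characters), but here they are solving the wrong problem. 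The fix is simply to use that $\chi$ has infinity type $(1,1)$, compute $\chi(\overline{\mathfrak{l}})=\ell$, and read off the nonvanishing of $1-\ell^{-1}$ modulo $p$.
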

\begin{proof}
Since $\chi(\overline{\mathfrak{l}})=\ell$, it suffices to have $1-\ell^{-1}\not\equiv 0\pmod{p}\iff \ell\not\equiv 1\pmod{p}$.
\end{proof}

If $f_1(q)$ and $f_2(q)$ agree on all coefficients $[q^n]f_i(q)$ whenever $p_i\nmid n$ for all $p_i\in X$ where $X$ is a finite set of primes, then we may take the $p_i$-depletions to force them to be equal. In particular, $$f_1^{\flat_{X}}(q)=f_2^{\flat_X}(q),$$ where $f_i^{\flat_X}(q)$ indicates the modular form $f_i(q)$ after $p_j$ depletions for each $p_j$.

\section{Congruence of modular forms}
\label{sec:modularforms}
\subsection{Varying $K$}
\label{subsec:varyK}
We follow the discussion from section $3$. For the remainder of this section, we set $p=3$. Suppose we have an $L$-series attached to an elliptic curve $E$ whose Fourier expansion is $f(q)$ whose Galois representation $\rho_E$ modulo $3$ is residually reducible. Then proposition $15$ implies that $\rho_E\cong \rho_{E_2^{\chi_M,\chi_M}}\pmod{3}$, the Galois representation modulo $3$ of the Eisenstein series $E_2^{\chi_M,\chi_M}$. Now consider the quadratic twist by $D'$, so that $f^{(D')}(q)=\sum \chi_{D'}(n)a_nq^n$, where $\chi_{D'}$ is the Kronecker character. It follows that $\rho_{E^{(D')}}\cong \rho_{E_2^{\chi_D,\chi_D}}\pmod{3}$ where $\chi_D=\chi_{D'}\chi_M$ is some quadratic character. Now denote $E_D\coloneqq E^{(D')}$, and $f_D(q)\coloneqq f^{(D')}(q)$, so that we parametrize the twists by the corresponding Eisenstein series. In particular, $[q^\ell]f(q)\equiv [q^\ell]E_2^{\chi_D,\chi_D}(q)\pmod{3}$ for all $\ell\nmid \text{cond}(E_D)=ND^2$, where $N=\text{cond}(E)$. We thus have that $f_D(q)$ and $E_2^{\chi_D,\chi_D}(q)$ are congruent modulo $3$ everywhere except possibly at indices divisible by some bad prime $\ell$.

Let $N=\text{cond}(E)$. Assume $p\nmid ND$. If $\gcd(N,D)=1$, then $\text{cond}(E_D)=\text{level}(f_D)=ND^2$. On the other hand, $D=\text{cond}(\chi_D)$, so $\text{level}(E_{2}^{\chi_D,\chi_D})=D^2$. We need only stabilize ($\ell$-deplete) at primes $\ell$ such that $v_\ell(ND^2)=1$. Due to Corollary~\ref{c5.1.1}, we will require all such $\ell$ to satisfy $\ell\not\equiv 1\pmod{3}$ for the rest of the paper. This is noted in the section on assumptions. For each of these $\ell$, we have $\ell \nmid D$, so $\ell$ is a good prime for $E_2^{\chi_D,\chi_D}$. On the other hand, for $\ell|D^2$, it immediately follows that $\ell^2|D^2$ so $[q^\ell] E_2^{\chi_D,\chi_D}=0$, and since $\ell^2|ND^2$, then $[q^\ell]f_D=0$. Hence there is no need to $\ell$-deplete $E_{k}^{\chi_D,\chi_D}(q)$ at such primes (it is already zero), and we only need to consider the primes $\ell$ for which $v_\ell(N)=1$.

Denote this set by $X$. Then $X$ is a set of bad primes for $f_D(q)$, but good primes for $E_{2}^{\chi_D,\chi_D}(q)$. Take $Z$ to be the product of $\ell\in X$.

Note that $S_{\chi}^{\flat_\ell}(f)\equiv S_{\chi}(f^{[\ell]})\pmod{p}$ and $f_D(q)\equiv E_2^{\chi_D,\chi_D}(q)\pmod{p}$, so by the $q$-expansion principle, we have 
\begin{prop}
\label{p6.1}
For infinity types $\chi$ of type $(k+j,-j)$ with $j\ge 0$, we have $S_{\chi}^{\flat}(f_D^{[pZ]})\equiv S_{\chi}^{\flat}(E_{2}^{\chi_D,\chi_D [pZ]})\pmod{p}$.
\end{prop}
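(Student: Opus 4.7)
The plan is to reduce the statement to a $q$-expansion comparison modulo $p$, then invoke the $q$-expansion principle for $p$-adic modular forms together with the identity $S_{\chi}^{\flat_\ell}(f)\equiv S_{\chi}(f^{[\ell]})\pmod{p}$ that was already set up. The starting point is Proposition~\ref{p4.2} (applied to the twist rather than to $E$ itself): since $\rho_{E_D}$ is residually reducible mod $3$, the twisted modular form satisfies $f_D(q)\equiv E_2^{\chi_D,\chi_D}(q)\pmod{p}$ as $q$-expansions. A priori this holds only coefficient-by-coefficient at indices $n$ coprime to the joint level; the discrepancy can live exactly at indices divisible by a prime $\ell$ which is good for $E_2^{\chi_D,\chi_D}$ but bad for $f_D$.

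First I would identify these discrepancy primes precisely. As discussed just before the statement, every $\ell\mid D$ contributes $0$ on both sides (since $\ell^2\mid D^2\mid ND^2$ and $\ell^2\mid D^2$), so the only troublesome primes are those in $X=\{\ell:v_\ell(N)=1\}$, whose product is $Z$. The point of depleting at every $\ell\in X$ together with at $p$ is that $[q^n]f_D^{[pZ]}=[q^n]E_2^{\chi_D,\chi_D\,[pZ]}$ whenever $\gcd(n,pZ)=1$ (by the residually reducible congruence), and both sides are $0$ whenever $\gcd(n,pZ)>1$ (by definition of $[pZ]$-depletion). Hence
\[
f_D^{[pZ]}(q)\equiv E_2^{\chi_D,\chi_D\,[pZ]}(q)\pmod{p}
\]
as formal $q$-expansions.

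Next I would transfer this to the $S_\chi^\flat$ values. Apply $S_\chi$ to both sides: since $S_\chi$ on a $p$-adic modular form is given by a convergent sum on Fourier coefficients that is continuous in the $q$-expansion (by the $q$-expansion principle applied to overconvergent or $p$-adic modular forms of tame level), the congruence of $q$-expansions forces $S_\chi(f_D^{[pZ]})\equiv S_\chi(E_2^{\chi_D,\chi_D\,[pZ]})\pmod{p}$. Finally, repeated application of $S_\chi^{\flat_\ell}(g)\equiv S_\chi(g^{[\ell]})\pmod{p}$ (the relation used in Proposition~\ref{p5.1}) across all the primes in $\{p\}\cup X$ converts $S_\chi$ of a depleted form into the full $S_\chi^\flat$, yielding the desired congruence.

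The main obstacle, and the step requiring the most care, is the passage through the $q$-expansion principle in the $p$-adic setting: one has to check that $S_\chi$ descends to a $p$-adic continuous functional on the space of $p$-adic modular forms of the relevant tame level, so that a mod-$p$ coefficient-wise congruence implies a mod-$p$ congruence of values. This is where the hypothesis $j\geq 0$ on the infinity type $(k+j,-j)$ matters, ensuring that the relevant theta-series constructions of Bertolini--Darmon--Prasanna apply and that both $f_D^{[pZ]}$ and $E_2^{\chi_D,\chi_D\,[pZ]}$ lie in a common space where the functional is defined and continuous. Once that framework is in place, the argument is essentially the coefficient-chasing sketched above.
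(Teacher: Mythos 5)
Your proposal is correct and follows essentially the same route as the paper: identify the discrepancy primes as those $\ell$ with $v_\ell(N)=1$ (the primes dividing $D$ already give vanishing coefficients on both sides), note that after $pZ$-depletion the $q$-expansions of $f_D$ and $E_2^{\chi_D,\chi_D}$ agree modulo $p$, and then conclude via the relation $S_{\chi}^{\flat_\ell}(f)\equiv S_{\chi}(f^{[\ell]})\pmod{p}$ together with the $q$-expansion principle. Your added remarks on continuity of the functional and the role of $j\ge 0$ only make explicit what the paper leaves implicit.
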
 
Setting $k=2$ and taking the limit of $j_m=p^m-1$ as $m\rightarrow \infty$ gives, by continuity,
\begin{prop}
\label{p6.2}
For $\mathbb{N}_K$ the norm character of type $(1,1)$, we have $$S_{\mathbb{N}_K\chi_D}^{\flat}(f_D^{[pZ]})\equiv S_{\mathbb{N}_K\chi_D}^{\flat}(E_{2}^{\chi_D,\chi_D [pZ]})\pmod{p}.$$
\end{prop}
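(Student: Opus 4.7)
The plan is to lift the mod-$p$ congruence of Proposition~\ref{p6.1}, which holds for each algebraic character of infinity type $(k+j,-j)$ with $j\ge 0$, to the limiting (non-algebraic, but continuous) character $\mathbb{N}_K\chi_D$ by a $p$-adic continuity argument.

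First I would fix $k=2$ and examine the sequence of characters $\chi_{D,m}\coloneqq \chi_D\cdot\psi_m$, where $\psi_m$ denotes the algebraic Hecke character of infinity type $(1+(p^m-1),\,1-(p^m-1))=(p^m,\,2-p^m)$ obtained from the $(p^m-1)$-th power of the ``tautological'' character giving $j_m=p^m-1$. By construction, each $\chi_{D,m}$ is of infinity type $(2+j_m,-j_m)$, so Proposition~\ref{p6.1} gives
\begin{equation*}
S_{\chi_{D,m}}^{\flat}\bigl(f_D^{[pZ]}\bigr)\;\equiv\; S_{\chi_{D,m}}^{\flat}\bigl(E_2^{\chi_D,\chi_D\,[pZ]}\bigr)\pmod{p}.
\end{equation*}

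Next I would check $p$-adic convergence. Since $p^m\to 0$ in $\mathbb{Z}_p$, the exponents $p^m-1$ converge to $-1$ in $\mathbb{Z}_p$, and correspondingly the infinity type $(p^m+1,\,1-p^m)$ of $\chi_{D,m}$ converges to $(1,1)$, which is precisely the infinity type of $\mathbb{N}_K\chi_D$. Matched against the relevant measure on the Galois group, the pairings $\chi\mapsto S_\chi^{\flat}(f_D^{[pZ]})$ and $\chi\mapsto S_\chi^{\flat}(E_2^{\chi_D,\chi_D\,[pZ]})$ are continuous functionals on the space of continuous $p$-adic characters (this is just the continuity built into the construction of the measures $\lambda_i$ discussed in \S\ref{subsec:measures} and used in Theorem~\ref{t2.13}, applied here to the $p$-adic interpolating measures that $p$-depletion produces).

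Finally, I would pass to the limit. By continuity, the left and right sides of the congruence converge $p$-adically to $S_{\mathbb{N}_K\chi_D}^{\flat}(f_D^{[pZ]})$ and $S_{\mathbb{N}_K\chi_D}^{\flat}(E_2^{\chi_D,\chi_D\,[pZ]})$ respectively. Since $\mathbb{Z}_p/p\mathbb{Z}_p$ is discrete, any Cauchy sequence of pairs whose members are congruent mod $p$ produces a pair of limits that is again congruent mod $p$; this yields the claimed identity.

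The only real subtlety is the first bullet: one must ensure that the algebraic characters $\chi_{D,m}$ can genuinely be interpreted as specializations of the same $p$-adic family to which $\mathbb{N}_K\chi_D$ also belongs, so that ``continuity'' makes sense uniformly for both sides. Once that framework is in place—i.e. once both $S_{(\cdot)}^{\flat}(f_D^{[pZ]})$ and $S_{(\cdot)}^{\flat}(E_2^{\chi_D,\chi_D\,[pZ]})$ are realized as integrals of continuous functions against measures on the Iwasawa algebra associated with the anticyclotomic tower—the rest of the argument (convergence of exponents in $\mathbb{Z}_p$ and preservation of mod-$p$ congruences under $p$-adic limits) is routine.
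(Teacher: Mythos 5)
Your proposal is correct and follows essentially the same route as the paper: the paper also obtains Proposition~\ref{p6.2} from Proposition~\ref{p6.1} by setting $k=2$, specializing to characters of infinity type $(2+j_m,-j_m)$ with $j_m=p^m-1$, and passing to the limit via continuity of $\chi\mapsto S_\chi^{\flat}$, with the limiting infinity type $(1,1)$ identified with that of $\mathbb{N}_K\chi_D$ (cf.\ the same argument spelled out in the proof of Proposition~\ref{p6.3}). Your write-up merely makes explicit the $p$-adic convergence of the exponents and the preservation of mod-$p$ congruences under limits, which the paper leaves implicit.
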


In fact, more generally:
\begin{prop}
\label{p6.3}
Suppose we have two modular forms $f$ and $g$ with Galois representations $\rho_f$ and $\rho_g$ such that $\rho_f\equiv \rho_g\pmod{p}$. Then $S_{\chi}^\flat(f^{[N]})\equiv S_{\chi}^{\flat}(g^{[N]})\pmod{p}$, where $N$ is such that $[q^\ell]f\equiv [q^\ell]g\pmod{p}$ whenever $\ell\nmid N$, and $\chi$ is type $(k+j,-j)$ for $j\ge 0$. Furthermore, this is true for $\chi=\mathbb{N}_K$, the norm character of infinity type $(1,1)$.
\end{prop}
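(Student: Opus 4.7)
The plan is to mirror the structure of Propositions~\ref{p6.1} and~\ref{p6.2}, replacing the specific pair $(f_D, E_2^{\chi_D,\chi_D})$ by an arbitrary pair $(f,g)$ whose $p$-adic Galois representations agree modulo $p$. The argument proceeds in three movements. The first movement promotes the prime-by-prime congruence $[q^\ell]f \equiv [q^\ell]g \pmod{p}$ for $\ell\nmid N$ to a full $q$-expansion congruence on the $N$-depleted forms. Brauer--Nesbitt applied to $\rho_f \equiv \rho_g \pmod{p}$, combined with Chebotarev (as in Proposition~\ref{p4.1}), already matches traces of Frobenius at every prime $\ell \nmid N$ after absorbing $p$ into $N$ if necessary. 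The Hecke recursion $a_{\ell^{r+1}} = a_\ell a_{\ell^r} - \ell^{k-1} a_{\ell^{r-1}}$ at primes $\ell\nmid N$, combined with multiplicativity $a_{mn} = a_m a_n$ for $\gcd(m,n)=1$, then propagates the congruence from primes to every index coprime to $N$. After $N$-depletion, both $f^{[N]}$ and $g^{[N]}$ retain only coefficients at indices coprime to $N$, so $f^{[N]}(q) \equiv g^{[N]}(q) \pmod{p}$ as formal $q$-expansions.

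The second movement applies the $q$-expansion principle exactly as in the proof of Proposition~\ref{p6.1}: because the operator $S_\chi^{\flat}$ is built from the Fourier coefficients of the depleted input at indices coprime to $pN$, and those coefficients now coincide modulo $p$, we conclude $S_\chi^\flat(f^{[N]}) \equiv S_\chi^\flat(g^{[N]}) \pmod{p}$ for every infinity type $\chi$ of type $(k+j,-j)$ with $j \geq 0$. The third movement handles the case $\chi = \mathbb{N}_K$ by $p$-adic continuity. Specializing $k=2$ and taking $j_m = p^m - 1$, the infinity types $(k+j_m,-j_m)$ converge $p$-adically to $(1,1)$, i.e., to the norm character $\mathbb{N}_K$. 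Passing the congruence through this $p$-adic limit, using continuity of $S_\chi^\flat$ in $\chi$ exactly as in Proposition~\ref{p6.2}, yields the final clause.

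The main obstacle is really confined to the first movement: one must be sure that no residual mismatch at indices divisible by a bad prime of either $f$ or $g$ survives. This is precisely the role of $N$, which by hypothesis contains every prime at which the Fourier coefficients of $f$ and $g$ might disagree modulo $p$; after $N$-depletion those indices are removed wholesale. The $q$-expansion principle step and the $p$-adic continuity step are then formal and identical to the Gross-style interpolation arguments already invoked earlier in the section, so no new analytic input is required.
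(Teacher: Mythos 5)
Your proposal is correct and takes essentially the same route as the paper's own proof: depletion at the primes dividing $N$ forces the coefficients at bad indices to vanish, the $q$-expansion principle then gives $S_\chi^\flat(f^{[N]})\equiv S_\chi^\flat(g^{[N]})\pmod{p}$, and $p$-adic continuity of $S_\chi^\flat$ in $\chi$ along $j_m=p^m-1$ yields the case $\chi=\mathbb{N}_K$. The only difference is that you re-derive the coefficient congruence at indices coprime to $N$ from Brauer--Nesbitt, Chebotarev, and Hecke multiplicativity, where the paper simply cites the stated hypothesis; this is harmless additional detail rather than a different argument.
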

\begin{proof}
The stabilization at $\ell$ yields $[q^\ell]f^{[\ell]}=0$, and thus after stabilizing at all $\ell|N$, it follows that $$\ell|N\implies [q^\ell]f^{[N]}\equiv [q^\ell]g^{[N]}\equiv 0\pmod {p}$$ and by hypothesis, they are already congruent modulo $p$ when $\ell\nmid N$. The $q$-expansion principle then implies that $$S_{\chi}^{\flat}(f^{[N]})\equiv S_{\chi}^{\flat}(g^{[N]})\pmod{p}.$$

To see that this holds for $\chi=\mathbb{N}_K$, take $\chi$ of type $(2+p^m-1,1-p^m)$ as $m\rightarrow \infty$. Since $S_{\chi}^{\flat}$ is continuous in $\chi$, it follows that the limit is $(2-1,1)=(1,1)$ and the congruence for $\mathbb{N}_K$ holds.
\end{proof}

Define $L_{p,\alpha}(w,\chi)=S_\chi^{\flat}(w)$ as in \cite[Definition~8.8]{krizsupersingular}; we'll write $L_p(w,\chi)$ as shorthand. 

\begin{prop}
\label{p6.4}
If $S_\chi^\flat (E_2^{\chi_D,\chi_D [pZ]})\not\equiv 0\pmod{p}$, then $E_D/K$ has rank $1$.
\end{prop}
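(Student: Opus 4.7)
The plan is to combine the congruence from Proposition~\ref{p6.2} with a $p$-adic Waldspurger/Gross--Zagier formula of Bertolini--Darmon--Prasanna type, and then invoke Kolyvagin's theorem. First, by Proposition~\ref{p6.2} applied with $\chi = \mathbb{N}_K \chi_D$ and the hypothesis that $S_\chi^{\flat}(E_2^{\chi_D,\chi_D[pZ]}) \not\equiv 0 \pmod{p}$, I would deduce
$$S_\chi^{\flat}(f_D^{[pZ]}) \;\equiv\; S_\chi^{\flat}(E_2^{\chi_D,\chi_D[pZ]}) \;\not\equiv\; 0 \pmod{p},$$
so that this $p$-adic number is nonzero in $\mathbb{C}_p$. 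Equivalently, the $p$-adic $L$-value $L_p(f_D^{[pZ]}, \chi)$ attached to the $p$-depleted, stabilized modular form $f_D^{[pZ]}$ is a $p$-adic unit.

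Second, I would interpret $S_{\mathbb{N}_K\chi_D}^{\flat}(f_D^{[pZ]})$ as a special value of the anticyclotomic $p$-adic $L$-function for $f_D/K$ evaluated at the norm character, and apply the Bertolini--Darmon--Prasanna formula. Under the Heegner hypothesis (Assumption~\ref{assump:heegner}) and the conditions on $(N,D)$ in Assumption~\ref{assump:*}, this formula expresses the value as a nonzero explicit constant times $\log_{\omega_{f_D}}^2(P)$, where $P \in E_D(K) \otimes \mathbb{C}_p$ is a Heegner point coming from a CM point on the associated Shimura curve. The non-vanishing of the $p$-adic $L$-value then forces $\log_{\omega_{f_D}}(P) \neq 0$, so $P$ is non-torsion in $E_D(K)$.

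Third, with a non-torsion Heegner point in hand, Kolyvagin's theorem (via the Euler system of Heegner points, \cite{kolyvagin1989finiteness}) implies that $E_D(K)$ has Mordell--Weil rank exactly $1$ and that the relevant Shafarevich--Tate group is finite; this yields the desired conclusion that $E_D/K$ has rank $1$. Note that the Heegner hypothesis also guarantees that the sign of the functional equation for $L(E_D/K, s)$ is $-1$, so rank $1$ is the expected outcome on the analytic side.

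The main obstacle is the bookkeeping of hypotheses in the BDP formula in the residually reducible setting, and in particular verifying that the Euler factors introduced by $p$-depletion and by stabilization at the primes $\ell \in X$ do not themselves vanish modulo $p$; this is exactly where Proposition~\ref{p5.1} together with Corollary~\ref{c5.1.1} and the conditions $\ell \not\equiv 1 \pmod{3}$ built into Assumption~\ref{assump:*} are used. Once these non-vanishing checks are in place, the implication from non-vanishing of the $p$-adic $L$-value to rank $1$ is a direct application of BDP followed by Kolyvagin.
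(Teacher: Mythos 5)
Your proposal matches the paper's argument essentially step for step: the congruence from Proposition~\ref{p6.2}, the $p$-adic Waldspurger/BDP-type formula (the paper cites Kriz's Theorem~9.10) expressing the value in terms of the $p$-adic logarithm of a Heegner point, the non-vanishing of the Euler factors via Corollary~\ref{c5.1.1}, and finally Kolyvagin's theorem to upgrade ``non-torsion Heegner point'' to ``rank exactly $1$.'' The only cosmetic difference is that you write the formula with $\log^2$ of the point while the paper writes $\log_{E_D}(P_K)$ without the square, which does not affect the non-vanishing argument.
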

\begin{proof}
We have $S_{\mathbb{N}_K\chi_D}^\flat (E_{2}^{\chi_D,\chi_D[pZ]})=L_{p}(E_2^{\chi_D,\chi_D[pZ]},\mathbb{N}_K\chi_D)$ and $S_{\mathbb{N}_K\chi_D}^\flat (f_D^{[pZ]})=L_{p}(f_D^{[pZ]},\mathbb{N}_K\chi_D)$. From Proposition~\ref{p6.2}, $L_p(E_2^{\chi_D,\chi_D[pZ]},\mathbb{N}_K\chi_D)\equiv L_p(f_D^{[pZ]},\mathbb{N}_K\chi_D)\pmod{p}$. By \cite[Theorem~9.10]{krizsupersingular}, $L_{p}(f_D,\mathbb{N}_K\chi_D) = \Omega(A,t)  \Xi_p(f_D,\mathbb{N}_K\chi_D) \log_{E_D}(P_K)
$, where $P_K$ is a Heegner point. Now suppose $S_{\chi}^{\flat}(E_{2}^{\chi_D,\chi_D [pZ]})\not\equiv 0\pmod{p}$. By Corollary~\ref{c5.1.1}, due to the Heegner hypothesis, none of the Euler factors vanish, and thus $L_{p}(w,\mathbb{N}_K\chi_D)\not\equiv 0\pmod{p}\implies  L_{p}(w,\mathbb{N}_K\chi_D)\ne 0$. It follows that $\log_{E_D}(P_K)\ne 0$, and hence $P_K$ is not a torsion point, and it follows that $E_D/K$ has positive rank. A theorem due to Kolyvagin \cite{kolyvagin1989finiteness} (for example, see \cite[Theorem~2.9]{darmon2006heegner}) implies that in fact $E_D/K$ has rank exactly $1$.
\end{proof}

Adopting the notation from \cite[Theorem~9.11]{krizsupersingular}, for $D>0$ we have that \begin{align*}
    S_{\mathbb{N}\chi_D} (E_{2}^{\chi_D,\chi_D})&=L_{p,\alpha}(0,(\chi_{D})_K),\\
    &=\Omega(A,t)\frac{\Xi_p(0,(\chi_D)_K)}{g(\chi_D)}\sum_{\mathfrak{a}\in\mathcal{C}\ell(\mathcal{O}_K)}(\chi^{-1}\mathbb{N}_K)(\mathfrak{a})\sum_{a=1}^{N-1}\chi_D^{-1}(a)\log_p g_a(\mathfrak{a}\star (A,t)).
\end{align*}

By Theorem~\ref{t2.13}, this sum is equal to $\mathcal{L}_p(0,\chi_D\chi_K\omega)\mathcal{L}_p(1,\chi_D)$, where $\mathcal{L}_p$ is the Katz $p$-adic $L$-function. By \cite[Theorem~5.11]{washington1997introduction}, we have that $$\mathcal{L}_p(0,\chi_D\chi_K\omega)\mathcal{L}_p(1,\chi_D)=-B_{1,\chi_D\chi_K}\mathcal{L}_p(1,\chi_D)\equiv B_{1,\chi_D\chi_K}B_{1,\chi_D\omega^{-1}}\pmod{p},$$ so we conclude that $$S_{\mathbb{N}\chi_D} (E_{2}^{\chi_D,\chi_D})\equiv B_{1,\chi_D\chi_K}B_{1,\chi_D\omega^{-1}}\pmod{p}.$$

If $D<0$ then $\chi_D$ is odd, so $$\Omega(A,t)\frac{\Xi_p(0,(\chi_D)_K)}{g(\chi_D)}\sum_{\mathfrak{a}\in\mathcal{C}\ell(\mathcal{O}_K)}(\chi^{-1}\mathbb{N}_K)(\mathfrak{a})\sum_{a=1}^{N-1}\chi_D^{-1}(a)\log_p g_a(\mathfrak{a}\star (A,t))=\mathcal{L}_p(0,\chi_D\omega)\mathcal{L}_p(1,\chi_D\chi_K)$$ instead. By \cite[Theorem~5.11]{washington1997introduction} we have that $$\mathcal{L}_p(0,\chi_D\omega)\mathcal{L}_p(1,\chi_D\chi_K)=-B_{1,\chi_D}\mathcal{L}_p(1,\chi_D\chi_K)\equiv B_{1,\chi_D}B_{1,\chi_D\chi_K\omega^{-1}}\pmod{p}.$$ This implies that for $D<0$, we have $$S_{\mathbb{N}\chi_D}(E_2^{\chi_D,\chi_D})\equiv B_{1,\chi_D}B_{1,\chi_D\chi_K\omega^{-1}}\pmod{p}.$$

Hence 
$$S_{\mathbb{N}\chi_D}(E_2^{\chi_D,\chi_D})\equiv 
\begin{cases}
B_{1,\chi_D\omega^{-1}}B_{1,\chi_D\chi_K}\pmod{p} & D>0,\\
B_{1,\chi_D\chi_K\omega^{-1}}B_{1,\chi_D}\pmod{p} & D<0.\\

\end{cases}
$$

For $D>0$, this turns out to be $h_{\mathbb{Q}(\sqrt{-3D})}h_{\mathbb{Q}(\sqrt{D\Delta_K})}$. For $D<0$, this turns out to be $h_{\mathbb{Q}(\sqrt{-3DD_K})}h_{\mathbb{Q}(\sqrt{D})}$.

\begin{prop}
\label{p6.5}
$$S_{\mathbb{N}\chi_D}(E_2^{\chi_D,\chi_D})\equiv 
\begin{cases}
h_{\mathbb{Q}(\sqrt{-3D})}h_{\mathbb{Q}(\sqrt{D\Delta_K})}\pmod{p} & D>0,\\
h_{\mathbb{Q}(\sqrt{-3D\Delta_K})}h_{\mathbb{Q}(\sqrt{D})}\pmod{p} & D<0.\\

\end{cases}
$$
\end{prop}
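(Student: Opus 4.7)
The plan is to convert each generalized Bernoulli number appearing in the two already-derived congruences
\[
S_{\mathbb{N}\chi_D}(E_2^{\chi_D,\chi_D}) \equiv
\begin{cases} B_{1,\chi_D\omega^{-1}}\, B_{1,\chi_D\chi_K} \pmod{3} & D > 0, \\ B_{1,\chi_D\chi_K\omega^{-1}}\, B_{1,\chi_D} \pmod{3} & D < 0, \end{cases}
\]
into the class number of an imaginary quadratic field, by way of the analytic class number formula $h_F = -\tfrac{w_F}{2} B_{1,\chi_F}$, which reads $h_F = -B_{1,\chi_F}$ once $w_F = 2$. The proof is then essentially a bookkeeping exercise, provided the four Dirichlet characters in sight are correctly identified as the Kronecker characters of the four imaginary quadratic fields $\mathbb{Q}(\sqrt{-3D})$, $\mathbb{Q}(\sqrt{D\Delta_K})$, $\mathbb{Q}(\sqrt{D})$, and $\mathbb{Q}(\sqrt{-3D\Delta_K})$.

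First I would pin down the Teichmüller character at $p = 3$: since $\omega$ is the unique nontrivial character with values in $\mathbb{F}_3^\times = \{\pm 1\}$, it equals the Legendre symbol modulo $3$, i.e.\ the Kronecker character $\chi_{-3}$ of $\mathbb{Q}(\sqrt{-3})$, so $\omega^{-1} = \omega = \chi_{-3}$. Next I would invoke multiplicativity of Kronecker characters: up to square-conductor adjustments that do not affect the associated Dirichlet $L$-function or Bernoulli number, $\chi_a\chi_b$ is the Kronecker character of $\mathbb{Q}(\sqrt{ab})$. Applied to our setting this gives, for $D > 0$, that $\chi_D\omega^{-1}$ and $\chi_D\chi_K$ are the Kronecker characters of $\mathbb{Q}(\sqrt{-3D})$ and $\mathbb{Q}(\sqrt{D\Delta_K})$ respectively, and for $D < 0$, that $\chi_D$ and $\chi_D\chi_K\omega^{-1}$ are attached to $\mathbb{Q}(\sqrt{D})$ and $\mathbb{Q}(\sqrt{-3D\Delta_K})$. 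Parity and sign checks then confirm each character is odd and each radical is negative (using $\Delta_K < 0$), so every one of these fields is imaginary quadratic; substituting $B_{1,\chi_F} = -h_F$ in each factor and observing that the two minus signs cancel yields exactly the stated products of class numbers.

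The step requiring the most care is verifying $w_F = 2$ for all four fields. If any one of them coincided with $\mathbb{Q}(\sqrt{-3})$ (where $w_F = 6$), the corresponding $B_{1,\chi_{-3}} = -\tfrac{1}{3}$ would fail to be $3$-integral and the mod $3$ congruence would not even make sense as written. Under Assumption~\ref{assump:*}, together with the running nonvanishing hypothesis $3 \nmid h_{\mathbb{Q}(\sqrt{-3D})}$ for $D > 0$ or $3 \nmid h_{\mathbb{Q}(\sqrt{D})}$ for $D < 0$ posited in the remainder of \S\ref{subsec:varyK}, these exceptional configurations of $(D, K)$ are excluded from the admissible range, so the analytic class number formula applies in its clean form to each factor and the proposition follows.
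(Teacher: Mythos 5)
Your proposal is correct and follows essentially the same route as the paper: the paper's own argument is precisely to take the congruence $S_{\mathbb{N}\chi_D}(E_2^{\chi_D,\chi_D})\equiv B_{1,\chi_D\omega^{-1}}B_{1,\chi_D\chi_K}$ (resp.\ $B_{1,\chi_D\chi_K\omega^{-1}}B_{1,\chi_D}$) derived just before the proposition, identify $\omega=\chi_{-3}$ and the products of Kronecker characters with the quadratic fields $\mathbb{Q}(\sqrt{-3D})$, $\mathbb{Q}(\sqrt{D\Delta_K})$, $\mathbb{Q}(\sqrt{D})$, $\mathbb{Q}(\sqrt{-3D\Delta_K})$, and apply $B_{1,\chi_F}=-h_F$ so that the signs cancel. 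Your attention to the exceptional fields with $w_F\neq 2$ is a point the paper glosses over entirely ("this turns out to be"), so that extra care is welcome rather than a deviation.
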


We now turn to calculating the proportion of $\Delta_K$ such that $p=3\nmid h_{\mathbb{Q}(\sqrt{-3D})}h_{\mathbb{Q}(\sqrt{D\Delta_K})}$; we will address the other case shortly after.

We assume the Heegner hypothesis. This requires that $\Delta_K$ is a quadratic residue modulo all primes dividing $\text{cond}(E_D)$, except for $3$. Letting $N=\text{cond}(E)$ and $\gcd(N,D)=1$ such that $2\nmid ND$, then $\text{cond}(E_D)=ND^2$. Furthermore, $v_3(N)\ne 1$ (due to the conditions provided by Nakagawa-Horie in \cite[p.~21]{nakagawa1988elliptic} or \cite[Lemma~2.2]{byeon2004class}). Recall that Assumption~\ref{assump:*} denotes the follow conditions on $(N,D)$:
\begin{itemize}
    \item For all primes $\ell>3$, if $v_\ell(N)=1$, then $\ell\equiv 2\pmod{3}$,
    \item $\gcd (N,D)=1$,
    \item $2\nmid ND$,
    \item $v_3(N)\ne 1$.
\end{itemize}
\begin{theorem}
\label{t6.6}
For fixed $N,D$ with $D>0$ satisfying Assumption~\ref{assump:*} and $3\nmid h_{\mathbb{Q}(\sqrt{-3D})}$, the proportion of $\Delta_K$ such that $3\nmid h_{\mathbb{Q}(\sqrt{D\Delta_K})}$ is at least $2^{-1-\omega(ND/3^{v_3(ND)})}$.
\end{theorem}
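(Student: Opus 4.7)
The strategy is to split the lower bound into two independent density contributions: one from the quadratic residue conditions built into the Heegner hypothesis for $E_D$, and one from the Davenport--Heilbronn--type theorem of Nakagawa--Horie (cited as~\cite{nakagawa1988elliptic}), which guarantees that a positive proportion of fundamental discriminants in any admissible arithmetic progression have class number coprime to $3$.

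First I would compute the density of $\Delta_K$ satisfying the Heegner hypothesis (Assumption~\ref{assump:heegner}) relative to $E_D$. Since $E_D$ has conductor $ND^2$ and, by Assumption~\ref{assump:*}, $2\nmid ND$ and $\gcd(N,D)=1$, the Heegner hypothesis amounts to requiring that $\Delta_K$ be a nonzero quadratic residue modulo every odd prime $\ell\mid ND$. Setting aside the prime $3$, whose mod-$3$ or mod-$9$ constraints will be absorbed into the Nakagawa--Horie input, the remaining $\omega(ND/3^{v_3(ND)})$ quadratic residue conditions are independent and each restricts $\Delta_K$ to exactly half of the coprime residue classes modulo $\ell$. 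This contributes a density factor of $2^{-\omega(ND/3^{v_3(ND)})}$ among all imaginary quadratic discriminants.

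Next I would pass from $\Delta_K$ to the fundamental discriminant $d'$ of $\mathbb{Q}(\sqrt{D\Delta_K})$. Because the Heegner hypothesis forces $\gcd(D,\Delta_K)$ to have only $3$ as a possible common factor (every other prime $\ell\mid D$ satisfies $\ell\nmid\Delta_K$), the squarefree part of $D\Delta_K$ has controlled behavior, and as $\Delta_K$ varies over the admissible residue classes the corresponding $d'$ varies over a fixed union of arithmetic progressions modulo an integer depending only on $N$ and $D$. The assumption $3\nmid h_{\mathbb{Q}(\sqrt{-3D})}$ is precisely what fixes the mod-$3$ (or mod-$9$) behavior required to invoke Nakagawa--Horie in these progressions; their theorem then yields that at least $1/2$ of the fundamental discriminants in each admissible progression have class number indivisible by $3$. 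Multiplying the two densities produces the desired lower bound $2^{-1-\omega(ND/3^{v_3(ND)})}$.

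The principal obstacle I anticipate is the bookkeeping at the prime $3$: verifying that the arithmetic progressions cut out by the Heegner residue classes at primes $\ell\neq 3$ genuinely fall within the admissible families for Nakagawa--Horie, and checking that the condition $3\nmid h_{\mathbb{Q}(\sqrt{-3D})}$ yields precisely the mod-$3$ compatibility for $d'$ required by their hypothesis. A secondary technical point is that ``proportion of $\Delta_K$'' is defined as the limit over fundamental discriminants with $|\Delta_K|\le X$ as $X\to\infty$, so one must confirm that both density contributions survive uniformly in $X$ and that the two densities are genuinely independent (so that their product is a valid lower bound and not merely an upper heuristic).
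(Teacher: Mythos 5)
Your proposal follows essentially the same route as the paper: the paper likewise multiplies the density of $\Delta_K$ satisfying the quadratic-residue (Heegner) conditions at the primes $\ell\mid ND$ with $\ell\neq 3$ by the factor $\tfrac12$ coming from the Nakagawa--Horie/Byeon result on $3$-indivisibility of class numbers in admissible progressions (the paper invokes \cite[Lemma~2.2]{byeon2004class}). The only quantitative discrepancy is that the paper assigns each residue condition density $\frac{\ell+1}{2\ell}>\frac12$ (counting $\ell\mid\Delta_K$ as admissible), whereas your count of ``exactly half of the coprime residue classes'' gives $\frac{\ell-1}{2\ell}<\frac12$ and would therefore fall just short of the stated factor $2^{-\omega(ND/3^{v_3(ND)})}$.
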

\begin{proof}
For each $3\ne \ell|ND$, the proportion of $\Delta_K$ which are quadratic residues $\bmod \ell$ is $\frac{\ell+1}{2\ell}>\frac{1}{2}$. The number of such primes is $\omega(ND/3^{v_3(ND)})$, so the proportion of such $\Delta_K$ is greater than $\frac{1}{2^{\omega(ND/3^{v_3(ND)})}}$. Of this set $X$, \cite[Lemma~2.2]{byeon2004class} implies that $$\frac{|\{\Delta_K\in X\text{ and }3\nmid h_{\mathbb{Q}(\sqrt{D\Delta_K})}\}|}{|X|}\ge \frac{1}{2}.$$ Hence for a fixed $D$, the proportion of $\Delta_K$ which are quadratic residues modulo all $\ell|D$ and $3\nmid h_{\mathbb{Q}(\sqrt{D\Delta_K})}$ is at least $2^{-1-\omega(ND/3^{v_3(ND)})}$.
\end{proof}
We have the immediate
\begin{corollary}
\label{c6.6.1}
For fixed $N=\text{cond}(E)$ and $D>0$ satisfying Assumption~\ref{assump:*} and $3\nmid h_{\mathbb{Q}(\sqrt{-3D})}$, the proportion of imaginary quadratic fields $K$ which admit a quadratic twist of $E$ by the fixed $D$ is positive.
\end{corollary}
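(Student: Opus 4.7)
The plan is to combine Theorem~\ref{t6.6} with Propositions~\ref{p6.4} and~\ref{p6.5} to conclude that a positive proportion of imaginary quadratic fields $K$ yield $E_D/K$ of rank one. First I would observe that the hypothesis $3 \nmid h_{\mathbb{Q}(\sqrt{-3D})}$ together with Proposition~\ref{p6.5} (case $D > 0$) reduces the mod-$3$ non-vanishing of $S_{\mathbb{N}\chi_D}(E_2^{\chi_D,\chi_D})$ to the single condition $3 \nmid h_{\mathbb{Q}(\sqrt{D\Delta_K})}$, since the product $h_{\mathbb{Q}(\sqrt{-3D})}\, h_{\mathbb{Q}(\sqrt{D\Delta_K})}$ is nonzero modulo $3$ precisely when both factors are. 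Theorem~\ref{t6.6} then supplies a set of $\Delta_K$ of proportion at least $2^{-1-\omega(ND/3^{v_3(ND)})} > 0$ on which this holds, so there $S_{\mathbb{N}\chi_D}(E_2^{\chi_D,\chi_D}) \not\equiv 0 \pmod 3$.

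Next I would promote this non-vanishing of the unstabilized quantity to non-vanishing of the $p$-depleted, stabilized version $S_{\mathbb{N}\chi_D}^{\flat}(E_2^{\chi_D,\chi_D\,[pZ]})$ appearing in Proposition~\ref{p6.4}. By Proposition~\ref{p5.1}, each stabilization at a prime $\ell \mid N$ with $v_\ell(N) = 1$ multiplies the value by the Euler factor $1 - \chi^{-1}(\overline{\mathfrak{l}})$; the first bullet of Assumption~\ref{assump:*} ensures every such $\ell$ satisfies $\ell \equiv 2 \pmod 3$, so Corollary~\ref{c5.1.1} guarantees none of these Euler factors vanishes modulo $3$. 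The stabilized quantity is therefore a unit multiple modulo $3$ of the unstabilized one, and hence also nonzero modulo $3$ on the same set of $\Delta_K$.

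Finally I would check that the Heegner hypothesis relative to $E_D/K$, needed to invoke Proposition~\ref{p6.4}, is automatically satisfied for the $\Delta_K$ selected in Theorem~\ref{t6.6}: that theorem restricts to $\Delta_K$ which are quadratic residues modulo every prime $\ell \mid ND$ with $\ell \neq 3$, which is precisely Assumption~\ref{assump:heegner} for the conductor $ND^2$ of $E_D$ (the prime $3$ is handled separately by $v_3(N) \neq 1$, so no Heegner condition at $3$ is required for the stabilization argument). Proposition~\ref{p6.4} then yields that $E_D/K$ has rank exactly $1$ for each such $K$, proving the corollary. The argument is essentially bookkeeping; the only point requiring care is verifying that the residue conditions cut out by Theorem~\ref{t6.6} are compatible both with the Heegner hypothesis and with the non-vanishing of Euler factors under stabilization, and both compatibilities are guaranteed directly by Assumption~\ref{assump:*}.
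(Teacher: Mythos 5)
Your argument is correct, but it takes a heavier route than the paper intends for this particular corollary. In the paper, Corollary~\ref{c6.6.1} is an immediate consequence of Theorem~\ref{t6.6} alone: the set of discriminants $\Delta_K$ produced there is cut out by the quadratic-residue conditions modulo the primes $\ell \mid ND$, $\ell \neq 3$ (i.e.\ exactly the Heegner-type conditions under which $K$ ``admits'' the twist $E_D$, with the prime $3$ exempted because $v_3(N)\neq 1$), and the explicit lower bound $2^{-1-\omega(ND/3^{v_3(ND)})}$ is positive --- no $L$-value, stabilization, or rank input is needed, and in fact the hypothesis $3\nmid h_{\mathbb{Q}(\sqrt{-3D})}$ plays no role in mere admissibility. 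What you have written is instead essentially the paper's proof of Corollary~\ref{c6.6.2}: you combine Proposition~\ref{p6.5} and Theorem~\ref{t6.6} to get mod-$3$ nonvanishing of $S_{\mathbb{N}\chi_D}(E_2^{\chi_D,\chi_D})$, promote it through the stabilizations via Proposition~\ref{p5.1} and Corollary~\ref{c5.1.1} using the first bullet of Assumption~\ref{assump:*}, and then invoke Proposition~\ref{p6.4} to conclude rank $1$, from which admissibility follows a fortiori. This buys you the stronger rank statement (and your check that the residue conditions in Theorem~\ref{t6.6} are exactly the Heegner hypothesis for $\mathrm{cond}(E_D)=ND^2$ is the one observation the paper leaves implicit), at the cost of extra machinery and hypotheses that the corollary as stated does not require; the direct reading of Theorem~\ref{t6.6} gives the statement in one line.
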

We also immediately obtain information about the rank of $E_D$ over $K$.
\begin{corollary}
\label{c6.6.2}
For fixed $N,D$ with $D>0$ satisfying Assumption~\ref{assump:*} and $3\nmid h_{\mathbb{Q}(\sqrt{-3D})}$, the proportion of imaginary quadratic fields $K$ such that $E_D/K$ has rank $1$ is at least $2^{-1-\omega(ND/3^{v_3(ND)})}$.
\end{corollary}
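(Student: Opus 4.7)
The plan is to directly chain Proposition~\ref{p6.5}, Corollary~\ref{c5.1.1}, Theorem~\ref{t6.6}, and Proposition~\ref{p6.4}. By Proposition~\ref{p6.5}, for $D>0$ one has
$$S_{\mathbb{N}\chi_D}(E_2^{\chi_D,\chi_D}) \equiv h_{\mathbb{Q}(\sqrt{-3D})} \cdot h_{\mathbb{Q}(\sqrt{D\Delta_K})} \pmod{3},$$
so the standing hypothesis $3 \nmid h_{\mathbb{Q}(\sqrt{-3D})}$ reduces nonvanishing of the left-hand side modulo $3$ to the single condition $3 \nmid h_{\mathbb{Q}(\sqrt{D\Delta_K})}$.

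Next, I would transfer this nonvanishing to the stabilized quantity $S_{\mathbb{N}\chi_D}^{\flat}(E_2^{\chi_D,\chi_D\,[3Z]})$ that actually appears in Proposition~\ref{p6.4}. The stabilization multiplies $S_{\mathbb{N}\chi_D}$ by Euler factors of the form $1-\chi^{-1}(\overline{\mathfrak{l}})$ at the bad primes $\ell \mid Z$ (those with $v_\ell(N)=1$), and Corollary~\ref{c5.1.1} together with the Assumption~\ref{assump:*} condition that every such $\ell$ satisfies $\ell \equiv 2 \pmod 3$ ensures each factor is a $3$-adic unit. Hence nonvanishing modulo $3$ is preserved under stabilization, and the hypothesis of Proposition~\ref{p6.4} is equivalent to $3 \nmid h_{\mathbb{Q}(\sqrt{D\Delta_K})}$ together with the quadratic-residue conditions that constitute the Heegner hypothesis for $E_D/K$.

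Finally, I would invoke Theorem~\ref{t6.6}: under the standing hypotheses, the proportion of imaginary-quadratic discriminants $\Delta_K$ which are simultaneously quadratic residues modulo every prime $\ell \mid ND$ with $\ell \ne 3$ and satisfy $3 \nmid h_{\mathbb{Q}(\sqrt{D\Delta_K})}$ is at least $2^{-1-\omega(ND/3^{v_3(ND)})}$. The quadratic-residue half of this count is exactly Assumption~\ref{assump:heegner} for $E_D/K$ (noting $\mathrm{cond}(E_D) = ND^2$ and that primes with $v_\ell(ND^2)\ge 2$ require no depletion), so for each such $K$ Proposition~\ref{p6.4} delivers $\mathrm{rank}(E_D/K) = 1$. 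The main point requiring care is not any computation but the bookkeeping that the $\Delta_K$ produced by Theorem~\ref{t6.6} really satisfy the Heegner and unit-Euler-factor hypotheses of Proposition~\ref{p6.4}; both are arranged by Assumption~\ref{assump:*}, so the corollary is immediate once these ingredients are assembled.
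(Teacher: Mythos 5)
Your proposal follows essentially the same route as the paper: the paper's proof likewise combines Theorem~\ref{t6.6} (the class-number and quadratic-residue count), the congruence of Proposition~\ref{p6.5}, and Proposition~\ref{p6.4} to conclude rank $1$. The only difference is that you make explicit the transfer from $S_{\mathbb{N}\chi_D}(E_2^{\chi_D,\chi_D})$ to the stabilized quantity via the unit Euler factors of Corollary~\ref{c5.1.1}, a bookkeeping step the paper leaves implicit, so your write-up is if anything slightly more careful.
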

\begin{proof}
Theorem~\ref{t6.6} implies that the proportion of $K$ with $3\nmid h_{\mathbb{Q}(\sqrt{-3D})}h_{\mathbb{Q}(\sqrt{D\Delta_K})}$ is at least $2^{-1-\omega(ND/3^{v_3(ND)})}$. Combining with the fact that $$S_{\mathbb{N}_{\chi_D}}(E_2^{\chi_D,\chi_D})\equiv h_{\mathbb{Q}(\sqrt{-3D})}h_{\mathbb{Q}(\sqrt{D\Delta_K})}\pmod{3},$$ we have that $3\nmid S_{\mathbb{N}_{\chi_D}}(E_2^{\chi_D,\chi_D})$. Now applying Proposition~\ref{p6.4}, we find that every such $K$ also satisfies that $E_D/K$ has rank $1$.
\end{proof}

We also address the $D<0$ case. Once again, let $N=\text{cond}(E)$ and $\gcd(N,D)=1$ with $2\nmid ND$, and $v_3(N)\ne 1$.
\begin{theorem}
\label{t6.7}
For fixed $N,D$ with $D<0$ satisfying Assumption~\ref{assump:*} and $3\nmid h_{\mathbb{Q}(\sqrt{D})}$, the proportion of $\Delta_K$ such that $3\nmid h_{\mathbb{Q}(\sqrt{-3D\Delta_K})}$ is at least $2^{-1-\omega(ND/3^{v_3(ND)})}$.
\end{theorem}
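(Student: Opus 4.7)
The plan is to mirror the proof of Theorem~\ref{t6.6} essentially verbatim, with the roles of the two class numbers interchanged, reflecting the $D<0$ factorization in Proposition~\ref{p6.5}. First I would impose the Heegner hypothesis: for each prime $\ell\neq 3$ with $\ell\mid ND$, the density of fundamental discriminants $\Delta_K$ that are quadratic residues modulo $\ell$ equals $\frac{\ell+1}{2\ell}>\frac{1}{2}$. The conditions in Assumption~\ref{assump:*} guarantee $2\nmid ND$ and $v_3(N)\neq 1$, so the primes to be handled are exactly the $\omega(ND/3^{v_3(ND)})$ primes other than $3$ dividing $ND$. Independence across distinct primes then gives a density of at least $2^{-\omega(ND/3^{v_3(ND)})}$ for the set $X$ of allowed $\Delta_K$.

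Within $X$, I would invoke the Nakagawa--Horie/Byeon nonvanishing result \cite[Lemma~2.2]{byeon2004class}, applied this time to the family of imaginary quadratic fields $\mathbb{Q}(\sqrt{-3D\Delta_K})$ as $\Delta_K$ ranges over the fundamental discriminants in $X$. Since $D<0$ we have $-3D>0$, and $\Delta_K<0$ then forces $-3D\Delta_K<0$, so this family genuinely consists of imaginary quadratic fields and is in the regime where \cite[Lemma~2.2]{byeon2004class} applies. The congruence restrictions packaged into Assumption~\ref{assump:*} (namely $2\nmid ND$, $v_3(N)\neq 1$, and $\ell\equiv 2\pmod 3$ for primes $\ell>3$ with $v_\ell(N)=1$) were chosen precisely so that the Nakagawa--Horie arithmetic progression hypotheses are met for this twisted family, just as in the $D>0$ case; thus at least half of $\Delta_K\in X$ satisfy $3\nmid h_{\mathbb{Q}(\sqrt{-3D\Delta_K})}$.

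Intersecting the two conditions multiplies the densities, yielding that the proportion of $\Delta_K$ satisfying both the Heegner residue condition and $3\nmid h_{\mathbb{Q}(\sqrt{-3D\Delta_K})}$ is at least $\frac{1}{2}\cdot 2^{-\omega(ND/3^{v_3(ND)})}=2^{-1-\omega(ND/3^{v_3(ND)})}$, which is the claimed bound. The only step requiring any real care is verifying that the Nakagawa--Horie arithmetic-progression hypotheses transfer correctly under the substitution $D\mapsto -3D$ relative to the Theorem~\ref{t6.6} setting; this is a routine bookkeeping check of congruence conditions rather than a substantive obstacle, so I do not foresee any genuine difficulty beyond carefully mirroring the earlier argument.
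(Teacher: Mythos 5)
Your proposal follows essentially the same route as the paper's own proof: counting the quadratic-residue conditions at the $\omega(ND/3^{v_3(ND)})$ primes $\ell\neq 3$ dividing $ND$ to get a density of at least $2^{-\omega(ND/3^{v_3(ND)})}$, and then applying the Nakagawa--Horie/Byeon nonvanishing result \cite[Lemma~2.2]{byeon2004class} to the family $\mathbb{Q}(\sqrt{-3D\Delta_K})$ within that set to gain the extra factor of $\frac{1}{2}$. Your added check that $-3D\Delta_K<0$ (so the twisted family is genuinely imaginary) and your remark about verifying the arithmetic-progression hypotheses are sensible refinements, but the argument is the same as the paper's.
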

\begin{proof}
For each $3\ne \ell|ND$, the proportion of $D_K$ which are quadratic residues $\bmod{\ell}$ is $\frac{\ell+1}{2\ell}>\frac{1}{2}$. The number of such primes is $\omega(ND/3^{v_3(ND)})$, so the proportion of such $\Delta_K$ is greater than $2^{-\omega(ND/3^{v_3(ND)})}$. This set $X$ is given by a system of congruence conditions, modulo all $3\ne \ell|ND$. Of this set $X$, \cite[Lemma~2.2]{byeon2004class} implies that $$\frac{\left|\Delta_K\in X\text{ and }3\nmid h_{\mathbb{Q}(\sqrt{-3D\Delta_K})}\right|}{|X|}\ge \frac{1}{2}.$$ Hence for fixed $D$, the proportion of $\Delta_K$ which satisfy the Heegner hypothesis and $e\nmid h_{\mathbb{Q}(\sqrt{-3D\Delta_K})}$ is at least $2^{-1-\omega(ND/3^{v_3(ND)})}$.
\end{proof}
Once again, we find two corollaries.
\begin{corollary}
For fixed $N=\text{cond}(E)$ and $D<0$ satisfying Assumption~\ref{assump:*} and $3\nmid h_{\mathbb{Q}(\sqrt{D})}$, the proportion of imaginary quadratic fields $K$ which admit a quadratic twist of $E$ by the fixed $D$ is positive.
\end{corollary}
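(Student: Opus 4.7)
The plan is to derive this corollary as an immediate consequence of Theorem~\ref{t6.7}, in exact parallel with how Corollary~\ref{c6.6.1} is extracted from Theorem~\ref{t6.6}. I read the phrase ``$K$ admits a quadratic twist of $E$ by the fixed $D$'' to mean that $K$ is an imaginary quadratic field for which the Heegner hypothesis (Assumption~\ref{assump:heegner}) is satisfied relative to $E_D$, so that the Heegner-point construction used throughout \S\ref{subsec:varyK} is applicable.

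First I would unpack the Heegner hypothesis for the twist. Under Assumption~\ref{assump:*} one has $\gcd(N,D) = 1$ and $v_3(N) \ne 1$, hence $\text{cond}(E_D) = ND^2$, and the hypothesis amounts to the assertion that $\Delta_K$ is a quadratic residue modulo every prime $\ell \ne 3$ dividing $ND$ (primes $\ell \mid D$ appear in $ND^2$ to an even power so impose no further constraint, and the prime $3$ is handled separately via $v_3(N) \ne 1$, as in the Nakagawa--Horie setting). For each such $\ell$, the density of admissible $\Delta_K$ modulo $\ell$ is $(\ell+1)/(2\ell) > 1/2$, and since these are independent congruence conditions for distinct $\ell$, the joint density is at least $2^{-\omega(ND/3^{v_3(ND)})} > 0$.

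Second, I would observe that this positive lower bound is precisely the density of the auxiliary set $X$ introduced in the proof of Theorem~\ref{t6.7}. Therefore the proportion of $K$ admitting such a twist is bounded below by $2^{-\omega(ND/3^{v_3(ND)})} > 0$, and the corollary follows. Note that the hypothesis $3 \nmid h_{\mathbb{Q}(\sqrt{D})}$ is not actually needed for the positivity statement alone; it becomes relevant only when one wants to upgrade the conclusion from ``admits a twist'' to ``admits a rank-one twist,'' i.e., the $D<0$ analogue of Corollary~\ref{c6.6.2}, which would require also invoking Proposition~\ref{p6.5} and Proposition~\ref{p6.4}.

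There is no substantive obstacle here: the corollary is essentially a restatement of the positivity of the lower bound already established in Theorem~\ref{t6.7}. The only mild interpretive question is what exactly ``admits a quadratic twist'' is taken to mean; however, whether one reads it as ``the Heegner hypothesis holds for $E_D/K$'' or as the full package including the class-number nonvanishing condition of Theorem~\ref{t6.7}, both interpretations yield a positive proportion by the same theorem.
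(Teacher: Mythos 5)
Your proposal matches the paper's (implicit) argument: the corollary is stated there as an immediate consequence of Theorem~\ref{t6.7}, whose lower bound $2^{-1-\omega(ND/3^{v_3(ND)})}$ on the proportion of admissible $\Delta_K$ is positive, exactly as you argue. Your side remark that the hypothesis $3\nmid h_{\mathbb{Q}(\sqrt{D})}$ is only needed for the rank-one upgrade, not for mere positivity, is also consistent with how the paper separates this corollary from its rank-one companion.
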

\begin{corollary}
For fixed $N,D$ with $D<0$ satisfying Assumption~\ref{assump:*} and $3\nmid h_{\mathbb{Q}(\sqrt{D})}$, the proportion of imaginary quadratic fields $K$ such that $E_D/K$ has rank $1$ is at least $2^{-1-\omega(ND/3^{v_3(ND)})}$.
\end{corollary}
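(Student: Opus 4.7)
The plan is to mirror the proof of Corollary~\ref{c6.6.2} word for word, substituting in the $D<0$ ingredients. First I would invoke Theorem~\ref{t6.7}, which already gives the lower bound $2^{-1-\omega(ND/3^{v_3(ND)})}$ on the proportion of $\Delta_K$ (subject to Assumption~\ref{assump:*}) for which $3 \nmid h_{\mathbb{Q}(\sqrt{-3D\Delta_K})}$. Combined with the standing hypothesis $3 \nmid h_{\mathbb{Q}(\sqrt{D})}$, this guarantees the joint nonvanishing
\[
3 \nmid h_{\mathbb{Q}(\sqrt{-3D\Delta_K})} \cdot h_{\mathbb{Q}(\sqrt{D})}
\]
for the same proportion of $\Delta_K$.

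Next I would feed this into the mod-$3$ identity recorded in Proposition~\ref{p6.5}: in the case $D<0$,
\[
S_{\mathbb{N}\chi_D}(E_2^{\chi_D,\chi_D}) \equiv h_{\mathbb{Q}(\sqrt{-3D\Delta_K})} \cdot h_{\mathbb{Q}(\sqrt{D})} \pmod{3}.
\]
Thus for each $\Delta_K$ in the set produced above, the left-hand side is a $3$-adic unit, so $S_{\mathbb{N}\chi_D}^{\flat}(E_2^{\chi_D, \chi_D\,[pZ]}) \not\equiv 0 \pmod{3}$ (the $\flat$-stabilization only removes Euler factors at primes in $X$, all of which are nonvanishing mod $3$ under Assumption~\ref{assump:*} by Corollary~\ref{c5.1.1}).

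Finally, Proposition~\ref{p6.4} converts the nonvanishing of $S_{\mathbb{N}\chi_D}^{\flat}(E_2^{\chi_D,\chi_D\,[pZ]})$ mod $3$ into the nontriviality of $\log_{E_D}(P_K)$, whence the Heegner point $P_K$ is of infinite order, and Kolyvagin's theorem pins the rank of $E_D/K$ at exactly $1$. This yields the claimed lower bound $2^{-1-\omega(ND/3^{v_3(ND)})}$ on the proportion of imaginary quadratic fields $K$ with $\text{rank}\, E_D(K) = 1$. There is essentially no obstacle here; the entire content of the corollary has already been established in Theorem~\ref{t6.7}, Proposition~\ref{p6.5}, and Proposition~\ref{p6.4}, and the only job is to chain them together, so this really is a one-paragraph assembly.
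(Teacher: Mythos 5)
Your proposal matches the paper's own proof essentially verbatim: invoke Theorem~\ref{t6.7} for the proportion of $\Delta_K$ with $3\nmid h_{\mathbb{Q}(\sqrt{D})}h_{\mathbb{Q}(\sqrt{-3D\Delta_K})}$, combine with the $D<0$ case of Proposition~\ref{p6.5} to get nonvanishing of $S_{\mathbb{N}\chi_D}(E_2^{\chi_D,\chi_D})$ mod $3$, and conclude via Proposition~\ref{p6.4}. Your added remark about the stabilized Euler factors being units mod $3$ (via Corollary~\ref{c5.1.1}) is a detail the paper leaves implicit inside Proposition~\ref{p6.4}, and is correct.
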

\begin{proof}
Theorem~\ref{t6.7} implies that the proportion of $K$ with $3\nmid h_{\mathbb{Q}(\sqrt{D})}h_{\mathbb{Q}(\sqrt{-3D\Delta_K})}$ is at least $2^{-1-\omega(ND/3^{v_3(ND)})}$. Combining with the fact that $$S_{\mathbb{N}_{\chi_D}}(E_2^{\chi_D,\chi_D})\equiv h_{\mathbb{Q}(\sqrt{D})}h_{\mathbb{Q}(\sqrt{-3D\Delta_K})}\pmod{3},$$ we have that $3\nmid S_{\mathbb{N}_{\chi_D}}(E_2^{\chi_D,\chi_D})$. Now applying Proposition~\ref{p6.4}, we find that every such $K$ also satisfies that $E_D/K$ has rank $1$.
\end{proof}

\subsection{Varying $D$}
\label{subsec:varyD}
In this paper we will usually fix $D$ and vary $K$. Let us now fix $N$ and $K$ and vary $D$. The first result we have is considering the proportion of $D$ which satisfy the Heegner hypothesis.

\begin{theorem}
\label{t6.8}
For fixed $N,K$, the number of $0<D<X$ satisfying the Heegner hypothesis is asymptotic to $X/(\log X)^{1/2}$.
\end{theorem}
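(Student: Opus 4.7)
The plan is to convert the Heegner hypothesis into a condition on the prime factorization of $D$, then count positive integers all of whose prime divisors lie in a density $\tfrac{1}{2}$ set of primes using a Selberg-Delange (Landau-type) method.

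First I would make the reformulation precise. Under Assumption~\ref{assump:*} one has $\text{cond}(E_D)=ND^2$, so the Heegner hypothesis for $E_D/K$ requires $\Delta_K$ to be a quadratic residue modulo every prime dividing $ND$. For fixed $N$ and $K$, the finitely many conditions at primes $\ell\mid N$ are fixed; assuming $K$ already satisfies the Heegner hypothesis for $E/K$ (otherwise no $D$ qualifies and the count is zero), the remaining requirement is that every prime $\ell\mid D$ satisfies $\left(\tfrac{\Delta_K}{\ell}\right)=1$, equivalently that $\ell$ splits in $K/\mathbb{Q}$. Let $\mathcal{S}$ denote this set of primes, intersected with the odd primes coprime to $N$ to accommodate Assumption~\ref{assump:*}. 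By quadratic reciprocity, $\left(\tfrac{\Delta_K}{\ell}\right)$ depends only on $\ell$ modulo $|\Delta_K|$, and exactly half of the coprime residue classes satisfy $\left(\tfrac{\Delta_K}{\ell}\right)=1$, so Dirichlet's theorem on primes in arithmetic progressions implies that $\mathcal{S}$ has natural density $\tfrac{1}{2}$.

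The main analytic step is to apply a Selberg-Delange Tauberian theorem to count positive integers $D\leq X$ whose prime factors all lie in $\mathcal{S}$. The generating Dirichlet series
$$F(s)=\sum_{\substack{n\geq 1\\ p\mid n\Rightarrow p\in\mathcal{S}}}n^{-s}=\prod_{\ell\in\mathcal{S}}(1-\ell^{-s})^{-1}$$
compares to the Dedekind zeta function of $K$ via $F(s)^2=\zeta_K(s)\cdot H(s)$, where $\zeta_K(s)=\zeta(s)L(s,\chi_{\Delta_K})$ and $H(s)$ is an Euler product over the ramified and inert primes that is absolutely convergent and nonvanishing for $\mathrm{Re}(s)>\tfrac{1}{2}$. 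Since $\zeta_K(s)$ has a simple pole at $s=1$ and $L(1,\chi_{\Delta_K})\neq 0$, one obtains $F(s)\sim c_0(s-1)^{-1/2}$ as $s\to 1^+$. The Selberg-Delange theorem then yields $\#\{n\leq X : p\mid n\Rightarrow p\in\mathcal{S}\}\sim c\,X/(\log X)^{1/2}$ for an explicit constant $c>0$. Restricting further to $D$ odd and coprime to $N$ only removes finitely many Euler factors from $F(s)$, altering $c$ but not the order.

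The main obstacle is the analytic input: one must verify the factorization $F(s)^2=\zeta_K(s)H(s)$ together with the required holomorphy of $H$ on $\mathrm{Re}(s)>\tfrac{1}{2}$, and then invoke Selberg-Delange with the correct branch of $(s-1)^{-1/2}$. All of this is classical (cf.\ Landau's theorem on sums of two squares, which is precisely the case $K=\mathbb{Q}(i)$), so once the framework is in place the proof essentially reduces to a citation.
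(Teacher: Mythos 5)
Your proposal is correct and follows essentially the same route as the paper: reduce the Heegner hypothesis to the condition that every prime factor of $D$ lies in a density-$\tfrac12$ set of primes determined by $\left(\tfrac{\Delta_K}{\ell}\right)=1$, then apply a Tauberian theorem (the paper cites Serre's Wiener--Ikehara-based result, you invoke the equivalent Selberg--Delange/Landau machinery via $F(s)^2=\zeta_K(s)H(s)$) to get $\sim c\,X/(\log X)^{1/2}$. Your treatment of the fixed conditions at $\ell\mid N$ and the explicit constant $c$ is in fact slightly more careful than the paper's.
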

\begin{proof}
For each $3\ne \ell|ND$, we need $\Delta_K$ to be a quadratic residue $\bmod$ $\ell$. Let $\Delta_K=-2^{e}\prod_{i=1}^{M} p_i$ where the $p_i$ are distinct odd primes and $e\in\{0,2\}$. We need $$\left(\frac{-1}{\ell}\right)\prod_i \left(\frac{p_i}{\ell}\right)=1,$$ since the factor of $2$ is always a square. Quadratic reciprocity implies that $\left(\frac{p_i}{\ell}\right)=(-1)^{\frac{p_i-1}{2}\cdot \frac{\ell-1}{2}}\left(\frac{\ell}{p_i}\right)$ for each $p_i$. In particular, when $\ell$ is fixed, the sign depends only on $p_i$, and is also fixed. Then $\left(\frac{\Delta_K}{\ell}\right)=1$ is equivalent to the condition that an even number of the $(-1)^{\frac{p_i-1}{2}\cdot \frac{\ell-1}{2}}\left(\frac{\ell}{p_i}\right)$ are $-1$, and thus in the product $$\left(\frac{\Delta_K}{\ell}\right)=\prod_i (-1)^{\frac{p_i-1}{2}\cdot \frac{\ell-1}{2}}\left(\frac{\ell}{p_i}\right),$$ it suffices to allow anything in the first $M-1$ indices, and the last index is determined in order to yield a product of $1$. Note that if $\ell=p_i$ for any $i$, then it is always a quadratic residue. Thus at most $\frac{p_i-1}{2p_i}$ of $\ell$ have $\Delta_K$ not a quadratic residue $\pmod{\ell}$, and it follows that the proportion of $\ell$ with $\Delta_K$ a quadratic residue $\pmod{\ell}$ is at least $\frac{1}{2}$. Even stronger, $\Delta_K$ is a quadratic residue $\pmod{\ell}$ whenever $\bar{\ell}\in S\subset \mathbb{Z}/\Delta_K'\mathbb{Z}$ where $\Delta_K'=\Delta_K$ or $4\Delta_K$, and $|S|>|\Delta_K'|/2$. It follows that $D$ can only be constructed from such primes.

From the Wiener-Ikehara Tauberian theorem \cite[Theorem~2.4]{serre1974divisibilite}, we find that the proportion of $D<X$ such that $D$ is constructed from this set of primes is asymptotic to $X/(\log X)^{1/2}$.

\end{proof}

Suppose we let $D$ vary and count the proportion of pairs $(D,K)$ (equivalently pairs $(D,\Delta_K)$) such that $E_D/K$ has rank $1$. Corollary~\ref{c6.6.1} and Corollary~\ref{c6.6.2} imply that the proportion of $\Delta_K$ for fixed $D$ depends only on $D$, and in particular, on the number of prime factors of $D$. Thus it suffices to consider only positive $D$, as the sign does not matter. Consider the interval $0<D<X$. Then we seek to measure $$F(X)=\frac{1}{X}\sum_{D=1}^{X}2^{-\omega(D)}.$$ By summing over values of $\omega(n)$ instead, we have $$F(X)=\frac{1}{X}\sum_{n=0}^{\log X}2^{-n}\cdot\# \{D\le X| \omega(D)=n\}.$$ The final proportion will be at least $\frac{1}{2}F(X)$. We will now find asymptotic bounds for $F(X)$.

By Erd\"os-Kac \cite{erdos1940gaussian}, for $1\le n\le X$, $\omega(n)$ follows a Gaussian distribution with $\sigma=\sqrt{\log \log X}$ and $\mu=\log \log X$. Therefore we may assume that for sufficiently large $X$, $\omega$ may be approximated by a continuous distribution; we will assume that this continuous distribution is sufficiently accurate and measure $$F(X)\sim T(X)=\int_{0}^{X\cdot (Y-\log\log X)/\sqrt{\log \log X}}2^{-x}\cdot \frac{1}{\sigma \sqrt{2\pi}}e^{-\frac{1}{2}\left(\frac{x-\mu}{\sigma}\right)^2}\,dx,$$ where $Y=\max \{\omega(D)|D\le X\}$. Since for large $X$, it's clear that the upper bound exceeds $2X$, we may take $$S(X)=\int_{0}^{2X}2^{-x}\cdot \frac{1}{\sigma \sqrt{2\pi}}e^{-\frac{1}{2}\left(\frac{x-\mu}{\sigma}\right)^2}\,dx<T(X).$$

Substituting $y=2^{-x}$, we transform the following integral, which is $S(X)$ but extended from $-\infty$ to $\infty$: $$\int_{-\infty}^{\infty}2^{-x}\cdot \frac{1}{\sigma \sqrt{2\pi}}e^{-\frac{1}{2}\left(\frac{x-\mu}{\sigma}\right)^2}\,dx=e^{-(\log 2)\mu +\frac{(\log 2)^2\sigma^2}{2}}>S(X).$$ Let $\kappa = -\frac{(\log 2)^2}{2}+\log 2\approx 0.45$. Then substituting $\mu=\sigma^2=\log\log X$, we have $$S(X)=\frac{1}{(\log X)^\kappa}-A(X)-B(X),$$ where
\begin{align*}
    A(X)&=\int_{-\infty}^{0}2^{-x}\cdot \frac{1}{\sigma \sqrt{2\pi}}e^{-\frac{1}{2}\left(\frac{x-\mu}{\sigma}\right)^2}\,dx,\\
    B(X)&=\int_{2X}^{\infty}2^{-x}\cdot \frac{1}{\sigma \sqrt{2\pi}}e^{-\frac{1}{2}\left(\frac{x-\mu}{\sigma}\right)^2}\,dx.\\
\end{align*} We will now bound $A(X)$ and $B(X)$ (both of which are positive values, since the integrand is strictly positive).

\begin{lemma}
$A(X)<\frac{1}{(\log X)^{1/2}\sqrt{X}}$.
\end{lemma}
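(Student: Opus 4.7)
My approach is to combine the two exponential factors in the integrand via completing the square, thereby reducing $A(X)$ to a constant prefactor times a standard Gaussian tail probability. Writing $2^{-x} = e^{-x \log 2}$ and collecting exponents,
$$-x\log 2 - \frac{(x-\mu)^2}{2\sigma^2} = -\frac{(x-\mu')^2}{2\sigma^2} - \kappa \log\log X,$$
where $\mu' := \mu - \sigma^2 \log 2 = (1-\log 2)\log\log X$ (using $\mu = \sigma^2 = \log\log X$) and $\kappa = \log 2 - (\log 2)^2/2$ is the very constant appearing in the leading-term $(\log X)^{-\kappa}$ estimate displayed just above the lemma. Consequently
$$A(X) = (\log X)^{-\kappa} \cdot \Phi\bigl(-(1-\log 2)\sqrt{\log\log X}\bigr),$$
where $\Phi$ denotes the standard Gaussian CDF.

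Next I would apply the Mills-ratio tail estimate $\Phi(-t) \le \frac{1}{t\sqrt{2\pi}} e^{-t^2/2}$ for $t > 0$, with $t = (1-\log 2)\sqrt{\log\log X}$. The exponential factor $e^{-t^2/2}$ contributes $(\log X)^{-(1-\log 2)^2/2}$, which combines with the prefactor $(\log X)^{-\kappa}$ to yield exactly $(\log X)^{-1/2}$, exploiting the algebraic identity $\kappa + \tfrac{1}{2}(1-\log 2)^2 = \tfrac{1}{2}$. The Mills-ratio factor $1/t$ contributes an additional $(\log\log X)^{-1/2}$ decay.

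The chief obstacle is extracting the $\sqrt{X}$ savings in the denominator of the stated bound: the calculation above only yields $A(X) \le C(\log X)^{-1/2}(\log\log X)^{-1/2}$, which is far weaker than the claim. To obtain the $\sqrt{X}^{-1}$ factor one must go beyond the crude Gaussian substitution from Erd\"os-Kac. Indeed, since $\omega(n) \ge 0$ on $\mathbb{N}$, the true empirical distribution places \emph{no} mass on $(-\infty,0)$, so $A(X)$ measures nothing more than the error of replacing the discrete Erd\"os-Kac distribution by its continuous Gaussian approximation. A Berry--Esseen-type effective version of Erd\"os-Kac, applied to the tail $\#\{D \le X : \omega(D) \le 0\}$, should supply the required polynomial-in-$X$ saving; alternatively, the stated bound may be read as the qualitative assertion $A(X) = o\bigl((\log X)^{-\kappa}\bigr)$, which is all that the downstream asymptotic in $T(X) \sim (\log X)^{-\kappa}$ actually requires.
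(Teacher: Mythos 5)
Your computation is correct, and it is essentially a sharpened version of what the paper actually does: the paper bounds the exponent by discarding the negative linear term $-x(1-\log 2)$ after expanding $x\log 2-\frac{(x+N)^2}{2N}$ (with $N=\mu=\sigma^2=\log\log X$) and then integrates the half-line Gaussian, ending its proof with $A(X)<e^{-N/2}/\sqrt{N}=\frac{1}{(\log X)^{1/2}\sqrt{\log\log X}}$; you instead complete the square exactly, obtaining $A(X)=(\log X)^{-\kappa}\,\Phi\bigl(-(1-\log 2)\sqrt{\log\log X}\bigr)$, and then apply the Mills-ratio bound, which gives the same order $(\log X)^{-1/2}(\log\log X)^{-1/2}$ (with a slightly worse constant) and, moreover, shows this order is sharp, since the Mills ratio is asymptotically tight.

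The important point you have put your finger on is the $\sqrt{X}$ in the statement. Note that the paper's own proof never produces any power of $X$ either --- it concludes exactly with $e^{-N/2}/\sqrt{N}$ --- so the $\sqrt{X}$ in the displayed lemma (and in the subsequent bound for $S(X)$) is evidently a typo for $\sqrt{N}=\sqrt{\log\log X}$, not a stronger claim you are expected to prove. In fact your exact evaluation shows the statement as literally written is \emph{false} for large $X$: since $A(X)$ is by definition the Gaussian integral over $(-\infty,0)$, it is of exact order $(\log X)^{-1/2}(\log\log X)^{-1/2}$, which is far larger than $(\log X)^{-1/2}X^{-1/2}$. For the same reason, your proposed detour through an effective (Berry--Esseen) Erd\"os--Kac theorem is off-target: $A(X)$ measures a tail of the continuous Gaussian model itself, not the discrepancy between the model and the true distribution of $\omega$, so no input about $\omega$ can improve the bound. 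The correct resolution is simply to state the lemma with $\sqrt{\log\log X}$ in place of $\sqrt{X}$; this weaker bound is all the downstream argument needs, because $\kappa=\log 2-\frac{(\log 2)^2}{2}<\frac{1}{2}$ implies $(\log X)^{-1/2}(\log\log X)^{-1/2}=o\bigl((\log X)^{-\kappa}\bigr)$, so the main term in the estimate for $S(X)$ and in Theorem~\ref{t6.11} still dominates.
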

\begin{proof}
Take $$A(X)=\int_{0}^{\infty}2^x\cdot \frac{1}{\sigma \sqrt{2\pi}}e^{-\frac{1}{2}\left(\frac{x+\mu}{\sigma}\right)^2}\,dx.$$ Let $N=\mu=\sigma^2=\log\log X$. Then we can write $$A(X)=\frac{1}{N\sqrt{2\pi}}\int_{0}^{\infty}\text{exp}\left(x\log 2-\frac{x^2+2xN+N^2}{2N}\right)\,dx.$$ Now consider that $$x\log 2-\frac{x^2+2xN+N^2}{2N}=-\frac{x^2}{2N}-\frac{N}{2}-\frac{x(1-\log 2)}{N}<-\frac{x^2}{2N}-\frac{N}{2}.$$ As a result, we have that $A(X)<\frac{e^{-N/2}}{N\sqrt{2\pi}}I(X)$, where $I(X)=\int_{0}^{\infty}e^{-\frac{x^2}{2N}}\,dx$. But $I(X)$ can be solved using the well-known Poisson trick, which yields that $I(X)^2=2\pi N\implies I(X)=\sqrt{2\pi N}$, so we have that $A(X)<e^{-N/2}/\sqrt{N}$.
\end{proof}

With $B(X)$, we can calculate it almost exactly. 
\begin{lemma}
$B(X)<e^{-2X^2/\log\log X}$.
\end{lemma}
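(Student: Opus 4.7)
The plan is to bound the tail integral $B(X)$ by combining the exponential decay of $2^{-x}$ with the Gaussian decay, via the complete-the-square trick that already appeared in the derivation of $A(X)$. Write $N = \sigma^2 = \mu = \log\log X$ and $\alpha = 1 - \log 2 \in (0,1)$. A direct computation of the full exponent gives
\begin{equation*}
-x\log 2 - \frac{(x-N)^2}{2N} = -\frac{(x-\alpha N)^2}{2N} - \kappa N,
\end{equation*}
where $\kappa = \log 2 - (\log 2)^2/2$ is the same constant used above (recall $\kappa + \alpha^2/2 = 1/2$). Consequently, the integrand of $B(X)$ is a rescaled Gaussian of mean $\alpha N$ and variance $N$, multiplied by the prefactor $(\log X)^{-\kappa}$.

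Next, I would substitute $u = (x-\alpha N)/\sqrt{N}$ to convert $B(X)$ into a standard normal tail:
\begin{equation*}
B(X) = (\log X)^{-\kappa} \int_{a}^{\infty} \frac{1}{\sqrt{2\pi}} e^{-u^2/2}\, du, \qquad a = \frac{2X - \alpha N}{\sqrt{N}}.
\end{equation*}
Since $X \to \infty$ while $N = \log\log X$ grows very slowly, the lower limit $a \sim 2X/\sqrt{N}$ is enormous; the crude Gaussian tail estimate $\int_a^\infty (2\pi)^{-1/2} e^{-u^2/2}\,du \leq e^{-a^2/2}$ (valid for $a \geq 0$) then yields
\begin{equation*}
B(X) \leq (\log X)^{-\kappa}\, \exp\!\left(-\frac{(2X-\alpha N)^2}{2N}\right).
\end{equation*}

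The final step is to extract the dominant quadratic term. Expanding gives
\begin{equation*}
\frac{(2X-\alpha N)^2}{2N} = \frac{2X^2}{N} - 2\alpha X + \frac{\alpha^2 N}{2},
\end{equation*}
so the main contribution to the exponent is $-2X^2/N = -2X^2/\log\log X$, which overwhelms both the $(\log X)^{-\kappa}$ prefactor and the lower-order corrections once $X$ is sufficiently large. Absorbing these corrections into the dominant quadratic decay gives the claimed bound $B(X) < e^{-2X^2/\log\log X}$.

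The main obstacle is the linear-in-$X$ cross term $+2\alpha X$ that shows up after expanding $(2X-\alpha N)^2$. Without the $2^{-x}$ factor the Gaussian would be centered at $N$ instead of $\alpha N$, and the corresponding cross term would be $+2X$; the centering shift induced by $2^{-x}$ improves this only to $+2\alpha X$, not eliminating it. One must therefore be careful to quantify the regime of $X$ for which the quadratic decay $e^{-2X^2/\log\log X}$ genuinely dominates this linear correction (together with the $(\log X)^{-\kappa}$ and $e^{-\alpha^2 N/2}$ factors). Since only the asymptotic smallness of $B(X)$ is needed in the application (to show $F(X) \sim (\log X)^{-\kappa}$), it suffices to establish the bound for all sufficiently large $X$.
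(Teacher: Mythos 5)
Your route is essentially the paper's own: complete the square to recenter the Gaussian at $(1-\log 2)N$, pull out the factor $e^{-\kappa N}=(\log X)^{-\kappa}$ (note $\tfrac12\bigl(1-(1-\log 2)^2\bigr)=\kappa$, so your constant matches the paper's), bound the resulting normal tail --- you via the Chernoff estimate $\int_a^\infty (2\pi)^{-1/2}e^{-u^2/2}\,du\le e^{-a^2/2}$, the paper via the polar-coordinate (``Poisson'') trick applied to $J(X)^2$ --- and then expand $(2X-\alpha N)^2/(2N)$. Both computations land on the same exponent $-\tfrac{2X^2}{N}+2(1-\log 2)X-\tfrac{N}{2}$.

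The weak point is the final ``absorption'' step, and you are right to single out the cross term as the obstacle, but the absorption goes the wrong way: since $N=\log\log X\ll X$, the correction $2(1-\log 2)X-\tfrac N2$ is \emph{positive} for large $X$, so the bound you actually derive, $B(X)\le e^{-2X^2/N+2(1-\log 2)X-N/2}$, is strictly weaker than the claimed $e^{-2X^2/\log\log X}$; a positive correction cannot be absorbed into the quadratic decay, and a matching lower bound for the Gaussian tail shows the literal inequality of the lemma in fact fails for large $X$. You should be aware that the paper's proof commits exactly the same error, only less explicitly (its last line drops the term $2X(1-\log 2)$ as though it were negative), so your argument is on equal footing with the paper's. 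What both arguments genuinely establish is $B(X)\le e^{-(2+o(1))X^2/\log\log X}$, and, as you observe, this asymptotic smallness is all that is used in Theorem~\ref{t6.11}; a clean fix is simply to state the lemma with the exponent $-2X^2/\log\log X+2X$, or as $B(X)=O\bigl(e^{-X^2/\log\log X}\bigr)$.
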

\begin{proof}
We have $$B(X)=\frac{1}{N\sqrt{2\pi}}\int_{2X}^{\infty}e^{-x\log 2-\frac{(x-N)^2}{2N}}\,dx.$$ The exponent rearranges to $$-\frac{N}{2}(1-(1-\log 2)^2)-\frac{(x-N(1-\log 2))^2}{2N}\implies B(X)=\frac{e^{-\frac{N}{2}(1-(1-\log 2)^2)}}{N\sqrt{2\pi}}J(X),$$ where $$J(X)=\int_{2X}^{\infty}e^{-\frac{(x-(1-\log 2)N)^2}{2N}}\,dx=\int_{2X-(1-\log 2)N}^{\infty}e^{-x^2/2N}\,dx.$$ Using Poisson's trick once again, we find $$J(X)^2<2\pi\int_{2X-(1-\log 2)N}^{\infty}re^{-r^2/2N}\,dr=2\pi Ne^{-\frac{(2X-(1-\log 2)N)^2}{2N}}.$$ As a result, we conclude that $$\log B(X)<-\frac{N}{2}(1-(1-\log 2)^2)-\frac{(2X-(1-\log 2)N)^2}{2N}=-\frac{2X^2}{N}+2X(1-\log 2)-\frac{N}{2}<-\frac{2X^2}{N}.$$
\end{proof}

Putting the above two lemmas together, we conclude that $$\frac{1}{(\log X)^{\kappa}}-\frac{1}{(\log X)^{1/2}\sqrt{X}}-\frac{1}{e^{2X^2/\log\log X}}<S(X)<\frac{1}{(\log X)^{\kappa}},$$ which implies that
\begin{theorem}
\label{t6.11}
Assuming that $\omega$ is approximated by a (continuous) Gaussian distribution sufficiently well, in the set $\mathcal{D}_X=\{(D,\Delta_K)|\hspace{1mm}|D|<X, K \text{ an imaginary quadratic field}\}$, the proportion $P(X)$ of $\mathcal{D}_X$ (for $X\gg 0$) which yield a quadratic twist with rank $1$ over $K$ satisfies $$\frac{1}{2}\left(\frac{1}{(\log X)^{\kappa}}-\frac{1}{(\log X)^{1/2}\sqrt{X}}-\frac{1}{e^{2X^2/\log\log X}}\right)<P(X)<\frac{1}{2(\log X)^\kappa},$$ where $\kappa = \log 2-\frac{(\log 2)^2}{2}\approx 0.45$.
\end{theorem}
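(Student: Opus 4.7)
The plan is to assemble the ingredients already laid out just before the statement. By Corollary~\ref{c6.6.2} (for $D>0$) and its $D<0$ analog, for each admissible $D$ satisfying Assumption~\ref{assump:*} and the relevant class-number non-divisibility condition, the proportion of $\Delta_K$ for which $E_D/K$ has rank $1$ is at least $2^{-1-\omega(ND/3^{v_3(ND)})}$. Since the bound depends only on $|D|$ (not on the sign), and since $N$ is fixed so $\omega(ND/3^{v_3(ND)}) = \omega(D) + O_N(1)$, the first step would be to rewrite
$$P(X) \;\geq\; \tfrac{1}{2}F(X), \qquad F(X) \;=\; \frac{1}{X}\sum_{D=1}^{X} 2^{-\omega(D)},$$
by averaging the per-$D$ bound and absorbing the finite contribution from $N$ into an $X\to\infty$ limit.

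Next, I would group the sum defining $F(X)$ by the value of $\omega(D)$ and invoke Erd\"os--Kac~\cite{erdos1940gaussian}, under the theorem's stated hypothesis, to approximate the discrete distribution of $\omega$ on $[1,X]$ by a continuous Gaussian with mean $\mu = \log\log X$ and variance $\sigma^{2} = \log\log X$. This replaces $F(X)$ by the Gaussian integral $T(X)$ of the paper. The upper bound in the theorem then follows from extending the integration range to all of $\mathbb{R}$ and computing the moment generating function by completing the square: with $\mu = \sigma^{2} = \log\log X$, one has
$$\int_{-\infty}^{\infty} 2^{-x}\cdot \frac{1}{\sigma\sqrt{2\pi}}\exp\!\Bigl(-\tfrac{1}{2}\bigl(\tfrac{x-\mu}{\sigma}\bigr)^{2}\Bigr)\,dx \;=\; \exp\!\bigl(-\mu\log 2 + \tfrac{1}{2}\sigma^{2}(\log 2)^{2}\bigr) \;=\; (\log X)^{-\kappa},$$
with $\kappa = \log 2 - \tfrac{(\log 2)^{2}}{2}$.

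For the lower bound, I would restrict the integration to $[0, 2X]$, giving $S(X)$, and write $S(X) = (\log X)^{-\kappa} - A(X) - B(X)$, where $A(X)$ collects the discarded mass on $(-\infty, 0)$ and $B(X)$ the mass on $(2X, \infty)$. The two lemmas preceding the theorem already bound these: $A(X) < (\log X)^{-1/2}X^{-1/2}$ via shifting the Gaussian and applying the Poisson integral trick after completing the square, and $B(X) < \exp(-2X^{2}/\log\log X)$ by the same strategy but keeping the right tail. Combining $P(X) \geq \tfrac{1}{2}F(X) \geq \tfrac{1}{2}S(X)$ with the upper bound $P(X) \leq \tfrac{1}{2}(\log X)^{-\kappa}$ (coming from the matching upper bound on $T(X)$) then yields the stated two-sided inequality.

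The main obstacle is conceptual rather than computational: the Erd\"os--Kac theorem is a distributional statement, so the passage from the discrete sum $F(X)$ to the continuous integral $T(X)$ is only heuristically justified and is precisely where the hypothesis ``$\omega$ is approximated by a Gaussian distribution sufficiently well'' is invoked. A fully rigorous version would require effective quantitative versions of Erd\"os--Kac (such as a Berry--Esseen-type bound on the convergence rate), and would need to ensure that the error in replacing $\sum 2^{-\omega(D)}$ by its Gaussian approximation is dominated by $A(X) + B(X)$; this is the step where the argument is genuinely non-routine, while everything else is bookkeeping around the calculations already performed.
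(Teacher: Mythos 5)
Your proposal follows essentially the same route as the paper: reduce to $P(X)\geq \tfrac12 F(X)$ with $F(X)=\tfrac1X\sum_{D\le X}2^{-\omega(D)}$ via Corollary~\ref{c6.6.2}, invoke Erd\H{o}s--Kac to replace the sum by the Gaussian integral $T(X)$, compute the full-line integral by completing the square to get $(\log X)^{-\kappa}$, and control the two discarded tails exactly as in the lemmas giving $A(X)$ and $B(X)$. Your closing caveat about the discrete-to-continuous passage being only heuristically justified is precisely the caveat the paper builds into the hypothesis of the theorem, so the argument is correct to the same extent as the paper's own.
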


\section{Higher twists}
\label{sec:highertwists}
\subsection{Cubic twists}
\label{subsec:cubic}
Let $f$ be the modular form associated to $E$, an elliptic curve over $L=\mathbb{Q}(\sqrt{-3})$. We have that $f=\theta_{\psi}$ (of weight $2$) for some Hecke character $\psi$ of type $(1,0)$, where $$\theta_\psi=\sum_{\mathfrak{a}\subset \mathcal{O}_L}\psi(\mathfrak{a})q^{N(\mathfrak{a})}\in\theta_\psi\in \mathbb{Z}[[q]].$$ Let $\chi_d:\text{Gal}(L(\sqrt[3]{d})/L)\rightarrow \langle\zeta_3\rangle$ be the associated cubic twist character, where $\zeta_3=\frac{-1+\sqrt{-3}}{2}$ is a primitive third root of unity. Let $f_d$ be the modular form associated to $E_{d,3}$, the cubic twist of $E$ by $d$, such that $$f_d=\theta_{\psi \chi_d}=\sum_{\mathfrak{a}\subset\mathcal{O}_L}\psi\chi_d(\mathfrak{a})q^{N(\mathfrak{a})}\in \mathbb{Z}[[q]].$$
\begin{prop}
\label{p7.1}
The modular forms $f$ and $f_d$ are equivalent modulo $3$ at all coefficients except those which are not relatively prime to $Nd$.
\end{prop}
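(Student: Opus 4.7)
The plan is to write $f - f_d = \sum_{\mathfrak{a}\subset\mathcal{O}_L} \psi(\mathfrak{a})\bigl(1-\chi_d(\mathfrak{a})\bigr)q^{N(\mathfrak{a})}$ and show that, for $n$ coprime to $Nd$, the coefficient of $q^n$ lies in $3\mathbb{Z}$. The key observation is that $\chi_d$ takes values in the cubic roots of unity $\{1,\zeta_3,\zeta_3^2\}$, and $1-\zeta_3$ is the unique prime of $\mathbb{Z}[\zeta_3]$ above $3$ (the ramified prime in $L/\mathbb{Q}$), so $1-\zeta_3^k$ is divisible by $1-\zeta_3$ for every $k$. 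This divisibility, together with the integrality of the Fourier coefficients, will force the mod-$3$ congruence.

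In detail, first I would fix $n$ coprime to $Nd$. Since $\chi_d$ is a character on the Galois group $\mathrm{Gal}(L(\sqrt[3]{d})/L)$, it is well-defined and multiplicative on ideals coprime to its conductor, which divides $3d$; similarly $\psi$ is defined on ideals coprime to its conductor $\mathfrak{N}$ dividing $N$. The coprimality of $n$ with $Nd$ guarantees that every ideal $\mathfrak{a}$ with $N(\mathfrak{a})=n$ is coprime to both conductors, so $\chi_d(\mathfrak{a})\in\{1,\zeta_3,\zeta_3^2\}$ and $\psi(\mathfrak{a})\in\mathcal{O}_L\subset\mathbb{Z}[\zeta_3]$. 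Then the $n$-th coefficient of $f-f_d$ is
\[
c_n \;=\; \sum_{N(\mathfrak{a})=n} \psi(\mathfrak{a})\bigl(1-\chi_d(\mathfrak{a})\bigr),
\]
and each summand lies in the ideal $(1-\zeta_3)\mathbb{Z}[\zeta_3]$, hence so does $c_n$.

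Next, I would use that $f,f_d\in\mathbb{Z}[[q]]$ (stated just above in the setup) to conclude $c_n\in\mathbb{Z}$. Combining this with the previous step gives $c_n \in \mathbb{Z}\cap (1-\zeta_3)\mathbb{Z}[\zeta_3]$. Since $(1-\zeta_3)$ is a prime of $\mathbb{Z}[\zeta_3]$ of residue characteristic $3$ with residue field $\mathbb{F}_3$, its contraction to $\mathbb{Z}$ is exactly $3\mathbb{Z}$; equivalently one notes $(1-\zeta_3)(1-\zeta_3^2)=3$ so that multiplying any integer $c_n\in (1-\zeta_3)\mathbb{Z}[\zeta_3]$ by $(1-\zeta_3^2)$ lands in $3\mathbb{Z}[\zeta_3]\cap\mathbb{Z}=3\mathbb{Z}$, and one then divides by the unit-adjusted factor to recover $3\mid c_n$ in $\mathbb{Z}$. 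This yields the desired congruence $[q^n]f\equiv [q^n]f_d\pmod{3}$.

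The argument is largely formal; the only place to be careful is the ramification bookkeeping in the first step, ensuring that $\chi_d(\mathfrak{a})$ genuinely is a cubic root of unity for every $\mathfrak{a}$ in the sum. This requires that $\mathfrak{a}$ avoid the conductor of $\chi_d$, which is controlled by primes dividing $d$ (and possibly the ramified prime above $3$, absorbed into the hypothesis via $L=\mathbb{Q}(\sqrt{-3})$); the hypothesis $\gcd(n,Nd)=1$ handles this cleanly. No other obstacle arises, and in particular one does not need to compute the coefficients via a splitting-behavior case analysis on primes $\ell$ in $L$ — the uniform divisibility $1-\zeta_3^k \in (1-\zeta_3)\mathbb{Z}[\zeta_3]$ takes care of all cases simultaneously.
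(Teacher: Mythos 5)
Your proof is correct and follows essentially the same route as the paper's: both arguments rest on $\chi_d(\mathfrak{a})\equiv 1\pmod{(1-\zeta_3)}$ for $\mathfrak{a}$ prime to the conductor, giving $[q^n](f-f_d)\in(1-\zeta_3)\mathcal{O}_L$ for $\gcd(n,Nd)=1$, and then on the contraction $(1-\zeta_3)\mathcal{O}_L\cap\mathbb{Z}=3\mathbb{Z}$ together with $f,f_d\in\mathbb{Z}[[q]]$. Your coefficient-by-coefficient formulation is if anything slightly cleaner than the paper's power-series-ring phrasing.
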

\begin{proof}
For all $n\in\mathbb{Z}$ with $\gcd(n,\text{cond}(\chi_d))=1$, we have $\chi_d(n)\equiv 1\pmod{\zeta_3-1}$. As a result, for all $n$ coprime to $\text{cond}(\chi_d)\cdot \text{cond}(E)=Nd$, we have $$\psi(n)\equiv \psi\chi_d(n)\bmod{(\zeta_3-1)\mathcal{O}_L[[q]]},$$ and thus we have $f\equiv f_d\bmod{(\zeta_3-1)\mathcal{O}_L[[q]]}$ except at the coefficients of $q^n$ for $\gcd(n,Nd)\ne 1$. Since $(\zeta_3-1)\mathcal{O}_L\cap \mathbb{Z}=(3)\mathbb{Z}$ and $f,f_d\in\mathbb{Z}[[q]]$, it follows that $f-f_d\in\mathbb{Z}[[q]]$ and $f-f_d-G(q)\in(\zeta_3-1)\mathcal{O}_L[[q]]$ for some $G(q)\in\mathbb{Z}[[q]]$ with $G(q)=\sum_{\gcd(n,Nd)>1}g_n q^n$. Therefore $f-f_d-G(q)\in\mathbb{Z}[[q]]$, and hence $$f-f_d-G(q)\in(\zeta_3-1)\mathcal{O}_L[[q]]\cap \mathbb{Z}[[q]]=(3)\mathbb{Z}[[q]].$$ As a result, $f-f_d\equiv G(q)\pmod{3}$, and therefore $f\equiv f_d\pmod{3}$ except at coefficients of $q^n$ for $n$ not relatively prime to $Nd$.
\end{proof}

\begin{prop}
\label{p7.2}
The Galois representations of $f$ and $f_d$ are isomorphic$\mod 3$.
\end{prop}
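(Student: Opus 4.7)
The plan is to mirror the Brauer--Nesbitt plus Chebotarev argument used in Propositions \ref{p4.1} and \ref{p4.2}. Since $\rho_f$ and $\rho_{f_d}$ are two-dimensional Galois representations, up to semisimplification they are determined by the characteristic polynomials of $\frob_\ell$, equivalently by trace and determinant. Because the Frobenius elements are dense in the Galois group by Chebotarev, it suffices to match trace and determinant modulo $3$ on $\frob_\ell$ for $\ell$ in a density-one set; the natural choice here is the set of $\ell$ coprime to $3Nd$.

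For the determinant, both $f$ and $f_d$ are weight $2$ modular forms attached to elliptic curves (the CM theta series $\theta_\psi$ and $\theta_{\psi\chi_d}$), so both $\det \rho_f$ and $\det \rho_{f_d}$ are equal to the $3$-adic cyclotomic character. Hence the determinants agree integrally, a fortiori modulo $3$. For the trace, at any prime $\ell$ of good reduction for both $E$ and $E_{d,3}$, we have $\text{tr}(\rho_f(\frob_\ell)) = a_\ell(f)$ and $\text{tr}(\rho_{f_d}(\frob_\ell)) = a_\ell(f_d)$, and Proposition \ref{p7.1} gives precisely the congruence $a_\ell(f) \equiv a_\ell(f_d) \pmod{3}$ for all $\ell$ coprime to $Nd$. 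Combining these two ingredients closes the argument on the chosen density-one set.

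The main subtlety I expect is the standard semisimplification caveat attached to Brauer--Nesbitt: the theorem only produces an isomorphism of semisimplifications, not of the raw residual representations. Fortunately, Assumption \ref{assump:residuallyreducible} already asserts residual reducibility modulo $3$, and in this CM setting one in fact expects $\rho_f \bmod 3$ and $\rho_{f_d} \bmod 3$ to split as direct sums of characters coming from the underlying Hecke characters $\psi$ and $\psi\chi_d$ (indeed $\chi_d$ takes values in $\langle \zeta_3\rangle$ and is trivial mod $(\zeta_3-1)$, so after restriction to $G_L$ the two split representations become literally equal mod $3$ on ideals coprime to $Nd$). The semisimplified congruence is therefore exactly the statement of the proposition, and the only care required is to verify that the density-one set of good primes is genuinely enough input for Chebotarev --- which it is, since the excluded primes form a finite set.
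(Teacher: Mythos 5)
Your proposal is correct and follows essentially the same route as the paper's proof: matching traces modulo $3$ on Frobenius elements away from $Nd$ via Proposition~\ref{p7.1}, noting both determinants are the cyclotomic character, and invoking Brauer--Nesbitt together with Chebotarev density. Your additional remark on the semisimplification caveat is a fair clarification of a point the paper leaves implicit (its own argument, like yours, really produces an isomorphism up to semisimplification), but it does not change the substance of the argument.
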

\begin{proof}
Let $\rho_E:\gal(\bar{\mathbb{Q}}/\mathbb{Q})\rightarrow GL(\varprojlim E[n])\cong \prod_{p}GL_2(\mathbb{Z}_p)$ be the Galois representation of $E$. Let $\rho_d$ be the Galois representation of $E_{d,3}$, the cubic twist of $E$ by $d$. Let $N$ be the conductor of $E$, so that $Nd$ is the conductor of $E_{d,3}$. Then Neron-Ogg-Shafarevich implies that $\rho$ and $\rho_d$ are unramified outside of $N$ and $Nd$, respectively. As a result, the Galois representations factor through $\gal(\mathbb{Q}^{(N)}/\mathbb{Q})$ and $\gal(\mathbb{Q}^{(Nd)}/\mathbb{Q}$, respectively, where $\mathbb{Q}^{(n)}$ is the maximal unramified extension of $\mathbb{Q}$ outside of $n$. Now for all primes $\ell\nmid Nd$, the Artin map gives a Frobenius element $\frob_{\ell}$ such that $\text{tr }\frob_{\ell,E} =[q^{\ell}]f$ and $\text{tr }\frob_{\ell,E_{d,3}} [q^{\ell}]f_d$. The prior discussion shows that $$f\equiv f_d\pmod{3}\implies \text{tr }\frob_{\ell,E}\equiv \text{tr }\frob_{\ell,E_{d,3}} \pmod{3}.$$ Furthermore, the Frobenius elements always satisfy $\det \frob_\ell = \ell$. The Brauer-Nesbitt theorem applied to $\rho$ implies that $\rho$ and $\rho_d$ are characterized (up to isomorphism) by their characteristic polynomials, and thus by the trace and determinant functions. We showed that $\rho$ and $\rho_d$ agree on the trace and determinant functions modulo $3$ for all Frobenius elements $\frob_\ell$ with $\ell\nmid Nd$. By the Cebotarev density function, the Frobenius elements have density $1$ in the Galois groups, and therefore all but finitely many of the Frobenius elements are dense in the Galois group. Since trace and determinant are continuous functions, this implies that $\rho$ and $\rho_d$ agree modulo $3$ on trace and determinant on the entire Galois group, and thus they agree modulo $3$ everywhere (by Brauer-Nesbitt). As a result, we find that $\rho\equiv \rho_d\pmod{3}$.
\end{proof}

\begin{prop}
\label{p7.3}
If $S_{\mathbb{N}_K}^{\flat}(f^{[Nd]})\not\equiv 0\pmod{3}$, then $E_{d,3}/K$ has rank $1$.
\end{prop}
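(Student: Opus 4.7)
The plan is to adapt the strategy of Proposition~\ref{p6.4} to the cubic twist setting. The first step is to pass from $f$ to $f_d$ modulo $3$: Proposition~\ref{p7.1} shows $[q^n]f \equiv [q^n]f_d \pmod{3}$ for every $n$ with $\gcd(n, Nd) = 1$. After $Nd$-depletion, the coefficients at indices sharing a common factor with $Nd$ are killed on both sides, so as $q$-expansions we obtain
\[
f^{[Nd]} \equiv f_d^{[Nd]} \pmod{3}.
\]
Since the pairing $S_{\mathbb{N}_K}^{\flat}$ is determined by the $q$-expansion (this is the content of Propositions~\ref{p6.1}--\ref{p6.3}, whose proofs apply verbatim here by continuity of $S_{\chi}^{\flat}$ in $\chi$), this congruence descends to
\[
S_{\mathbb{N}_K}^{\flat}(f^{[Nd]}) \equiv S_{\mathbb{N}_K}^{\flat}(f_d^{[Nd]}) \pmod{3}.
\]

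Next I would invoke the $p$-adic Waldspurger formula exactly as in Proposition~\ref{p6.4}: the analog of \cite[Theorem~9.10]{krizsupersingular} applied to the modular form $f_d$ attached to $E_{d,3}$ identifies
\[
S_{\mathbb{N}_K}^{\flat}(f_d^{[Nd]}) \;=\; L_{p}(f_d^{[Nd]}, \mathbb{N}_K) \;=\; \Omega(A,t)\,\Xi_p(f_d, \mathbb{N}_K)\,\log_{E_{d,3}}(P_K),
\]
where $P_K$ is the Heegner point on $E_{d,3}$ associated to $K$. Assuming the Heegner hypothesis for $E_{d,3}/K$, each bad prime $\ell \mid Nd$ with $\ell \not\equiv 1 \pmod{3}$ contributes an Euler factor that is a unit modulo $3$, exactly as in Corollary~\ref{c5.1.1}; the period $\Omega(A,t)$ and the interpolation factor $\Xi_p$ are also $p$-adic units under our running hypotheses. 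Combining these with the nonvanishing hypothesis $S_{\mathbb{N}_K}^{\flat}(f^{[Nd]}) \not\equiv 0 \pmod{3}$, we conclude $\log_{E_{d,3}}(P_K) \neq 0$, so $P_K$ is non-torsion and $E_{d,3}/K$ has positive rank. Kolyvagin's theorem \cite{kolyvagin1989finiteness} then forces the rank to be exactly $1$.

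The main obstacle is ensuring that the cubic-twist analog of the $p$-adic Waldspurger formula is set up with the correct Heegner point: one must verify that the point appearing on the analytic side genuinely lives on $E_{d,3}(K)$ (as opposed to $E(K)$ or an isogenous curve), and that the Euler factor computation at primes $\ell \mid Nd$ still gives a $3$-adic unit in the cubic setting. Both are mild reformulations of the computations in \S\ref{sec:congruences}, and once they are in place, the rest of the proof is the same Euler-factor bookkeeping plus a Heegner-point/Kolyvagin argument as in Proposition~\ref{p6.4}.
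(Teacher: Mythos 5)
Your proposal is correct and follows essentially the same route as the paper: transfer the congruence $f \equiv f_d \pmod{3}$ away from $Nd$ through the depleted $S_{\mathbb{N}_K}^{\flat}$ pairing, apply the $p$-adic Waldspurger formula of \cite[Theorem~9.10]{krizsupersingular} to $f_d$ with the Euler-factor nonvanishing from Corollary~\ref{c5.1.1}, and conclude $\log_{E_{d,3}}(P_K)\neq 0$, hence rank exactly $1$ by Kolyvagin. The only difference is that you spell out the Euler-factor and period considerations slightly more explicitly than the paper, which simply defers to the argument of Proposition~\ref{p6.4}.
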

\begin{proof}
By Proposition~\ref{p6.2}, we have that $$S_{\mathbb{N}_K}^{\flat}(f^{[Nd]})\equiv S_{\mathbb{N}_K}^{\flat}(f_d^{[Nd]})\pmod {3}.$$ Following \cite[Definition~8.8]{krizsupersingular}, we have $S_{\mathbb{N}_K}^{\flat}(f^{[Nd]})=L_p(f^{[Nd]},\mathbb{N}_K)$ and $S_{\mathbb{N}_K}^{\flat}(f_d^{[Nd]})=L_p(f_d^{[Nd]},\mathbb{N}_K)$. From \cite[Theorem~9.10]{krizsupersingular}, we have $$L_p(f_d^{[Nd]},\mathbb{N}_K)=\omega(A,t)\Xi_p(f_d^{[Nd]},\mathbb{N}_K)\log_{E_{d,3}}(P_K),$$ where $P_K$ is a Heegner point. As a result, if $S_{\mathbb{N}_K}^{\flat}(f^{[Nd]})\not\equiv 0\pmod{3}$, then this implies that $$S_{\mathbb{N}_K}^{\flat}(f_d^{[Nd]})\equiv S_{\mathbb{N}_K}^{\flat}(f^{[Nd]})\not\equiv 0\pmod{3}\implies S_{\mathbb{N}_K}^{\flat}(f_d^{[Nd]})\ne 0\implies \log_{E_{d,3}}(P_K)\ne 0.$$ Now applying Proposition~\ref{p6.4}, we find that $E_{d,3}/K$ has rank exactly $1$.
\end{proof}

We assume the Galois representation $\rho_E$ modulo $3$ is reducible. By Proposition~\ref{p4.2}, we have that $\rho_E\cong \chi_M\oplus \chi_M\chi$ up to semisimplification.

\subsection{Sextic twists}
\label{subsec:sextic}
Consider the family of elliptic curves $y^2=x^3+c$ over $\mathbb{Q}$ for $c\in\mathbb{Q}$ up to isomorphism; denote this by $\mathcal{C}_c$. This family of elliptic curves has $j$-invariant $j=0$. The \textbf{sextic twist} by $D$, $g_{6,D}(\mathcal{C}_c)$, is the elliptic curve given by $$y^2=x^3+cD\cong_\mathbb{Q} (D^3y)^2=(D^2x)^3+cD^7\cong_\mathbb{Q} y^2=x^3+cD^7,$$ where $E_1\cong_F E_2$ denotes that $E_1$ is isomorphic to $E_2$ over $F$.  Thus the sextic twist by $D$ is a function $$g_{6,D}:\mathcal{C}_c\mapsto \mathcal{C}_{cD^7}.$$ The quadratic twist by $D$ on $\mathcal{C}_c$, denoted by $g_{2,D}(\mathcal{C}_c)$, is the curve $$Dy^2=x^3+c\cong_\mathbb{Q}(D^2y)^2=(Dx)^3+cD^3\cong_\mathbb{Q} y^2=x^3+cD^3,$$ so $$g_{2,D}(\mathcal{C}_c)=\mathcal{C}_{cD^3}.$$ The cubic twist by $D$ on $\mathcal{C}_c$, denoted by $g_{3,D}(\mathcal{C}_c)$, is the curve $$y^2=Dx^3+c\cong_\mathbb{Q}(Dy)^2=(Dx)^3+cD^2\cong_\mathbb{Q} y^2=x^3+cD^2,$$ so $$g_{3,D}(\mathcal{C}_c)=\mathcal{C}_{cD^2}.$$ We easily check that $g_{6,D}(\mathcal{C}_c)=\mathcal{C}_{cD^7}=g_{2,D}(\mathcal{C}_{cD^4})=g_{2,D}(g_{3,D^2}(\mathcal{C}_c))=g_{23,D}\circ g_{3,D}\circ g_{3,D}(\mathcal{C}_c)$.

As a result, we have that $$g_{6,D}=g_{2,D}\circ g_{3,D}\circ g_{3,D}=g_{3,D^2}\circ g_{2,D},$$ and it's clear that these functions commute.
\begin{prop}
\label{p8.1}
The family of curves $\mathcal{C}_c$ are exactly the elliptic curves which admit cubic twists.
\end{prop}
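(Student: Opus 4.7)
The plan is to identify cubic twists with a twisted action of the automorphism group of $E$ over $\overline{\mathbb{Q}}$, and then show that only the curves $\mathcal{C}_c$ (those with $j$-invariant $0$) have a large enough automorphism group to admit a cubic twist.

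First, I would recall that for an elliptic curve $E/\overline{\mathbb{Q}}$ with $\mathrm{char}\ne 2,3$, the automorphism group $\mathrm{Aut}(E_{\overline{\mathbb{Q}}})$ is cyclic of order $2$, $4$, or $6$, depending on whether $j(E) \notin \{0,1728\}$, $j(E) = 1728$, or $j(E) = 0$, respectively. Twists of $E$ over $\mathbb{Q}$ are classified by $H^1(\mathrm{Gal}(\overline{\mathbb{Q}}/\mathbb{Q}), \mathrm{Aut}(E_{\overline{\mathbb{Q}}}))$, and a \emph{cubic} twist (one whose twisting cocycle factors through a cyclic cubic extension and acts via an order-$3$ automorphism) can exist only when $\mathrm{Aut}(E_{\overline{\mathbb{Q}}})$ contains an element of order $3$. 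Among the three possibilities above, this happens precisely when $j(E) = 0$.

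Next, I would verify that $j(E) = 0$ forces $E$ to be $\mathbb{Q}$-isomorphic to a member of the family $\mathcal{C}_c$. Starting from any short Weierstrass model $y^2 = x^3 + ax + b$, the condition $j = 1728 \cdot \tfrac{4a^3}{4a^3+27b^2} = 0$ is equivalent to $a = 0$, which leaves $y^2 = x^3 + b$, i.e. a curve of the form $\mathcal{C}_c$ with $c = b$.

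Conversely, I would exhibit the cubic twist explicitly for each $\mathcal{C}_c$: the map $y^2 = x^3 + c \mapsto y^2 = Dx^3 + c$ given in the paragraph immediately preceding the proposition (which, after the change of variables used there, is $\mathbb{Q}$-isomorphic to $\mathcal{C}_{cD^2}$) shows that every $\mathcal{C}_c$ admits a genuine cubic twist by any $D \in \mathbb{Q}^\times$ whose class in $\mathbb{Q}^\times/(\mathbb{Q}^\times)^3$ is nontrivial, corresponding to the order-$3$ automorphism $(x,y) \mapsto (\zeta_3 x, y)$ of $E_{\overline{\mathbb{Q}}}$.

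The main obstacle is the bookkeeping to make precise what ``admits a cubic twist'' means as opposed to just ``admits a twist by a cubic extension'': the latter applies to every elliptic curve via the quadratic-twist-pullback, whereas the former should give a genuinely new $\mathbb{Q}$-isomorphism class of curves $\overline{\mathbb{Q}}$-isomorphic to $E$ for each cubic class. The clean way to state this is via the cohomological classification above, after which the $j$-invariant computation and the explicit family $\mathcal{C}_c$ match up without further difficulty.
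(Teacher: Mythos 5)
Your proposal is correct and follows essentially the same route as the paper: both arguments hinge on the fact that an order-$3$ automorphism of $E_{\overline{\mathbb{Q}}}$ exists precisely when $j(E)=0$, combined with the observation that $j=0$ in short Weierstrass form forces $a=0$. The only cosmetic difference is that you phrase the existence criterion cohomologically via $H^1(\gal(\overline{\mathbb{Q}}/\mathbb{Q}),\mathrm{Aut}(E_{\overline{\mathbb{Q}}}))$, whereas the paper routes it through ``CM by $\mathbb{Q}(\zeta_3)$ iff $\mathrm{Aut}(E)=\mu_6$'' citing Silverman; your version is, if anything, slightly more precise about what ``admits a cubic twist'' means.
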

\begin{proof}
This family is precisely the family of elliptic curves with Weierstrass form $y^2=x^3+ax+b$ with $a=0$; in particular, this is exactly the family of elliptic curves with $j$-invariant $0$, since $j(E)=-1728\frac{(4a)^3}{\Delta}$, where $\Delta(E)=-16(4a^3+27b^2)$ (see \cite[p.~45]{silverman2009arithmetic}). 

On the other hand, an elliptic curve $E$ admits a cubic twist iff it has CM by $\mathbb{Q}(\sqrt{-3})=\mathbb{Q}(\zeta_3)$. Since the underlying field has characteristic $0$, by \cite[cor.~III.10.2]{silverman2009arithmetic}, this is equivalent to $\text{Aut}(E)=\mu_6=\langle \zeta_6\rangle \iff j(E)=0$.
\end{proof}

Due to this, the family $\mathcal{C}_c$ are the only elliptic curves which interest us.

\begin{prop}
\label{p8.2}
The Galois representation of any of the curves $\mathcal{C}_c$ is residually reducible modulo $3$.
\end{prop}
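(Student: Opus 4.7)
The plan is to exhibit an explicit Galois-stable line in $E[3]$ for the curve $E = \mathcal{C}_c : y^2 = x^3 + c$, which immediately yields a one-dimensional subrepresentation of $\rho_E \bmod 3$ and hence reducibility.

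First I would compute the third division polynomial. For a Weierstrass curve $y^2 = x^3 + Ax + B$, we have $\psi_3(x) = 3x^4 + 6Ax^2 + 12Bx - A^2$; specializing to $A = 0$ and $B = c$ gives
$$\psi_3(x) = 3x^4 + 12cx = 3x(x^3 + 4c).$$
Thus $x = 0$ is an $x$-coordinate of a non-trivial $3$-torsion point. Substituting into the curve equation gives $y^2 = c$, so the points $P = (0, \sqrt{c})$ and $-P = (0, -\sqrt{c})$ are of order $3$.

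Next I would observe that the order-$3$ subgroup $C = \{O, P, -P\} \subset E[3]$ is Galois-stable. Indeed, for any $\sigma \in \gal(\overline{\mathbb{Q}}/\mathbb{Q})$, the image $\sigma(P)$ must be a $3$-torsion point whose $x$-coordinate is $\sigma(0) = 0$, so $\sigma(P) \in \{P, -P\} \subset C$. Viewing $C$ as a one-dimensional $\mathbb{F}_3$-subspace of $E[3] \cong \mathbb{F}_3^2$, this shows that $\rho_E \bmod 3$ preserves a line and therefore admits a one-dimensional quotient, i.e.\ it is reducible (the Galois action on $C$ is through the quadratic character attached to $\mathbb{Q}(\sqrt{c})/\mathbb{Q}$).

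There is not really a main obstacle here: the argument is essentially the observation that the flex $(0, \pm\sqrt{c})$ is defined over an at-worst quadratic extension, so Galois cannot mix it with the other $3$-torsion points. If one wanted to be fully uniform, one should note that the argument works whether or not $c$ is already a square in the base field (if $\sqrt{c}$ is rational, the subgroup $C$ is pointwise fixed; otherwise Galois acts on it by $\pm 1$), and either way $C$ is stable. The conclusion is that $\rho_E \bmod 3$ is reducible for every $c$, as desired.
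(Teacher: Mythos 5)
Your proof is correct, and it takes a different route from the paper's. The paper argues via CM: since each $\mathcal{C}_c$ has CM by $L=\mathbb{Q}(\sqrt{-3})$ and $3$ ramifies in $L$, the kernel $E[(\sqrt{-3})]$ of the prime above $3$ is an order-$3$ subgroup defined over the Hilbert class field of $L$ (which is $L$ itself), and an argument with complex conjugation fixing the ideal $(\sqrt{-3})$ shows this subgroup is stable under $\gal(\overline{\mathbb{Q}}/\mathbb{Q})$, giving reducibility. You instead exhibit the invariant line explicitly: the third division polynomial $3x(x^3+4c)$ shows $(0,\pm\sqrt{c})$ are $3$-torsion, and since Galois preserves the $x$-coordinate $0$, the subgroup $\{O,(0,\sqrt{c}),(0,-\sqrt{c})\}$ is Galois-stable, with Galois acting on it through the quadratic character of $\mathbb{Q}(\sqrt{c})/\mathbb{Q}$. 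In fact the two arguments single out the same subgroup --- $\{O,(0,\pm\sqrt{c})\}$ is precisely $E[(\sqrt{-3})]$, the unique $\mathrm{End}(E)$-stable line in $E[3]$ --- but your version is more elementary and self-contained (no appeal to CM theory, class fields, or the complex-conjugation descent step), and it has the bonus of identifying the sub-character explicitly as $\chi_{\mathbb{Q}(\sqrt{c})}$. The paper's CM argument is less computational and fits the surrounding framework (the same ideal $(\sqrt{-3})$ and the CM field $\mathbb{Q}(\sqrt{-3})$ already govern the cubic-twist characters in \S\ref{subsec:cubic}), but for the statement at hand your direct computation is fully adequate.
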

\begin{proof}
Since this family of curves admits cubic twists, they have CM by $L=\mathbb{Q}(\sqrt{-3})$. The prime $(3)$ is ramified in $L/\mathbb{Q}$, so let $(\sqrt{-3})\subset \mathcal{O}_L$ be the prime lying over $(3)$. Then $E[(\sqrt{-3})]\coloneqq \{x\in E| [a]x=0\forall  a\in (\sqrt{-3})\}=\{x\in E|[\sqrt{-3}]x=0\}$ is a group of order $3$. This group is defined over the Hilbert class field of $L$, which is $L$ since the class number of $L$ is $1$. Now, the group $\gal(L/\mathbb{Q})$ is generated by $\sigma$, the automorphism given by complex conjugation. Since $\sigma((\sqrt{-3}))=(\sqrt{-3})$, it follows that $E[(\sqrt{-3})]$ is defined over $L\cap \mathbb{R}=\mathbb{Q}$. As a result, $E[(\sqrt{-3})]\subsetneq E[3]$ is a subgroup preserved by $\gal(\overline{\mathbb{Q}}/\mathbb{Q})$, and thus $\rho_{\mathcal{C}_c}$ is residually reducible modulo $3$.
\end{proof}

Let $N=\text{cond}(\mathcal{C}_c)$ and let $D$ be some positive integer satisfying Assumption~\ref{assump:*}.
\begin{theorem}
\label{t8.3}
For a fixed $c$ (and thus $N$) and $D>0$ satisfying Assumption~\ref{assump:*} and $3\nmid h_{\mathbb{Q}(\sqrt{-3D})}$, the proportion of imaginary quadratic fields $K$ such that $g_{6,D}(\mathcal{C}_c)/K$ has rank $1$ is at least $2^{-1-\omega\left(ND/3^{v_3(ND)}\right)}$.
\end{theorem}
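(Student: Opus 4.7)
The plan is to reduce the sextic-twist statement to the quadratic-twist machinery already developed in \S\ref{subsec:varyK}. Using the decomposition $g_{6,D}=g_{3,D^2}\circ g_{2,D}$, set $E\coloneqq\mathcal{C}_c$, $E_D\coloneqq g_{2,D}(E)$, and $E^{\sharp}\coloneqq g_{6,D}(E)=g_{3,D^2}(E_D)$, with associated modular forms $f$, $f_D$, and $f^{\sharp}$. By Proposition~\ref{p8.2} the representation $\rho_{E}$ is residually reducible modulo $3$, so Proposition~\ref{p4.2} gives $f\equiv E_{2}^{\chi_M,\chi_M}\pmod{3}$ for some quadratic character $\chi_M$. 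Quadratic twisting then yields $f_D\equiv E_{2}^{\chi_D,\chi_D}\pmod{3}$ at all coefficients coprime to $ND$, where $\chi_D=\chi_M\chi_{D'}$ as in \S\ref{subsec:varyK}. Crucially, by Proposition~\ref{p7.2} the further cubic twist by $D^2$ preserves the residual Galois representation modulo $3$, so $f^{\sharp}\equiv f_D\equiv E_{2}^{\chi_D,\chi_D}\pmod{3}$ at every coefficient $[q^n]$ with $\gcd(n,ND)=1$.

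Next, I would apply Proposition~\ref{p6.3} to the pair $(f^{\sharp},E_{2}^{\chi_D,\chi_D})$, $p$-depleting at the bad primes $Z=\prod_{\ell\in X}\ell$ (where $X$ is the set of primes with $v_{\ell}(ND^2)=1$). By Assumption~\ref{assump:*} every such $\ell$ satisfies $\ell\not\equiv 1\pmod{3}$, and Corollary~\ref{c5.1.1} ensures the corresponding Euler factors are units modulo $3$; this produces the congruence
$$S_{\mathbb{N}_K\chi_D}^{\flat}\bigl(f^{\sharp\,[pZ]}\bigr)\equiv S_{\mathbb{N}_K\chi_D}^{\flat}\bigl(E_{2}^{\chi_D,\chi_D\,[pZ]}\bigr)\pmod{3}.$$
Proposition~\ref{p6.5} identifies the right-hand side modulo $3$ with $h_{\mathbb{Q}(\sqrt{-3D})}\,h_{\mathbb{Q}(\sqrt{D\Delta_K})}$ (since $D>0$). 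Since $3\nmid h_{\mathbb{Q}(\sqrt{-3D})}$ by hypothesis, nonvanishing of this product modulo $3$ reduces to the condition $3\nmid h_{\mathbb{Q}(\sqrt{D\Delta_K})}$, and Theorem~\ref{t6.6} guarantees this holds (together with the Heegner hypothesis at all primes $\ell\mid ND$ with $\ell\ne 3$) for at least a proportion $2^{-1-\omega(ND/3^{v_3(ND)})}$ of all imaginary quadratic fields $K$.

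For each such $K$, the nonvanishing modulo $3$ lifts to genuine nonvanishing in $\mathbb{C}_p$. Then, arguing exactly as in the proofs of Proposition~\ref{p6.4} and Proposition~\ref{p7.3}, the formula \cite[Theorem~9.10]{krizsupersingular} expresses $L_p(f^{\sharp\,[pZ]},\mathbb{N}_K\chi_D)$ as a nonzero scalar multiple of $\log_{E^{\sharp}}(P_K)$, where $P_K$ is a Heegner point on $E^{\sharp}$ over $K$; thus $P_K$ has infinite order, and Kolyvagin's theorem \cite{kolyvagin1989finiteness} forces $\mathrm{rank}_{\mathbb{Z}}\,E^{\sharp}(K)=1$. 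The main technical point to verify is that \cite[Theorem~9.10]{krizsupersingular} applies uniformly to the sextic twist $E^{\sharp}=\mathcal{C}_{cD^7}$; this is unproblematic because $E^{\sharp}$ is still a CM elliptic curve by $L=\mathbb{Q}(\sqrt{-3})$, so the Heegner-point formalism, the anticyclotomic $p$-adic $L$-function, and the interpolation formula transfer verbatim, with the bad-prime contributions controlled by Corollary~\ref{c5.1.1} under Assumption~\ref{assump:*}. Granted this, the sextic theorem follows immediately from the mod-$3$ class number argument of \S\ref{subsec:varyK}, which is the heart of the proof.
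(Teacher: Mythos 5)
Your proposal is correct and follows essentially the same route as the paper: decompose $g_{6,D}=g_{3,D^2}\circ g_{2,D}$, observe that the cubic twist introduces no new bad primes (hence no new depletions) and preserves the mod-$3$ congruence with $E_2^{\chi_D,\chi_D}$, then invoke Theorem~\ref{t6.6} for the proportion of $\Delta_K$ and Proposition~\ref{p6.4} for the rank-$1$ conclusion. Your write-up merely makes explicit the congruence chain (via Propositions~\ref{p4.2}, \ref{p7.2}, \ref{p6.3}, \ref{p6.5}) that the paper's terser proof leaves implicit.
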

\begin{proof}
We have $\text{cond}(g_{6,D}(\mathcal{C}_c))=ND^t$ for some nonnegative integer $t$. Since $\text{cond}(g_{2,D}(\mathcal{C}_c))=ND^2$, it follows that the set of primes dividing $\text{cond}(g_{6,D}(\mathcal{C}_c))$ is a subset of the set of primes dividing $\text{cond}(g_{2,D}(\mathcal{C}_c))$. As a result, the subsequent cubic twist by $D^2$ yields an elliptic curve whose conductor does not have any new primes dividing it (compared to $ND^2$), and therefore does not require any more $\ell$-depletions.

As a result, any $K$ which admits a quadratic twist of $\mathcal{C}_c$ by $D$ will also admit a cubic twist by $D^2$. This occurs when $\Delta_K$ is a quadratic residue modulo all primes $\ell|ND$, and by Theorem~\ref{t6.6}, occurs for at least $(1/2)^{1+\omega\left(ND/3^{v_3(ND)}\right)}$ of the $\Delta_K$. Now applying Proposition~\ref{p6.4}, we conclude that every such $K$ also satisfies the property that $g_{6,D}(\mathcal{C}_c)/K$ has rank $1$.
\end{proof}
In particular, since every $\mathcal{C}_c$ is isomorphic to the sextic twist of $\mathcal{C}_1$ by $c$ (over a sufficient $K$), it is of particular interest to study $\mathcal{C}_1\coloneqq y^2=x^3+1$. Thus specializing Theorem~\ref{t8.3}, we have
\begin{corollary}
\label{c8.3.1}
For fixed $D>0$ with $3\nmid D$ and $3\nmid h_{\mathbb{Q}(\sqrt{-3D})}$, the proportion of imaginary quadratic fields $K$ such that $g_{6,D}(\mathcal{C}_1)/K$ has rank $1$ is at least $2^{-1-\omega(D)}$.
\end{corollary}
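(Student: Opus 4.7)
The corollary is a direct specialization of Theorem~\ref{t8.3} to $c = 1$, i.e., the curve $\mathcal{C}_1 : y^2 = x^3 + 1$. First, I would note that $\mathcal{C}_1$ has CM by $\mathbb{Q}(\sqrt{-3})$ and conductor $N = 36 = 2^2 \cdot 3^2$. With this value of $N$, the portions of Assumption~\ref{assump:*} that depend only on $N$ are automatic: no prime $\ell > 3$ divides $N$ (so the first bullet is vacuous); $v_3(N) = 2 \neq 1$; and the hypothesis $3 \nmid D$ together with the implicit $2 \nmid D$ (forced by $\gcd(N, D) = 1$) give $\gcd(N, D) = 1$ and $2 \nmid ND$.

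Next I would evaluate the exponent $\omega(ND/3^{v_3(ND)})$ appearing in Theorem~\ref{t8.3}. Under the above hypotheses we have $ND/3^{v_3(ND)} = 4D$, and a naive substitution would give $\omega(4D) = 1 + \omega(D)$, producing a proportion at least $2^{-2-\omega(D)}$. To recover the sharper bound $2^{-1-\omega(D)}$ stated in the corollary, I would revisit the proof of Theorem~\ref{t6.6} in the specific setting $N = 36$: the prime $\ell = 2$ has $v_2(N) = 2$, so no $\ell$-depletion is needed at $2$ (the Euler factor vanishes automatically, as discussed preceding Proposition~\ref{p6.1}), and consequently $\ell = 2$ does not enter the count of primes at which a genuine residue condition on $\Delta_K$ is imposed.

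Combining this refined count with the hypothesis $3 \nmid h_{\mathbb{Q}(\sqrt{-3D})}$ and invoking Proposition~\ref{p6.4} as in the proof of Theorem~\ref{t8.3}, one concludes that for at least $2^{-1-\omega(D)}$ of the imaginary quadratic fields $K$ the sextic twist $g_{6,D}(\mathcal{C}_1)/K$ has rank exactly $1$. The main obstacle is carefully justifying that the prime $2$ can indeed be removed from the congruence count; this requires inspecting the proof of Theorem~\ref{t6.6} directly rather than applying Theorem~\ref{t8.3} as a black box.
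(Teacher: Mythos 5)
The paper's own proof is a one-liner: it takes $N=\mathrm{cond}(\mathcal{C}_1)=27$, so that $ND/3^{v_3(ND)}=D$ (as $3\nmid D$) and Theorem~\ref{t8.3} gives the exponent $\omega(D)$ directly. You instead take $N=36$, and your conductor is in fact the correct one for $y^2=x^3+1$ (the conductor-$27$ curve with $j=0$ is $y^2=x^3+16$, not $\mathcal{C}_1$), so you have put your finger on a genuine discrepancy in the paper's one-line argument. However, your repair does not work as written. With $N=36$, your claim that Assumption~\ref{assump:*} is satisfied is false: $2\mid N$ forces $2\mid ND$, violating the bullet $2\nmid ND$ (it is not "forced by $\gcd(N,D)=1$" -- the obstruction sits in $N$, not $D$). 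So Theorem~\ref{t8.3}, and Theorem~\ref{t6.6} behind it, cannot be invoked at all in this setting, not even to get the weaker bound $2^{-2-\omega(D)}$ you mention as the "naive" outcome.

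Second, your reason for deleting the prime $2$ from the exponent conflates two different roles that primes play in the proof of Theorem~\ref{t6.6}. The factor $2^{-\omega(ND/3^{v_3(ND)})}$ counts the primes $\ell\neq 3$ dividing the conductor of the twist at which the Heegner hypothesis imposes a condition on $\Delta_K$; the $\ell$-depletions and the Euler-factor nonvanishing (Proposition~\ref{p5.1}, Corollary~\ref{c5.1.1}) concern only primes with $v_\ell=1$ and do not enter that count. Thus $v_2(N)=2$ tells you that no depletion is needed at $2$, but it does not remove the splitting condition at $2$ demanded by the Heegner hypothesis (split at $2$ means $\Delta_K\equiv 1\pmod 8$, a nontrivial condition whose density among fundamental discriminants is not bounded below by $1/2$ the way the odd-prime conditions are), and it does not address the parity hypotheses in the Nakagawa--Horie/Byeon input \cite[Lemma~2.2]{byeon2004class}, which is precisely why the paper imposes $2\nmid ND$. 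To make your route rigorous you would have to re-prove Theorem~\ref{t6.6} allowing $4\mid N$, redoing both the discriminant count at $2$ and the class-number step; alternatively one follows the paper's proof verbatim, but then the value $N=27$ must be justified, and it is not the conductor of $\mathcal{C}_1$. As it stands, your proposal identifies the right issue but leaves the stated bound $2^{-1-\omega(D)}$ unproven.
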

\begin{proof}
By applying Theorem~\ref{t8.3} with $N=\text{cond}(\mathcal{C}_1)=27$, the result follows.
\end{proof}
Addressing the $D<0$ case, we have the analogous results.
\begin{theorem}
\label{t8.4}
For a fixed $c$ (and thus $N$) and $D<0$ satisfying Assumption~\ref{assump:*} and $3\nmid h_{\mathbb{Q}(\sqrt{D})}$, the proportion of imaginary quadratic fields $K$ such that $g_{6,D}(\mathcal{C}_c)/K$ has rank $1$ is at least $2^{-1-\omega\left(ND/3^{v_3(ND)}\right)}$.
\end{theorem}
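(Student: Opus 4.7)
The plan is to mirror the proof of Theorem~\ref{t8.3} essentially verbatim, replacing the invocation of Theorem~\ref{t6.6} by Theorem~\ref{t6.7}, since the sextic twist factorization $g_{6,D}=g_{2,D}\circ g_{3,D^2}$ and the commutativity of quadratic and cubic twists established before Proposition~\ref{p8.1} do not depend on the sign of $D$, and Proposition~\ref{p8.2} guarantees residual reducibility modulo $3$ for every member of the family $\mathcal{C}_c$.

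First I would observe, exactly as in Theorem~\ref{t8.3}, that $\text{cond}(g_{6,D}(\mathcal{C}_c))=ND^t$ for some nonnegative integer $t$, and that the primes dividing this conductor form a subset of those dividing $\text{cond}(g_{2,D}(\mathcal{C}_c))=ND^2$. Hence the further cubic twist by $D^2$ applied to $g_{2,D}(\mathcal{C}_c)$ introduces no new primes requiring $\ell$-depletion, and Corollary~\ref{c5.1.1} continues to ensure no Euler factor vanishes modulo $3$ under Assumption~\ref{assump:*} (the condition $\ell\equiv 2\pmod 3$ for $\ell>3$ with $v_\ell(N)=1$).

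Next I would invoke Theorem~\ref{t6.7}: under Assumption~\ref{assump:*} and $3\nmid h_{\mathbb{Q}(\sqrt{D})}$, the proportion of $\Delta_K$ that are quadratic residues modulo every prime $3\ne\ell\mid ND$ and satisfy $3\nmid h_{\mathbb{Q}(\sqrt{-3D\Delta_K})}$ is at least $2^{-1-\omega(ND/3^{v_3(ND)})}$. By the $D<0$ branch of Proposition~\ref{p6.5},
\[
S_{\mathbb{N}\chi_D}(E_2^{\chi_D,\chi_D})\equiv h_{\mathbb{Q}(\sqrt{D})}\,h_{\mathbb{Q}(\sqrt{-3D\Delta_K})}\pmod{3},
\]
so for every such $K$ this value is non-zero modulo $3$, and hence non-zero in $\mathbb{C}_p$.

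Finally I would apply Proposition~\ref{p6.4} (in the same way Theorem~\ref{t8.3} did for $D>0$) to conclude that for each such $K$ the quadratic twist $g_{2,D}(\mathcal{C}_c)/K$ has rank $1$, and then transfer this to the sextic twist via the cubic-twist step, just as in the proof of Theorem~\ref{t8.3}. The only potential obstacle is confirming that the passage from the quadratic twist to the sextic twist behaves identically for $D<0$ as for $D>0$; this is really a formal matter, since none of the arguments from \S\ref{subsec:cubic}--\S\ref{subsec:sextic} used positivity of $D$, and all the congruences and Heegner-point formulas (Proposition~\ref{p7.3}, \cite[Theorem~9.10]{krizsupersingular}) hold uniformly. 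Thus the bound $2^{-1-\omega(ND/3^{v_3(ND)})}$ carries over unchanged.
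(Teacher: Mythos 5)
Your proposal matches the paper's own proof of Theorem~\ref{t8.4} essentially verbatim: the same conductor comparison showing the cubic twist by $D^2$ introduces no new primes needing $\ell$-depletion, the same appeal to Theorem~\ref{t6.7} (in place of Theorem~\ref{t6.6}) for the $2^{-1-\omega(ND/3^{v_3(ND)})}$ bound, and the same concluding application of Proposition~\ref{p6.4}. Your added remarks invoking Proposition~\ref{p6.5} and the cubic-twist congruences of \S\ref{subsec:cubic} only make explicit steps the paper leaves implicit, so the argument is correct and the same in substance.
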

\begin{proof}
We have $\text{cond}(g_{6,D}(\mathcal{C}_c))=\pm ND^t$ for some nonnegative integer $t$. Since $\text{cond}(g_{2,D}(\mathcal{C}_c))=ND^2$, it follows that the set of primes dividing $\text{cond}(g_{6,D}(\mathcal{C}_c))$ is a subset of the set of primes dividing $\text{cond}(g_{2,D}(\mathcal{C}_c))$. As a result, the subsequent cubic twist by $D^2$ yields an elliptic curve whose conductor does not have any new primes dividing it (compared to $ND^2$), and therefore does not require any more $\ell$-depletions.

As a result, any $K$ which admits a quadratic twist of $\mathcal{C}_c$ by $D$ will also admit a cubic twist by $D^2$. This occurs when $\Delta_K$ is a quadratic residue modulo all primes $\ell|ND$, and by Theorem~\ref{t6.7}, occurs for at least $2^{-1-\omega\left(ND/3^{v_3(ND)}\right)}$ of the $\Delta_K$. Now applying Proposition~\ref{p6.4}, we conclude that every such $K$ also satisfies the property that $g_{6,D}(\mathcal{C}_c)/K$ has rank $1$.
\end{proof}
Once again specializing to $\mathcal{C}_1$, we have:
\begin{corollary}
For fixed $D<0$ with $3\nmid D$ and $3\nmid h_{\mathbb{Q}(\sqrt{D})}$, the proportion of imaginary quadratic fields $K$ such that $g_{6,D}(\mathcal{C}_1)/K$ has rank $1$ is at least $2^{-1-\omega(D)}$.
\end{corollary}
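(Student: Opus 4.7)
The plan is to specialize Theorem~\ref{t8.4} to the elliptic curve $\mathcal{C}_1 : y^2 = x^3 + 1$, completely parallel to how Corollary~\ref{c8.3.1} specializes Theorem~\ref{t8.3}. First I would verify that $\mathcal{C}_1$ satisfies the hypotheses implicit in Assumption~\ref{assump:*}. The conductor of $\mathcal{C}_1$ is $N = \text{cond}(\mathcal{C}_1) = 27 = 3^3$, so $v_3(N) = 3 \ne 1$; the curve has no prime $\ell > 3$ dividing $N$, so the first bullet of Assumption~\ref{assump:*} is vacuous; and $2 \nmid N$. Combined with the hypotheses $3 \nmid D$ and the implicit $2 \nmid D$, $\gcd(N,D)=1$ (which follows from $3 \nmid D$ since $N$ is a power of $3$), all conditions of Assumption~\ref{assump:*} are satisfied.

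Next I would compute the exponent appearing in the bound from Theorem~\ref{t8.4}. Since $N = 27 = 3^3$ and $3 \nmid D$, we have $v_3(ND) = 3$ and therefore
\[
\frac{ND}{3^{v_3(ND)}} = \frac{27 D}{27} = D,
\]
so $\omega(ND/3^{v_3(ND)}) = \omega(D)$. The hypothesis $3 \nmid h_{\mathbb{Q}(\sqrt{D})}$ is exactly the hypothesis of Theorem~\ref{t8.4} in the $D < 0$ case.

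Applying Theorem~\ref{t8.4} directly yields that the proportion of imaginary quadratic fields $K$ such that $g_{6,D}(\mathcal{C}_1)/K$ has rank $1$ is at least $2^{-1-\omega(D)}$, which is the desired conclusion. There is no main obstacle here: the result is a direct numerical specialization. The only thing to be careful about is confirming that the parity and coprimality conditions in Assumption~\ref{assump:*} do not force extra hypotheses on $D$ beyond $3 \nmid D$ (implicitly one also needs $2 \nmid D$, matching the situation of Corollary~\ref{c8.3.1}), and that $v_3(27) = 3 \ne 1$ as required.
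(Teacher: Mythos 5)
Your proposal is correct and follows exactly the paper's route: the paper also proves this corollary by applying Theorem~\ref{t8.4} with $N=\text{cond}(\mathcal{C}_1)=27$, so that $\omega\left(ND/3^{v_3(ND)}\right)=\omega(D)$. Your additional verification of the conditions in Assumption~\ref{assump:*} (including the implicit $2\nmid D$) is just a more explicit write-up of the same specialization.
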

\begin{proof}
By applying Theorem~\ref{t8.4} with $N=\text{cond}(\mathcal{C}_1)=27$, the result follows.
\end{proof}

\section{Ranks of twists over $\mathbb{Q}$}
\label{sec:ranksoverQ}
For some suitable elliptic curve $E/\mathbb{Q}$, we have discussed the proportion of imaginary quadratic fields $K$ with $E_D/K$ yielding elliptic curves of either rank $1$ or rank $0$. We will now consider the ranks over $\mathbb{Q}$ instead.

We will need the concept of a root number. The \textbf{root number} $w_{E/K}$ of an elliptic curve $E/K$ is the value $w_{E/K}\in \{-1,1\}$ such that $L_{E/K}(s)=w_{E/K}L_{E/K}( 2-s)$.

\begin{theorem}
\label{t9.1}
Fix $E$ with $N=\text{cond}(E)$ and $D$ satisfying $\gcd(N,D)=\gcd(N,6)=1$. Then $E_D/\mathbb{Q}$ has rank $1$ for at least $\frac{\phi(N)}{4N}$ of all such $D$, and rank $0$ for at least $\frac{\phi(N)}{4N}$ of all such $D$.
\end{theorem}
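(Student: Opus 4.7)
The strategy is to descend the rank-$1$ conclusion for $E_D/K$ (given by Theorems~\ref{t6.6} and~\ref{t6.7} and their corollaries) to a rank statement for $E_D/\mathbb{Q}$ via the $L$-function factorization
\[
L(E_D/K,s) = L(E_D/\mathbb{Q},s)\cdot L(E_{D\Delta_K}/\mathbb{Q},s),
\]
which forces analytic ranks to add and root numbers to multiply. Whenever $E_D/K$ has rank $1$, the Gross--Zagier--Kolyvagin equality of algebraic and analytic rank in the rank-$\leq 1$ regime forces the ranks of $E_D/\mathbb{Q}$ and $E_{D\Delta_K}/\mathbb{Q}$ to sum to $1$, while $w(E_D/\mathbb{Q})\cdot w(E_{D\Delta_K}/\mathbb{Q}) = w(E_D/K) = -1$ pinpoints which is rank $1$ and which is rank $0$: the factor with root number $-1$ has rank $1$, and the factor with root number $+1$ has rank $0$.

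The second step is to compute $w(E_D/\mathbb{Q})$ explicitly. For $D$ coprime to $6N$, the standard formula for the root number of a quadratic twist expresses $w(E_D/\mathbb{Q})$ as $w(E/\mathbb{Q})$ times a product of local Kronecker-type factors indexed by the bad primes of $E$; the conditions in Assumption~\ref{assump:*} (in particular the congruence $\ell \equiv 2 \pmod{3}$ at primes $\ell$ with $v_\ell(N)=1$) make this product a nontrivial quadratic character of $D$ modulo $N$. Hence, within the density-$\phi(N)/N$ set of $D$ satisfying $\gcd(N,D) = \gcd(N,6) = 1$, the two values $w(E_D/\mathbb{Q}) = \pm 1$ occur on disjoint subsets of density $\phi(N)/(2N)$ each.

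The final step is to produce, within each root-number class, a density-$\tfrac12$ subset of $D$ for which the auxiliary $K$ required by Theorem~\ref{t6.6} (or~\ref{t6.7}) can be arranged. A useful observation is that since $\Delta_K$ is a square in $K$, one has $E_{D\Delta_K}/K \cong E_D/K$, so the $3$-indivisibility class-number hypothesis can be checked on either $D$ or its companion $D\Delta_K$; this doubles the supply of usable $D$ and, combined with the Heegner-hypothesis step from Theorems~\ref{t6.6}--\ref{t6.7}, feeds into the preceding rank-plus-root-number argument to yield density $\phi(N)/(4N)$ of $D$ with $E_D/\mathbb{Q}$ of rank $1$ and similarly for rank $0$.

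The main technical obstacle is the bookkeeping in this last step: arranging that Assumption~\ref{assump:*}, the Heegner hypothesis for $E_D$, the prescribed sign of $w(E_D/\mathbb{Q})$, and the $3$-indivisibility of the relevant class number ($h_{\mathbb{Q}(\sqrt{-3D})}$ or $h_{\mathbb{Q}(\sqrt{D})}$ and their $D\Delta_K$-analogues) are simultaneously satisfied for a positive proportion of $D$ in each root-number class, without losing more than a factor of $\tfrac12$ beyond the root-number split. The $(D, D\Delta_K)$ pair symmetry, combined with the equidistribution of Kronecker characters, is the mechanism by which these constraints can be shown compatible on a set of density at least $\phi(N)/(4N)$.
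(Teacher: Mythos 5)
Your first two steps track the paper's argument: the paper likewise descends from rank $1$ over $K$ to the rank of $E_D/\mathbb{Q}$ via the root number (using that a non-torsion Heegner point forces the $\mathbb{Q}$-rank to be $\frac{1-w_{E_D/\mathbb{Q}}}{2}$, which is your $L$-function factorization argument in different clothing), and it likewise computes $w_{E_D/\mathbb{Q}}=\left(\frac{D}{-N}\right)w_{E/\mathbb{Q}}$, so that each sign of the root number occurs on a union of residue classes mod $N$ of total density $\frac{\phi(N)}{2N}$. Up to there the proposal is essentially the paper's proof.

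The genuine gap is in your final step. The extra factor of $\tfrac12$ in the paper does not come from any $(D,D\Delta_K)$ pair symmetry or from equidistribution of Kronecker characters; it comes from the Nakagawa--Horie/Byeon density theorem (\cite[Lemma~2.2]{byeon2004class}), which asserts that within \emph{each fixed congruence class} $D\equiv m\pmod N$ at least half of the discriminants satisfy $3\nmid h_{\mathbb{Q}(\sqrt{-3D})}$ (for $D>0$), respectively $3\nmid h_{\mathbb{Q}(\sqrt{D})}$ (for $D<0$). This is a deep Davenport--Heilbronn-type input with congruence refinements, and nothing in your sketch supplies it: the observation that $E_{D\Delta_K}/K\cong E_D/K$ is true but only lets you transfer a conclusion from the twist by $D$ to the twist by $D\Delta_K$; it gives no control on the density of $D$ for which the class-number hypothesis $3\nmid h_{\mathbb{Q}(\sqrt{-3D})}$ (or $3\nmid h_{\mathbb{Q}(\sqrt{D})}$) holds, and in particular does not show this set meets each root-number class in density at least $\tfrac12$. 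Without citing the Nakagawa--Horie/Byeon result (or proving an equivalent statement), the claimed bound $\frac{\phi(N)}{4N}$ is unsupported --- which is exactly the step you flag as ``bookkeeping'' but which is in fact the main arithmetic input beyond the root-number computation.
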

\begin{proof}
By \cite{kolyvagin1989finiteness}, Heegner points in $E/K$ exist iff $w_{E/K}=-1$. Furthermore, if $E/K$ has rank $1$, then $E/\mathbb{Q}$ has rank $\frac{1-w_{E/\mathbb{Q}}}{2}$, and $w_{E_D/\mathbb{Q}}=\left(\frac{D}{-N}\right)w_{E/\mathbb{Q}}$. It follows that if $E_D/K$ has rank $1$, then $E_D/\mathbb{Q}$ has rank $1$ if $w_{E/\mathbb{Q}}=-1$.

For $D>0$, Corollary~\ref{c6.6.1} shows that when $3\nmid h_{\mathbb{Q}(\sqrt{-3D})}$, there exists some imaginary quadratic field $K$ (in fact, a positive density) such that $E_D/K$ has rank $1$, and thus it suffices to check when $w_{E/\mathbb{Q}}=-1$. Since $w_{E/\mathbb{Q}}$ is fixed, we check the proportion of $D>0$ such that $\left(\frac{D}{-N}\right)=\pm 1$ in each case. We have $\left(\frac{D}{-N}\right)=\left(\frac{D}{N}\right)$ which depends only on the residue of $D$ modulo $N$. There are exactly $\phi(N)/2$ quadratic residues and quadratic nonresidues, and thus the proportion of $D$ (assuming $3\nmid h_{\mathbb{Q}(\sqrt{-3D})}$) is exactly $\frac{\phi(N)}{2N}$. Now \cite[Lemma~2.2]{byeon2004class} implies that for every $m$ such that $D\equiv m\pmod{N}\implies \left(\frac{D}{-N}\right)=-w_{E/\mathbb{Q}}$, then the proportion $$S_-(X,-3m,N)\coloneqq \frac{\left|D>0\hspace{1mm}|\hspace{1mm} -3D\equiv -3m\pmod{N}, \hspace{1mm} h_{\mathbb{Q}(\sqrt{-3D})}\not\equiv 0\pmod{3}\right|}{\left|D>0\hspace{1mm}|\hspace{1mm} -3D\equiv -3m\pmod{N}\right|}\ge \frac{1}{2}.$$ Since this holds for every such $m$, it follows that the proportion of such $D$ satisfying $\left(\frac{D}{-N}\right)=-w_{E/\mathbb{Q}}$ is at least $\frac{1}{2}$.

For $D<0$, Corollary~\ref{c6.6.2} shows that when $3\nmid h_{\mathbb{Q}(\sqrt{D})}$, there exists some imaginary quadratic field $K$ (in fact, a positive density) such that $E_D/K$ has rank $1$, and thus it suffices to check when $w_{E/\mathbb{Q}}=-1$. Since $w_{E/\mathbb{Q}}$ is fixed, we check the proportion of $D<0$ such that $\left(\frac{D}{-N}\right)=\pm 1$ in each case. We have $\left(\frac{D}{-N}\right)=\left(\frac{D}{N}\right)\left(\frac{D}{-1}\right)=-\left(\frac{D}{N}\right)$ which depends only on the residue of $D$ modulo $N$.  There are exactly $\phi(N)/2$ quadratic residues and quadratic nonresidues, and thus the proportion of $D$ (assuming $3\nmid h_{\mathbb{Q}(\sqrt{D})}$) is exactly $\frac{\phi(N)}{2N}$. Now \cite[Lemma~2.2]{byeon2004class} implies that for every $m$ such that $D\equiv m\pmod{N}\implies \left(\frac{D}{-N}\right)=-w_{E/\mathbb{Q}}$, then the proportion $$S_-(X,m,N)\coloneqq \frac{\left|D<0\hspace{1mm}|\hspace{1mm} D\equiv m\pmod{N}, \hspace{1mm} h_{\mathbb{Q}(\sqrt{D})}\not\equiv 0\pmod{3}\right|}{\left|D<0\hspace{1mm}|\hspace{1mm} D\equiv m\pmod{N}\right|}\ge \frac{1}{2}.$$ Since this holds for every such $m$, it follows that the proportion of such $D$ satisfying $\left(\frac{D}{-N}\right)=-w_{E/\mathbb{Q}}$ is at least $\frac{1}{2}$.

We conclude that in either case, the proportion of $D$ with $E_D/\mathbb{Q}$ having rank $1$ is at least $\frac{\phi(N)}{2N}\cdot \frac{1}{2}=\frac{\phi(N)}{4N}$. Analogously, when $\left(\frac{D}{-N}\right)=w_{E/\mathbb{Q}}$, we find that $E_D/\mathbb{Q}$ has rank $0$, and the same result holds.
\end{proof}

Noting that the assumptions hold for a fixed (positive) proportion of $D$, we conclude that for elliptic curves satisfying the above assumptions, Conjecture~\ref{weakgoldfeld} holds.
\begin{corollary}
For such $E$ satisfying the assumptions of Theorem~\ref{t9.1}, the weak Goldfeld conjecture holds.
\end{corollary}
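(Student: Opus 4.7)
The plan is to derive the weak Goldfeld conjecture almost immediately from Theorem~\ref{t9.1}, by exhibiting the hypotheses of that theorem on a set of twisting parameters $D$ of positive density among all integers (equivalently, among all fundamental discriminants).

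First, I would identify the set $\mathcal{A}$ of $D$ that simultaneously satisfy Assumption~\ref{assump:*} and the relevant class-number nondivisibility condition ($3\nmid h_{\mathbb{Q}(\sqrt{-3D})}$ when $D>0$, and $3\nmid h_{\mathbb{Q}(\sqrt{D})}$ when $D<0$). The congruence conditions in Assumption~\ref{assump:*}, namely $\gcd(N,D)=1$ and $D$ odd, cut out a subset of $\mathbb{Z}$ of natural density $\phi(N)/(2N)$, which is positive because $\gcd(N,6)=1$. Within each admissible residue class modulo $N$, the Nakagawa--Horie type result of \cite[Lemma~2.2]{byeon2004class} (already invoked in Theorem~\ref{t9.1}) guarantees that at least half of the $D$ in that class satisfy the class-number condition. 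Taking a union over admissible residues shows that $\mathcal{A}$ has natural density at least $\phi(N)/(4N)>0$.

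Next, I would feed this into Theorem~\ref{t9.1}. That theorem gives, \emph{within} the subset $\mathcal{A}$, a proportion of at least $\phi(N)/(4N)$ of $D$ for which $E_D/\mathbb{Q}$ has rank $0$, and likewise a proportion at least $\phi(N)/(4N)$ for which $E_D/\mathbb{Q}$ has rank $1$. Combining these with the lower bound for the density of $\mathcal{A}$, the absolute density (over all fundamental discriminants $D$) of quadratic twists of rank $0$ is at least $\phi(N)^2/(16N^2)>0$, and similarly for rank $1$. This is exactly the content of Conjecture~\ref{weakgoldfeld}.

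The only real subtlety is bookkeeping between the two notions of density: Theorem~\ref{t9.1} speaks of proportions \emph{inside} the set satisfying Assumption~\ref{assump:*} plus the class-number condition, whereas Conjecture~\ref{weakgoldfeld} demands a positive proportion among all fundamental discriminants. Since $\mathcal{A}$ is defined by a finite collection of congruence conditions together with a density-$\geq 1/2$ condition in each such residue class, its natural density exists and is positive, so multiplying the two bounds is legitimate. No analytic input beyond Theorem~\ref{t9.1} and the Nakagawa--Horie estimate already cited is required.
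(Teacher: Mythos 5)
Your argument is correct and is essentially the paper's own (very brief) justification: the assumptions of Theorem~\ref{t9.1} hold for a positive proportion of $D$, so its lower bounds immediately yield positive proportions of rank $0$ and rank $1$ quadratic twists, which is exactly Conjecture~\ref{weakgoldfeld}. The only remark is that your final bound $\phi(N)^2/(16N^2)$ double-counts the class-number condition (it already enters the $\phi(N)/(4N)$ of Theorem~\ref{t9.1}), so it is needlessly conservative, but since only positivity is required this does not affect the conclusion.
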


\bibliographystyle{alpha}
\bibliography{arxiv}

\end{document}